\newtheorem{theorem}{Theorem}[section]
\newtheorem{lemma}[theorem]{Lemma}
\newtheorem{proposition}[theorem]{Proposition}
\newtheorem{corollary}[theorem]{Corollary}
\theoremstyle{remark}
\newtheorem{remark}[theorem]{Remark}
\theoremstyle{definition}
\numberwithin{equation}{section}
\newcommand{\R}{\ensuremath{\mathbb{R}}}
\newcommand{\N}{\ensuremath{\mathbb{N}}}
\newcommand{\Z}{\ensuremath{\mathbb{Z}}}
\newcommand{\veps}{\varepsilon}
\begin{document}

\title[Fujita exponent for discrete blow-up problems]{The Fujita exponent for finite difference approximations of nonlocal and local semilinear blow-up problems}

\author[F.~del~Teso]{F\'elix del Teso}

\address[F. del Teso]{Departamento de Matematicas, Universidad Aut\'onoma de Madrid (UAM), Campus de Cantoblanco, 28049 Madrid, Spain}

\email[]{felix.delteso\@@{}uam.es}

\urladdr{https://sites.google.com/view/felixdelteso}

\keywords{numerical blow-up, Fujita exponent, mixed local and nonlocal operators, finite difference spatial discretization, adaptive time-stepping.
}

\author[R. Ferreira]{Ra\'ul Ferreira}

\address[R. Ferreira]{Departamento de An\'alisis Matem\'atico y Matem\'atica Aplicada,                Universidad Complutense de Madrid, 28040,  Madrid, Spain.}
\email[]{raul\_ferreira@mat.ucm.es}
\urladdr{}


\subjclass[2020]{
35B44,   
35R09, 
65M06. 
 }

\begin{abstract}\noindent
We study monotone finite difference approximations for a broad class of reaction-diffusion problems, incorporating general symmetric L\'evy operators. By employing an adaptive time-stepping discretization, we derive the discrete Fujita critical exponent for these problems. Additionally, under general consistency assumptions, we establish the convergence of discrete blow-up times to their continuous counterparts. As complementary results, we also present the asymptotic-in-time behavior of discrete heat-type equations as well as an extensive analysis of discrete eigenvalue problems.
\end{abstract}

\maketitle



\section{Introduction and main results} \label{sec:intro}
The aim of this paper is to consider monotone finite difference approximations of nonnegative and nontrivial solutions for a large class of blow-up problems of the form
\begin{equation}\label{1.1}
\left\{
\begin{array}{ll}
w_t(x,t)=L w(x,t)+w^p(x,t), \qquad & x\in \mathbb R^N,\, t>0,\\
w(x,0)=\varphi, & x\in\mathbb R^N,
\end{array}\right.
\end{equation}
where the operator $L$ can be local, nonlocal, or both. More precisely, for a sufficiently smooth function $\phi:\R^N\to \R$, we define
\begin{equation}\label{eq:LevyOperator}
L \phi(x) =a \Delta \phi(x) + \textup{P.V.} \int_{|y|>0} (\phi(x+y)-\phi(x))d\mu(y),
\end{equation}
where $a\in \R_+$ and $\mu$ is a nonnegative symmetric Radon measure satisfying $\int_{|z|>0}  \min\{|z|^2, 1\}d\mu(z)<+\infty$.
In this work, we will only consider $p>1$ in order to have solutions with blow-up, that is, there exists a finite time $T$ such that
$$
\lim_{t\to T}\|w(\cdot,t)\|_{L^\infty(\R^d)} =\infty.
$$

The first study of the blow-up phenomenon was presented by Fujita in \cite{Fujita}, where the semilinear heat equation ($L = \Delta$) is analyzed. He proved that, for $1 < p < 1 + \frac{2}{N}$, every nontrivial nonnegative solution blows up in finite time, while, for $p > 1 + \frac{2}{N}$, there are both blow-up and global solutions. The values $p_0 = 1$ and $p_f = 1 + \frac{2}{N}$ are called the critical global exponent and the critical Fujita exponent, respectively. In the borderline case $p = p_f$, every positive solution blows up; see, for instance, \cite{Weiss}. For a general reference on blow-up in semilinear local problems, see \cite{QS}. For the study of the Fujita exponent in the nonlocal diffusion framework, we refer to the works \cite{Quiros-Jorge,Alfaro} for the case $d\mu(y) = J(y)dy$ with $J \in L^1(\mathbb{R}^N)$, and to \cite{NagasawaSirao69} for the fractional Laplacian operator, where $J(y) = C_{N,s} |y|^{-N-2s}$ with $s\in(0,1)$.

Numerical methods for local blow-up problems in a \emph{bounded domain} have also been extensively investigated both from theoretical and computational points of view. For example, in \cite{ADR,G,N}, the authors study fully discrete finite difference schemes. On the other hand, in \cite{ALM,GR,BB}, the authors consider a semi-discrete approximation in space (keeping $t$ continuous). In \cite{Budd}, an approximation based on the scaling of the equation is introduced.  However, for a numerical approximation of the blow-up problem with nonlocal diffusion, we have found only the unpublished article \cite{Kha} where the diffusion is given by the fractional laplacian, $L=-(-\Delta)^s$.

In the present present work, we consider always the \emph{unbounded domain} $\mathbb R^N$ and
we are interested in the analytical study of the finite difference approximation of problem \eqref{1.1}. To the best of our knowledge there are no theoretical studies (previous to the present work) of this type of problems neither in local or the nonlocal case. However, this kind of problems has been previously investigated from a computational point of view. In this case they need to restrict the problem to an artificial bounded ``computational" domain, and this requires the construction of an appropriate artificial boundary condition, see \cite{HW} for an introduction on artificial boundary methods and \cite{ZHH, QL} for the study of blow-up problem \eqref{1.1} with $L=\Delta$ in dimension $N=1,2$.

The \emph{main goals of this paper} are, firstly, to prove that a Fujita exponent exists for monotone finite difference approximations of \eqref{1.1} with adaptive time-stepping; and secondly, to show that, when blow-up occurs, the blow-up time of the numerical approximation converges to the blow-up time of the solution to \eqref{1.1}.

\textbf{Organization of the paper.} In Section \ref{sec:discprob}, we formulate the numerical scheme and present the fundamental assumptions regarding the discretizations. Sections \ref{sec:mainblowup} and \ref{sec:main-butimes} present the main results of the paper. Section \ref{sec:eigpair} investigates the eigenvalue problem for the discretization of the diffusion operator $L$. The asymptotic behavior of explicit finite difference approximations for heat-type equations with a fixed time step is discussed in Section \ref{comportamiento-asintotico-calor}. In Section \ref{bum-discreto}, we establish various conditions under which blow-up solutions of \eqref{eq-discreta} can be obtained. The Fujita exponent is derived in Section \ref{sec-fujita}. Under certain consistency assumptions on the diffusion operator, Section \ref{sec:convprop} proves the convergence of solutions (before the blow-up time) and convergence of the discrete to the continuous blow-up time. Finally, in Section~\ref{sec:numexp}, we present numerical experiments to validate and support our theoretical results.

\subsection{The discrete problem}\label{sec:discprob} Let us introduce now an explicit fully discrete approximation of problem \eqref{1.1}. Let $h>0$ be the spatial  discretization parameter and define $x_\alpha:=h \alpha \in h\Z^N$ with $\alpha \in \Z^N$. In order to capture the blow up phenomena with an explicit scheme, it is necessary to use a nonuniform time step. For now, let us consider a sequence of positive numbers  $\{\tau_j\}_{j=0}^\infty$ and define $t_0=0$ and $t_{j+1}=t_j + \tau_j$. We consider the following discrete blow-up problem,

\begin{equation}\label{eq-discreta}
\left\{
\begin{array}{ll}
\partial_{\tau_j} u(x_\alpha,t_j)=L_h u(x_\alpha,t_j)+u^p(x_\alpha,t_j),\quad & x_\alpha\in h\mathbb Z^N,\, t_j\in(0,T), \\
u(x_\alpha,0)=\varphi(x_\alpha), & x_\alpha\in h\mathbb Z^N,
\end{array}\right.
\end{equation}
where $\partial_{\tau_j}$ is the forward finite difference
\[
\partial_{\tau_j}u(x_\alpha,t_j)=\frac{u(x_\alpha,t_{j+1})-u(x_\alpha,t_j)}{\tau_j},
\]
and $L_h$ is the discrete diffusion operator
\begin{equation*}
L_h u(x_\alpha,t_j)= \sum_{\beta\in \Z^N\setminus\{0\}} \left(u(x_\alpha+x_\beta,t_j)-u(x_\alpha,t_j)\right) \omega(\beta,h)
\end{equation*}
with $\omega(\cdot,h)$ being a family of weights parameterized by $h$. The basic assumption on the weights is
\begin{equation}\label{as:omega1}\tag{\textup{A1}$_\omega$}
\textup{For every $h>0$,  $\omega(\alpha,h)=\omega(-\alpha,h)\geq0$ and $0<\|\omega(\cdot,h)\|_{\ell^1(\Z^N\setminus\{0\})}:= \sum_{\alpha \in \Z^N\setminus\{0\}} \omega(\alpha,h)<+\infty$.}
\end{equation}
\begin{remark}Note that, without loss of generality, we can always assume that $w(0,h)=0$ since this term will never be present in $L_h$.
\end{remark}
Moreover, we will also assume positivity of the first weights in the coordinate axis. More precisely, we will consider weights satisfying
\begin{equation}\label{as:omega2}\tag{\textup{A2}$_\omega$}
\textup{For every $h>0$, $\omega(e_i,h)>0$ for all $i=1,\ldots,N$.}
\end{equation}
Here $e_i\in \Z^N$ denotes the unit vector with value 1 at its coordinate $i$ and 0 in any other coordinate.  Finally, we will sometimes use the following assumption \begin{equation}\label{as:omega3}\tag{\textup{A3}$_\omega$}
\textup{For every $h>0$, $\omega(\cdot,h)$ is radially nonincreasing in $\Z^N\setminus\{0\}$.}
\end{equation}

\begin{remark}
We have chosen to work with an explicit scheme in \eqref{eq-discreta}, which, as will be shown later, requires a standard CFL-type stability condition that constrains the time discretization parameter in terms of the spatial one. Most of the results in this paper also hold for implicit schemes, which are usually not subject to such a condition. However, in order to accurately capture the blow-up behaviour of the solution in the context of blow-up problems such as the one considered here, the use of an adaptive time-stepping strategy becomes necessary even for implicit methods.  This adaptive constraint becomes more restrictive than the classical CFL condition after a few iterations, thereby eliminating any significant advantage of using implicit schemes. 
\end{remark}

Our running assumptions on the decay/growth of the weights are either
\begin{equation}\label{as:weight4}\tag{$\textup{A4}_\omega$}
   C_1 h^N |x_\alpha|^{-N-2s}\le \omega(\alpha,h)\leq C_2 h^N |x_\alpha|^{-N-2s} ,\quad \textup{for} \quad  |x_\alpha| \quad \textup{large enough,}
\end{equation}
or
\begin{equation}\label{as:weight5}\tag{$\textup{A5}_\omega$}
 M_2(h):= \sum_{\alpha\in \Z^N} \omega (\alpha,h)|x_\alpha|^2<+\infty.
\end{equation}

\begin{remark}\label{ejemplos}
We emphasize that the general assumptions on the weights introduced earlier are satisfied by many natural discretizations of both local and nonlocal operators.
\begin{enumerate}[$\bullet$]
\item \textbf{The Laplacian.} The classical central difference approximation of the Laplacian yields the weights $\omega(\alpha,h)=\frac{1}{h^2}$ for $|\alpha|=1$, and $\omega(\alpha,h)=0$ otherwise. In this case, assumptions \eqref{as:omega1}, \eqref{as:omega2}, \eqref{as:omega3}, and \eqref{as:weight5} are trivially satisfied.

\item \textbf{Nonlocal Lévy-type operators.} Consider the case $a=0$ and $d\mu(y)=J(y)\,dy$ in \eqref{eq:LevyOperator}, where $J: \R^N\setminus\{0\} \to \R^+$ is continuous on $\R^N \setminus \{0\}$, nonnegative, radially nonincreasing, and satisfies $J > 0$ in $B_\varepsilon(0)$ for some $\varepsilon > 0$. A finite difference discretization of $L$ can then be obtained (see \cite{dTEnJa18}) by choosing the weights
\[
\omega(\alpha,h)=\left\{
\begin{array}{ll}
h^N J(y_\alpha), &\text{if } |\alpha| \ne 0,\\
0, &\text{if } |\alpha| = 0.
\end{array}
\right.
\]
Clearly, \eqref{as:omega3} holds without further conditions. Moreover, if $h < \varepsilon$, then \eqref{as:omega2} is also satisfied. If, in addition, $J$ satisfies
\[
C_1 |x|^{-N-2s} \le J(x) \le C_2 |x|^{-N-2s},
\]
then \eqref{as:weight4} holds. On the other hand, if $\int_{\mathbb R^N} |x|^2 J(x) \,dx < \infty$, then \eqref{as:weight5} holds. Finally, since $J$ is radially nonincreasing, we have
\[
\sum_{\alpha \in \Z^N\setminus\{0\}} \omega(\alpha,h)=\sum_{\alpha\in \Z^N\setminus\{0\}} h^N J(y_\alpha)\le   C \left(h^N J(e_1)  + \int_{\mathbb R^N\setminus B_h(0)} J(y) \,dy\right) <\infty,
\]
and hence \eqref{as:omega1} is also satisfied.

Several prototypical Lévy operators satisfy these assumptions:
\begin{enumerate}[\rm (a)]
    \item \emph{The fractional Laplacian $-(-\Delta)^s$ for $s\in(0,1)$}. In this case, $J(y) = c_{N,s} |y|^{-N-2s}$ for some constant $c_{N,s}>0$.
    
    \item \emph{$s$-stable operators for $s\in(0,1)$}. Here $J$ is continuous in $\R^N \setminus \{0\}$, radially nonincreasing, and satisfies $c_1 |y|^{-N-2s} \le J(y) \le c_2 |y|^{-N-2s}$ for some positive constants $c_1$, $c_2$.
    
    \item \emph{Zero-order operators}. These correspond to $J \in C(\mathbb R^N) \cap L^1(\mathbb R^N)$, radially nonincreasing, and such that $J > 0$ in $B_\varepsilon(0)$. Note that $J$ can be compactly supported.
    
    \item \emph{The relativistic Schrödinger operator $-(I - \Delta)^s + I$ for $s\in(0,1)$}. This operator has a singular integral representation with $J(x) \sim |x|^{-N-2s}$ as $|x| \to 0$ and exponential decay as $|x| \to \infty$ (see e.g. \cite{FallFelli2015,Roncal2023}). In this case, the condition $\int_{\R^N} |y|^2 J(y) \,dy < \infty$ is satisfied. 

    \item \emph{Discrete operators: } Operators of the form of \eqref{eq:LevyOperator} with $\mu$ being a linear combination of Dirac deltas. For example, $d\mu(z)=d\delta_1(z)+d\delta_{1}(z)$ yields $Lu(x)=\phi(x+1)+\phi(x-1)-2\phi(x)$. Other nonlocal discrete operators can be obtained in a similar way.
\end{enumerate}

\item \textbf{Mixed local-nonlocal operators.} One can also consider combinations of the form $L = b_1 L_1 + b_2 L_2$, where $b_1, b_2 > 0$, and $L_1, L_2$ are any of the operators mentioned above. In this case we can choose the weights for $L$ as the sum of the two corresponding weights for $L_1$ and $L_2$.
\end{enumerate}
\end{remark}

In order to ensure good properties for the solution of \eqref{eq-discreta} we need a standard CFL-type stability condition coupled with a blow-up correction. More precisely, we will consider the following time-stepping discretization:
\begin{equation}\label{CFL1}\tag{CFL}
\textup{Let} \,\,\, \tau>0 \,\,\, \textup{such that} \quad  \tau \leq\frac{1}{4\sum_{\alpha \in \Z^N\setminus\{0\}} \omega(\alpha,h)}, \quad \textup{and define} \quad \tau_j:= \tau \min\{1, \|u(\cdot,t_j)\|_{\ell^\infty(h\Z^N)}^{1-p}\}.
\end{equation}
The choice of $\tau$ is the standard one that ensures comparison principle (see e.g. \cite{dTEnJa19}), while the one for $\tau_j$ is based on the adaptive time step of the associated ODE problem (see Appendix \ref{sec:choicetau} for details).

We refer to \cite{G,N} for different choices of time steps for approximations of the semilinear heat equation in a bounded domain.

\subsection{Blow-up results for the discrete problem}\label{sec:mainblowup}
The first main objective of this work is to study the blow-up Fujita exponent for  problem \eqref{eq-discreta}. We say that a solution \emph{blows up} if there exists a time $T_{h,\tau}:=\sum_{j=0}^\infty \tau_j<\infty$ such that
$$
t_j \to T_{h,\tau} \quad\mbox{and}\quad \|u(\cdot,t_j)\|_\infty\to \infty \quad\mbox{as}\quad j\to\infty.
$$
Otherwise, we say that the solution is \emph{global in time}.

For some reaction-diffusion problems, an heuristic argument to obtain the Fujita exponent is given by equating the decay exponent of the pure diffusion equation with the ODE blow-up exponent, see \cite{DengLevine00}. In the semilinear heat equation, the diffusion decay rate is given by the decay of the Gauss kernel, $\rho_d= N/2$, while the solutions of the corresponding ODE ($y'(t)=y(t)^p$) are  given by
\begin{align*}
 y(t)= (y(0)^{1-p}-(p-1) t)^{-\rho_r}, \quad \textup{where} \quad \rho_r=1/(p-1)
\end{align*}
Thus $\rho_d=\rho_r$ gives $p_f=1+2/N$.
The same can be applied for the diffusion nonlocal problem. For instance, the fractional Gauss kernel decays like $t^{-N/(2s)}$ and the corresponding ODE  exponent $\rho_r=1/(p-1)$ produces $p_f=1+2s/N$.

We will show that this heuristic argument also works for problem \eqref{eq-discreta}. For the discrete ODE, as we show in Appendix \ref{sec:choicetau}, we have that $\rho_r=1/(p-1)$, so we need to know the exponent decay of the linear problem
\begin{equation}\label{eq:discheatintro}
\left\{
\begin{array}{ll}
\partial_{\tau} z(x_i,t_j)=L_h z(x_i,t_j), \quad & x_i\in h\mathbb Z^N,\\
z(x_i,0)=\varphi(x_i).
\end{array}\right.
\end{equation}
Observe that the time step for the pure diffusion problem \eqref{eq:discheatintro} is constant.

\begin{theorem}\label{comportamiento lineal}
Assume \eqref{as:omega1},
$\tau \leq(4\sum_{\alpha \in \Z^N\setminus\{0\}}\omega(\alpha,h))^{-1}$ and either \eqref{as:weight4} and $s\in(0,1)$ or \eqref{as:weight5} and $s=1$.
Let $z$ be the solution of \eqref{eq:discheatintro} with initial datum $\varphi\in \ell^1(h\Z^N)$.
Then,
\begin{equation}\label{eq:asbehaintro}
    \limsup_{j\to\infty} \,t_j^{\frac{N}{2s}} \sup_{\alpha \in \Z^N}|z(x_\alpha,t_j)-\|\varphi\|_{\ell^1(h\Z^N)}\Gamma_s(x_\alpha,t_j)|\le C,
\end{equation}
where $\Gamma_s$ is the fundamental solution of the fractional heat equation (the heat equation if $s=1$)
$
(\Gamma_s)_t=-K (-\Delta)^{s} \Gamma_s,
$
for some constant $K>0$ (see Section \ref{sec:symfour} for details). In particular,
\[
\|z(\cdot,t_j)\|_{\ell^\infty(h\Z^N)}\leq \tilde{C} t^{-\frac{N}{2s}}.
\]
\end{theorem}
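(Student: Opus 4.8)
The plan is to pass to the semidiscrete Fourier transform on the lattice $h\Z^N$. For $v:h\Z^N\to\R$ with $v\in\ell^1$, set $\mathcal F_h v(\xi)=h^N\sum_{\alpha\in\Z^N}v(x_\alpha)e^{-ix_\alpha\cdot\xi}$ for $\xi$ in the cell $Q_h:=[-\pi/h,\pi/h]^N$, with inversion $v(x_\alpha)=(2\pi)^{-N}\int_{Q_h}\mathcal F_hv(\xi)e^{ix_\alpha\cdot\xi}\,d\xi$. Since $L_h$ is a symmetric convolution on the lattice, it acts as multiplication by the real, nonpositive symbol $m_h(\xi)=\sum_{\beta\neq0}(\cos(x_\beta\cdot\xi)-1)\omega(\beta,h)$, so iterating \eqref{eq:discheatintro} gives $\mathcal F_h z(\xi,t_j)=(1+\tau m_h(\xi))^j\,\mathcal F_h\varphi(\xi)$. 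Assumption \eqref{as:omega1} and $\tau\le(4\|\omega(\cdot,h)\|_{\ell^1})^{-1}$ yield $|m_h|\le2\|\omega\|_{\ell^1}$ and hence $1+\tau m_h(\xi)\in[\tfrac12,1]$; combined with $1+x\le e^x$ this gives the pointwise control $0\le(1+\tau m_h(\xi))^j\le e^{t_j m_h(\xi)}$. I also record that $\mathcal F_h\varphi(0)=h^N\sum_\alpha\varphi(x_\alpha)=\|\varphi\|_{\ell^1(h\Z^N)}$ for nonnegative data.

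The single ingredient I import from the symbol analysis of Section~\ref{sec:symfour} concerns $g(\xi):=-m_h(\xi)\ge0$: under \eqref{as:weight4} with $s\in(0,1)$, or \eqref{as:weight5} with $s=1$, there is $K>0$ with $g(\xi)=K|\xi|^{2s}+o(|\xi|^{2s})$ as $\xi\to0$, together with a global lower bound $g(\xi)\ge c|\xi|^{2s}$ and a near-origin upper bound $g(\xi)\le C|\xi|^{2s}$ on $Q_h$ (the lower bound away from the origin follows from \eqref{as:omega2}, since the axis weights force $g$ to be strictly positive on $Q_h\setminus\{0\}$ and hence bounded below on $Q_h\setminus B_\delta$). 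Granting this, I would write the inversion formulas for $z(x_\alpha,t_j)$ and for $\|\varphi\|_{\ell^1}\Gamma_s(x_\alpha,t_j)=(2\pi)^{-N}\mathcal F_h\varphi(0)\int_{\R^N}e^{-Kt_j|\xi|^{2s}}e^{ix_\alpha\cdot\xi}\,d\xi$, bound the supremum over $\alpha$ by placing absolute values inside the integrals (using $|e^{ix_\alpha\cdot\xi}|=1$), and discard the exterior piece $\int_{\R^N\setminus Q_h}e^{-Kt_j|\xi|^{2s}}\,d\xi$, which is exponentially small in $t_j$. What remains is $\int_{Q_h}\big|(1+\tau m_h(\xi))^j\mathcal F_h\varphi(\xi)-\mathcal F_h\varphi(0)e^{-Kt_j|\xi|^{2s}}\big|\,d\xi$, split into a data term $E_1=\int_{Q_h}(1+\tau m_h)^j|\mathcal F_h\varphi(\xi)-\mathcal F_h\varphi(0)|\,d\xi$ and a semigroup term $E_2=|\mathcal F_h\varphi(0)|\int_{Q_h}\big|(1+\tau m_h(\xi))^j-e^{-Kt_j|\xi|^{2s}}\big|\,d\xi$.

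Each term is then estimated by the parabolic rescaling $\xi=t_j^{-1/(2s)}\eta$, which produces the factor $t_j^{-N/(2s)}$. For $E_1$, bounding $(1+\tau m_h)^j\le e^{-ct_j|\xi|^{2s}}$ and rescaling gives $t_j^{N/(2s)}E_1\le\int e^{-c|\eta|^{2s}}|\mathcal F_h\varphi(t_j^{-1/(2s)}\eta)-\mathcal F_h\varphi(0)|\,d\eta\to0$ by dominated convergence, using only continuity of $\mathcal F_h\varphi$ at the origin (guaranteed by $\varphi\in\ell^1$). For $E_2$ I insert $e^{-t_j g}$ as an intermediary: the time-stepping error is controlled via $1-x\ge e^{-x-x^2}$ on $[0,\tfrac12]$, giving $0\le e^{-t_jg}-(1-\tau g)^j\le t_j\tau g^2 e^{-t_jg}$, which after rescaling is $O(t_j^{-N/(2s)-1})$ because the crucial low frequencies satisfy $\tau g(\xi)\to0$; the consistency error $|e^{-t_jg}-e^{-Kt_j|\xi|^{2s}}|\le t_j|g(\xi)-K|\xi|^{2s}|e^{-ct_j|\xi|^{2s}}$ rescales to $t_j^{-N/(2s)}\int|\eta|^{2s}o(1)e^{-c|\eta|^{2s}}\,d\eta=o(t_j^{-N/(2s)})$ thanks to $g(\xi)-K|\xi|^{2s}=o(|\xi|^{2s})$; in both cases the complementary region $|\xi|\ge\delta$ contributes only an exponentially small $e^{-ct_j}$. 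Collecting these gives $\limsup_j t_j^{N/(2s)}\sup_\alpha|z-\|\varphi\|_{\ell^1}\Gamma_s|=0\le C$. Finally, the ``in particular'' estimate follows from the lower bound alone: $\|z(\cdot,t_j)\|_{\ell^\infty}\le(2\pi)^{-N}\|\mathcal F_h\varphi\|_{\infty}\int_{Q_h}e^{-ct_j|\xi|^{2s}}\,d\xi\le\tilde C\,t_j^{-N/(2s)}$ after the same rescaling.

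The main obstacle I anticipate is not the Fourier bookkeeping but the symbol analysis itself, which is why the conceptual content sits in the fixed, non-vanishing time step $\tau$: the discrete-in-time propagator $(1+\tau m_h)^j$ does not approximate the continuous semigroup $e^{t_j m_h}$ uniformly, so the emergence of the continuous profile $\Gamma_s$ can only be harvested from the low-frequency window $|\xi|\lesssim t_j^{-1/(2s)}$, where $\tau g(\xi)\to0$ makes the time discretization asymptotically transparent. Establishing $g(\xi)=K|\xi|^{2s}+o(|\xi|^{2s})$ with the correct $K$ in the nonlocal case \eqref{as:weight4} is the genuinely delicate point, as it amounts to a Riemann-sum/dominated-convergence analysis of $\sum_{\beta\neq0}(1-\cos(x_\beta\cdot\xi))\omega(\beta,h)$ against the fractional-Laplacian symbol; I would treat this as the separate input provided by Section~\ref{sec:symfour}.
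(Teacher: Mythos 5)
Your overall route---the semidiscrete Fourier transform, the propagator identity $\mathcal F_h[z](\xi,t_j)=(1+\tau m(\xi))^j\mathcal F_h[\varphi](\xi)$, the splitting into a data term and a semigroup term, and the parabolic rescaling $\xi=t_j^{-1/(2s)}\eta$---is exactly the paper's proof of Theorem \ref{convergencia-pb-lineal}. The problem is the ``single ingredient'' you import from the symbol analysis: you assume $g(\xi)=-m(\xi)=K|\xi|^{2s}+o(|\xi|^{2s})$ as $\xi\to0$ in \emph{both} cases. Under \eqref{as:weight5} with $s=1$ this is indeed Lemma \ref{lem:simbolp}\eqref{simbolp-item-1}. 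Under \eqref{as:weight4} with $s\in(0,1)$ it is false in general: the constants $C_1<C_2$ in the weight bounds are unrelated, so Lemma \ref{lem:simbolp}\eqref{simbolp-item-3}--\eqref{simbolp-item-2} only yields the two one-sided bounds \eqref{eq:bddabove} and \eqref{eq:bddabovebis} with \emph{different} constants, $K|\xi|^{2s}\le g(\xi)\le K_1|\xi|^{2s}$ near the origin, and no limit of $g(\xi)/|\xi|^{2s}$ need exist (take weights oscillating between the two envelopes). Consequently your consistency estimate $|e^{-t_jg}-e^{-Kt_j|\xi|^{2s}}|\le t_j\,|g(\xi)-K|\xi|^{2s}|\,e^{-ct_j|\xi|^{2s}}$ rescales to a contribution of order $(K_1-K)\int_{\R^N}|\eta|^{2s}e^{-c|\eta|^{2s}}d\eta\cdot t_j^{-N/(2s)}$---a fixed constant, not $o(1)$---and your claimed conclusion $\limsup_j t_j^{N/(2s)}\sup_\alpha|\,\cdot\,|=0$ is unobtainable in that case. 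This is precisely why the theorem is stated with a constant $C$: the paper's proof uses only \eqref{eq:bddabove}, bounds the discrepancy in the critical region by the crude sum $(1+\tau m(\xi))^j+e^{-Kt_j|\xi|^{2s}}\le 2e^{-Kt_j|\xi|^{2s}}$, and obtains $C$ of the size of $\int_{\R^N}e^{-K|\eta|^{2s}}d\eta$, reserving the sharp statement $C=0$ for the extra hypothesis $\lim_{\xi\to0}m(\xi)/|\xi|^{2s}=-K$ (Remark \ref{comportamiento}), which covers \eqref{as:weight5} with $s=1$ but not \eqref{as:weight4}.

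The gap is repairable without changing your architecture: keep your data-term estimate and your time-stepping estimate (the bound $0\le e^{-t_jg}-(1-\tau g)^j\le t_j\tau g^2e^{-t_jg}$ via $1-x\ge e^{-x-x^2}$ is a nice refinement, in the spirit of Remark \ref{comportamiento}), but in the consistency step give up the cancellation and use the triangle inequality together with the lower bound $g\ge K|\xi|^{2s}$; this yields \eqref{eq:asbehaintro} with a finite nonzero $C$, which is all the theorem claims. Two smaller points. First, you invoke \eqref{as:omega2} to get positivity of $g$ away from the origin, but \eqref{as:omega2} is not among the hypotheses of this theorem; under \eqref{as:weight4} the positivity of the far-field weights serves that purpose, and the paper instead appeals to \eqref{propsimbol}. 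Second, your ``in particular'' decay estimate and the treatment of $\R^N\setminus Q_h$ are fine and essentially identical to the paper's Corollary \ref{dacaimiento} and the term $I_2$ there.
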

With the decay  of the linear equation, $\rho_d=\rho_r$ implies that the Fujita exponent is $p_f= 1+ 2s/N$. We have the following result for exponents above and below the Fujita exponent:
\begin{theorem}\label{teo.fujita1}
    Assume \eqref{CFL1}, \eqref{as:omega1}, \eqref{as:omega2}, \eqref{as:omega3} and either \eqref{as:weight4}  and $s\in(0,1)$ or \eqref{as:weight5} and $s=1$. Then
    \begin{enumerate}[\rm(a)]
    \item\label{teo.fujita1-blowup} if $1<p\le  1+2s/N$, then all solutions of \eqref{eq-discreta} blow up in finite time;
    \item\label{teo.fujita1-global} if $p>1+2s/N$, then there exist both global and blow-up solutions of \eqref{eq-discreta}.
    \end{enumerate}
\end{theorem}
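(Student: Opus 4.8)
The plan is to turn the heuristic balance $\rho_d=\rho_r$ into a rigorous dichotomy. The ODE exponent $\rho_r=1/(p-1)$ is supplied by Appendix \ref{sec:choicetau}, and the linear decay exponent $\rho_d=N/(2s)$ by Theorem \ref{comportamiento lineal}; the goal is to show that a suitable discrete Fujita functional inherits \emph{both} and blows up precisely when $\rho_d\le\rho_r$, i.e. $p\le 1+2s/N$. Throughout I rely on three structural facts for \eqref{eq-discreta}: the comparison principle (guaranteed by \eqref{CFL1} together with $\omega\ge0$), conservation of the discrete mass $h^N\sum_\alpha u(x_\alpha,t_j)$, and self-adjointness of $L_h$ in $\ell^2(h\Z^N)$ — the last two both immediate from the symmetry $\omega(\alpha,h)=\omega(-\alpha,h)$ in \eqref{as:omega1}.

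For part \eqref{teo.fujita1-blowup} I would argue by contradiction, assuming a global solution, and test $u$ against a \emph{backward discrete heat kernel} built on the same adaptive mesh. Fix a terminal index $J$; writing the one-step propagator $I+\tau_j L_h$ (a positive, mass-conserving operator under \eqref{CFL1}), define $\Phi^J_J:=h^{-N}\delta_{\alpha_0}$ and recurse backward by $\Phi^J_j:=(I+\tau_j L_h)\Phi^J_{j+1}$, so that each $\Phi^J_j\ge0$ and $h^N\sum_\alpha\Phi^J_j(\alpha)=1$. Setting $J_j:=h^N\sum_\alpha u(x_\alpha,t_j)\Phi^J_j(\alpha)$ and using self-adjointness to move the diffusion from $u$ onto the kernel, where the backward step absorbs it exactly, only the reaction term survives:
\[
J_{j+1}-J_j=\tau_j\,h^N\sum_{\alpha}u^p(x_\alpha,t_j)\,\Phi^J_{j+1}(\alpha)\;\ge\;\tau_j\Big(h^N\sum_\alpha u(x_\alpha,t_j)\Phi^J_{j+1}(\alpha)\Big)^p,
\]
the last step being the discrete Jensen inequality for the probability weights $h^N\Phi^J_{j+1}$. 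After controlling the one-step discrepancy between $\Phi^J_{j+1}$ and $\Phi^J_j$ (a single diffusion step, handled by comparability of the kernels), this yields a discrete ODE inequality $\partial_{\tau_j}J_j\gtrsim J_j^{\,p}$, which by the analysis of Appendix \ref{sec:choicetau} blows up before the horizon $t_J$ as soon as $J_0^{\,1-p}\lesssim (p-1)\,t_J$. The decisive input from Theorem \ref{comportamiento lineal} is that $\Phi^J_0$ decays like $t_J^{-N/(2s)}$ near $x_{\alpha_0}$, so for nontrivial nonnegative $\varphi$ one has $J_0\simeq c\,t_J^{-N/(2s)}$; the blow-up condition becomes $t_J^{\,N(p-1)/(2s)-1}\lesssim C$, valid for all large $t_J$ exactly when $p<1+2s/N$ — contradicting globality.

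The borderline $p=1+2s/N$, where the exponent $N(p-1)/(2s)-1$ vanishes, is the \textbf{main obstacle} and requires a separate refinement: one iterates the functional (bootstrapping the growth of $J_j$) or inserts a slowly varying logarithmic factor into the weighted mass, as in the continuous critical case. A second, purely discrete difficulty also concentrates here: the decay of Theorem \ref{comportamiento lineal} is stated for a \emph{constant} step, so one must check that a putative global solution keeps the steps $\tau_j$ admissible and bounded below (hence the kernel retains the $t^{-N/(2s)}$ rate), and that letting $t_J\to\infty$ is legitimate under \eqref{CFL1}.

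For part \eqref{teo.fujita1-global}, the existence of blow-up solutions for every $p>1$ follows from the same functional: choosing $\varphi$ large makes $J_0$ large, so $J_0^{1-p}<(p-1)t_J$ for a \emph{fixed} finite $t_J$ and the solution cannot reach $t_J$; alternatively one invokes directly the criteria of Section \ref{bum-discreto}. The complementary existence of global solutions when $p>1+2s/N$ I would obtain from small data by closing a bootstrap on the discrete Duhamel representation $u(\cdot,t_j)=\mathcal S_{0\to j}\varphi+\sum_{k<j}\tau_k\,\mathcal S_{(k{+}1)\to j}\,u^p(\cdot,t_k)$, with $\mathcal S_{k\to j}$ the adaptive linear propagator, using the $\ell^1\!\to\!\ell^\infty$ smoothing of Theorem \ref{comportamiento lineal}. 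The condition $p>1+2s/N$ is precisely what makes the resulting discrete time-convolution summable, so that the ansatz $\|u(\cdot,t_j)\|_{\ell^\infty}\le\varepsilon(1+t_j)^{-N/(2s)}$ is preserved for $\varepsilon$ small and the solution is global; equivalently, one can build an explicit kernel-shaped discrete supersolution and conclude by comparison. Beyond the critical exponent, the residual technicalities are again all tied to making the telescoping and the time-summability uniform across the adaptive mesh.
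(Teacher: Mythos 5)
Your strategy for part \eqref{teo.fujita1-blowup} (testing a putative global solution against a backward discrete heat kernel) hinges on a lower bound that the paper's results do not supply: the claim $J_0\simeq c\,t_J^{-N/(2s)}$ requires the discrete kernel to be bounded \emph{below} at the self-similar scale. Theorem \ref{comportamiento lineal} cannot deliver this. It states $\limsup_j t_j^{N/(2s)}\sup_\alpha|z(x_\alpha,t_j)-\|\varphi\|_{\ell^1(h\Z^N)}\Gamma_s(x_\alpha,t_j)|\le C$ with a constant $C$ of the \emph{same order} as $\sup_x\Gamma_s(x,t_j)\sim t_j^{-N/(2s)}$; this is an upper estimate in disguise, perfectly compatible with $z$ being far smaller than the kernel (even zero) exactly where you need positivity. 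The sharpened version with $C=0$ (Remark \ref{comportamiento}) needs the symbol limit \eqref{eq:xisq}, which under \eqref{as:weight4} is not available --- Lemma \ref{lem:simbolp} only gives two-sided bounds $-K_1|\xi|^{2s}\le m(\xi)\le -K|\xi|^{2s}$ with possibly different constants $K\ne K_1$ --- and, in any case, Theorem \ref{comportamiento lineal} is proved for a \emph{constant} time step via the representation $(1+\tau m(\xi))^j$, whereas your kernel is a product of one-step operators with varying $\tau_j$. A second unresolved point is your Jensen step: you need $\langle u,\Phi^J_{j+1}\rangle\gtrsim\langle u,\Phi^J_j\rangle$, i.e.\ a pointwise bound $\Phi^J_j\le C\Phi^J_{j+1}$, which fails near the terminal index (there $\Phi^J_{j+1}$ is a delta and $\Phi^J_j$ is not); repairing it amounts to a discrete parabolic Harnack-type estimate that is proved nowhere. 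These gaps are precisely why the paper argues differently: Kaplan's criterion (Lemma \ref{kapplan}) replaces the kernel by the first Dirichlet eigenfunction $\phi_{h,B_R}$, whose needed lower bound $\|\phi_{h,B_R}\|_{\ell^\infty(h\Z^N)}\ge cR^{-N}$ is free (it is $\ell^1$-normalized and supported in $B_R$), whose maximum sits at the origin by rearrangement (Theorem \ref{thm:rear} --- this is what \eqref{as:omega3} is assumed for), and whose eigenvalue decays like $R^{-2s}$ (Lemmas \ref{lem:groeigRsecond} and \ref{lem:decayeigfrac}).

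The critical case $p=1+2s/N$ belongs to part \eqref{teo.fujita1-blowup}, and your proposal only gestures at it (``iterate the functional or insert a logarithmic factor''); that is not a proof, and in this setting the paper needs substantial extra machinery there: the rescaled space-time test-function estimate of Lemma \ref{no globales} (using the symbol bound \eqref{eq:bddabovebis} and the semidiscrete Fourier transform), combined in Theorem \ref{fujitabum} with a uniform-in-time $\ell^1$ bound extracted from the Kaplan functional and the weak-* limit of normalized eigenfunctions (Lemma \ref{lem:limrenomeig}). By contrast, part \eqref{teo.fujita1-global} of your proposal is essentially sound: your ``kernel-shaped discrete supersolution'' is exactly the paper's Lemma \ref{hay globales} (the supersolution $(t_J+t_j)^\lambda z(x_\alpha,t_J+t_j)$ built from the linear solution, using only the \emph{upper} decay of Theorem \ref{comportamiento lineal}, plus the observation that $\overline u\le1$ freezes $\tau_j=\tau$); your small-data Duhamel bootstrap is a viable alternative resting on the same ingredient; and blow-up for large data is Lemma \ref{lem-bum} from Section \ref{bum-discreto}, as you note.
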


\begin{remark}
Let us assume that \eqref{as:weight4} holds in the the previous Theorem.
For the existence of global solution if $p>1+2s/N$ we only need the lower estimate, $C_1 h^N |x_\alpha|^{-N-2s}\le  \omega(\alpha,h)$. However, in the blow-up range if $1<p<1+2s/N$ we only need the opposite inequality, $ \omega(\alpha,h)\leq C_2 h^N |x_\alpha|^{-N-2s}$ while for the critical case $p=1+2s/N$ we need both.
\end{remark}

Next, we turn our attention to the blow-up rate of the discrete problem.

\begin{theorem}\label{tasas}
     Assume \eqref{as:omega1} and \eqref{CFL1}. Let $u$ be a blow-up solution of \eqref{eq-discreta} with blow-up time $T_{h,\tau}$. Then, there exist two positive constants $C_1$ and $C_2$ (independent on $h$ and $\tau$) such that
$$
C_1 (T_{h,\tau}-t_j)^{\frac{-1}{p-1}}\le \|u(\cdot,t_j)\|_{\ell^\infty(h\Z^N)}\le
C_2 (T_{h,\tau}-t_j)^{\frac{-1}{p-1}},
$$
for all $t_j\ge t_m$, with $m=m(h,\tau)$.
\end{theorem}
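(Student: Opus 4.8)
The plan is to reduce the whole statement to a scalar recursion for the sequence $M_j := \|u(\cdot,t_j)\|_{\ell^\infty(h\Z^N)}$ and to compare it with the reaction ODE $y'=y^p$, whose solutions blow up precisely at rate $(T-t)^{-1/(p-1)}$. Write $\|\omega\|:=\|\omega(\cdot,h)\|_{\ell^1(\Z^N\setminus\{0\})}$. Since $u$ blows up, $M_j\to\infty$, so there is an index $m_0$ with $M_j\ge 1$ for $j\ge m_0$, whence $\tau_j=\tau M_j^{1-p}$ there. It is convenient to work with $s_j:=M_j^{1-p}$, which tends to $0$ and satisfies $\tau_j=\tau s_j$ and $T_{h,\tau}-t_j=\tau\sum_{k\ge j}s_k$ for $j\ge m_0$; the assertion is then equivalent to $s_j\asymp T_{h,\tau}-t_j$ with explicit constants.

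First I would prove the lower bound for $M_j$ (the easy direction). Writing the scheme as $u(\cdot,t_{j+1})=(I+\tau_j L_h)u(\cdot,t_j)+\tau_j u^p(\cdot,t_j)$ and using \eqref{CFL1}, i.e. $\tau_j\le\tau\le(4\|\omega\|)^{-1}$, the operator $I+\tau_j L_h$ is at every node a convex average of the values of $u$ (nonnegative coefficients summing to $1$), so $\|(I+\tau_j L_h)u(\cdot,t_j)\|_{\ell^\infty}\le M_j$. This yields $M_{j+1}\le M_j+\tau_j M_j^p$. Raising to the negative power $1-p$ and using convexity of $x\mapsto(1+x)^{-(p-1)}$ gives $s_j-s_{j+1}\le(p-1)\tau_j$; summing from $j$ to $\infty$ and using $s_\infty=0$ gives $s_j\le(p-1)(T_{h,\tau}-t_j)$, i.e. $M_j\ge\big((p-1)(T_{h,\tau}-t_j)\big)^{-1/(p-1)}$. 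This is the lower bound with $C_1=(p-1)^{-1/(p-1)}$, manifestly independent of $h$ and $\tau$.

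The main work is the upper bound. I would evaluate the scheme at a node $x_{\alpha_j}$ where the maximum $M_j$ is attained. The diffusion term is nonpositive there, but bounded below by the crude estimate $L_h u(x_{\alpha_j},t_j)\ge-M_j\|\omega\|$ (all neighbouring values are $\ge 0$), so
\[
M_{j+1}\ge u(x_{\alpha_j},t_{j+1})\ge M_j+\tau_j M_j^p-\tau_j M_j\|\omega\|=M_j+\tau_j M_j^p\big(1-M_j^{1-p}\|\omega\|\big).
\]
Since $M_j^{1-p}\to 0$, there is an index $m\ge m_0$, depending on $h$ and $\tau$ through $\|\omega\|$, with $M_j^{1-p}\|\omega\|\le\tfrac12$ for $j\ge m$; for those indices $M_{j+1}\ge M_j+\tfrac12\tau_j M_j^p$. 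Converting again to $s_j$, this becomes the geometric-type decay $s_{j+1}\le(1+\tfrac\tau2)^{-(p-1)}s_j$, and summing the tail $T_{h,\tau}-t_j=\tau\sum_{k\ge j}s_k$ produces the reverse inequality $s_j\ge\tilde C_2(T_{h,\tau}-t_j)$, hence $M_j\le C_2(T_{h,\tau}-t_j)^{-1/(p-1)}$. The dependence of $m$ on $h,\tau$ here is exactly the origin of the index $m=m(h,\tau)$ in the statement.

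The step I expect to be the main obstacle is this last lower bound for $M_{j+1}$: it must be strong enough to close the estimate while keeping $C_2$ independent of $h$ and $\tau$. Two points need care. The diffusion loss $\tau_j M_j\|\omega\|$ is dominated by the reaction gain only once $M_j$ is large relative to $\|\omega\|$, which is precisely what fixes the threshold $m$. Moreover, after summing the geometric tail the resulting constant is $\tilde C_2=\big(1-(1+\tfrac\tau2)^{-(p-1)}\big)/\tau$, and one must check it is bounded away from $0$ uniformly: the map $x\mapsto\big(1-(1+x)^{-(p-1)}\big)/x$ is decreasing with limit $p-1$ as $x\to 0^+$, so $\tilde C_2\to\tfrac{p-1}{2}$ in the refinement regime $\tau\to 0$ and is controlled from below on the admissible range of time steps, giving a $C_2$ independent of $h$ and $\tau$. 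Everything else is routine manipulation of the two scalar recursions for $M_j$ and $s_j$.
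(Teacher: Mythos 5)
Your proof is correct and follows essentially the same route as the paper: the two one-sided recursions for $M_j$ obtained at (near-)maximum points under \eqref{CFL1}, together with the geometric-type summation of the tail $T_{h,\tau}-t_j=\sum_{k\ge j}\tau_k$, are precisely the content of Steps 1--4 in the paper's Lemma \ref{lem-bum}, from which Theorem \ref{tasas} is then deduced (the paper establishes the exact limit \eqref{eq:bupratedisc} and bounds $g(\tau)=\tau(1+\tau)^{p-1}/((1+\tau)^{p-1}-1)$ between $1/(p-1)$ and $2^{p-1}/(2^{p-1}-1)$ for $\tau\le 1$, exactly parallel to your uniformity check on $\tilde C_2$). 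The only repairs needed are minor: the supremum $M_j$ over the infinite lattice $h\Z^N$ need not be attained, so your evaluation at a maximizing node should be replaced by a sequence of near-maximizing nodes followed by a limit (as the paper does), and, as in the paper, the uniformity of $C_2$ in $\tau$ implicitly requires restricting $\tau$ to a bounded range such as $\tau\le 1$, since \eqref{CFL1} alone does not cap $\tau$ when $\|\omega(\cdot,h)\|_{\ell^1(\Z^N\setminus\{0\})}$ is small.
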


\subsection{Convergence of blow-up times}\label{sec:main-butimes}
In order to ensure convergence, we need to assume some consistency properties of the operators and some regularity of the solutions.

Given $\overline{T}>0$, we consider a normed space $\mathcal{H}_{\overline{T}}$ of functions $f:\R^d\times[0,\overline{T}]\to \mathbb{R}^N$ such that
\begin{equation}\label{Lambdas}
 \Lambda_1(x_\alpha,t_j):= L f(x_\alpha,t_j) - L_h f(x_\alpha,t_j) \quad  \textup{and} \quad \Lambda_2(x_\alpha,t_j):= f_t(x_\alpha,t_j) - \partial_{\tau_j} f(x_\alpha,t_j)
\end{equation}
satisfy
\begin{equation}\label{as:consistency} \tag{$\textup{A}_{\textup{c}}$}
     \sup_{x_\alpha\in h\Z^N,\, t_j \in [0,\overline{T}]  }|\Lambda_1(x_\alpha,t_j)| = \|f\|_{\mathcal{H}_{\overline{T}}} \varrho_1(h)  \quad  \textup{and} \quad  \sup_{x_\alpha\in h\Z^N,\, t_j \in [0,\overline{T}]  }|\Lambda_2(x_\alpha,t_j)| = \|f\|_{\mathcal{H}_{\overline{T}}} \varrho_2(\tau)
\end{equation}
for some modulus of consistency $\varrho_i$ such that $\varrho_i(\xi)\to0$ as $\xi\to0^+$ for $i=1,2$.

\begin{remark}
For instance, considering  the classical  central difference approximation  to approximate the heat equation, it is well know that if $f\in C_b^{2+\varepsilon,1+\delta}(\mathbb R^N\times [0,\overline T])$ for some $\varepsilon>0$ and $\delta>0$, we obtain consistency in space ($\rho(h)=Ch^\varepsilon$) and in time ($\mu(\tau)=C\tau^\delta$). Then, we can take
$$
\mathcal{H}_{\overline{T}}= C_b^{2+\varepsilon,1+\delta}(\mathbb R^N\times [0,\overline T]).
$$

If we approximate the fractional heat equation using the weights given by
Remark \ref{ejemplos}, we can take $
\mathcal{H}_{\overline{T}}= C_b^{2s+\varepsilon,1+\delta}(\mathbb R^N\times [0,\overline T]).
$
\end{remark}

We present first the convergence result of the solution before the blow-up time.

\begin{theorem}\label{thm:convergencebeforeblowup}
    Assume \eqref{as:omega1}, \eqref{as:consistency} and \eqref{CFL1}. Let $w$ be a blow-up solution of \eqref{1.1} with blow-up time $T$. Assume $w\in \mathcal{H}_{T-\rho}$ for all $\rho\in(0,T)$. Let also $u$ be a blow-up solution of \eqref{eq-discreta} with  blow-up  time $T_{h,\tau}$. Then,
    \[
    \max_{x_\alpha\in h\mathbb{Z}^N, t_j \in [0,T-\rho]} \left|w(x_\alpha,t_j)-u(x_\alpha,t_j)\right| \leq C (\varrho_1(h)+\varrho_2(\tau)) \quad \textup{as} \quad h,\tau\to0^+,
    \]
    with $C=C(p,T,\rho, w)>0$. 
     In particular, $
    \liminf_{h,\tau\to0} T_{h,\tau} \geq T$.

\end{theorem}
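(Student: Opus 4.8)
The plan is a standard consistency-plus-stability argument adapted to the monotone scheme, closed by a bootstrap that tames the nonlinearity, and finished by a continuation argument for the $\liminf$ claim. First I would feed the exact continuous solution into the scheme. Writing $e:=w-u$ and using the definitions in \eqref{Lambdas}, the identity $w_t=Lw+w^p$ evaluated on the grid reads $\partial_{\tau_j}w = L_h w + w^p + \Lambda_1 - \Lambda_2$, so subtracting the discrete equation \eqref{eq-discreta} gives the error recursion
\[
e(x_\alpha,t_{j+1}) = \bigl(e(x_\alpha,t_j)+\tau_j L_h e(x_\alpha,t_j)\bigr) + \tau_j\bigl(w^p-u^p\bigr)(x_\alpha,t_j) + \tau_j\bigl(\Lambda_1-\Lambda_2\bigr)(x_\alpha,t_j).
\]
The role of \eqref{CFL1} is that the linear update $v\mapsto v+\tau_j L_h v$ is an averaging operator: its off-diagonal coefficients $\tau_j\omega(\beta,h)$ are nonnegative by \eqref{as:omega1}, its diagonal coefficient $1-\tau_j\sum_{\beta}\omega(\beta,h)$ is nonnegative because $\tau_j\sum_{\beta}\omega(\beta,h)\le\tfrac14$, and all coefficients sum to one; hence the update is a convex combination and contracts in $\ell^\infty$, $\|v+\tau_j L_h v\|_{\ell^\infty(h\Z^N)}\le\|v\|_{\ell^\infty(h\Z^N)}$. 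By \eqref{as:consistency} the inhomogeneous term is bounded by $\tau_j\|w\|_{\mathcal{H}_{T-\rho}}(\varrho_1(h)+\varrho_2(\tau))$ uniformly on $[0,T-\rho]$.

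The nonlinearity is handled by the mean value theorem, $w^p-u^p = p\,\xi^{p-1}e$ with $\xi$ between $u$ and $w$, which is harmless \emph{provided} $u$ stays bounded on $[0,T-\rho]$; securing this boundedness is the delicate point, since a priori the discrete solution could run to blow-up before $T-\rho$. I would close the loop with a bootstrap: set $M:=\sup_{[0,T-\rho]}\|w(\cdot,t)\|_{\ell^\infty(h\Z^N)}$, let $j^\star$ be the first grid index with $\|e(\cdot,t_{j^\star})\|_{\ell^\infty(h\Z^N)}>1$, and observe that for $j<j^\star$ one has $\|u(\cdot,t_j)\|_{\ell^\infty(h\Z^N)}\le M+1$, so $|\xi|^{p-1}\le(M+1)^{p-1}$. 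Taking $\ell^\infty$ norms in the recursion and using the contraction yields the discrete Gronwall inequality $E_{j+1}\le(1+\tau_j K)E_j + \tau_j C(\varrho_1(h)+\varrho_2(\tau))$, where $E_j:=\|e(\cdot,t_j)\|_{\ell^\infty(h\Z^N)}$, $K:=p(M+1)^{p-1}$, and $E_0=0$ since the initial data agree. As $t_j\le T-\rho$, this gives $E_j\le C(\varrho_1(h)+\varrho_2(\tau))\,K^{-1}e^{K(T-\rho)}$, which is strictly below $1$ once $h,\tau$ are small; hence $j^\star$ never occurs on $[0,T-\rho]$, the estimate holds there, and the constant has the claimed dependence $C=C(p,T,\rho,w)$.

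For the $\liminf$ statement I would use that the same bootstrap forces $\|u(\cdot,t_j)\|_{\ell^\infty(h\Z^N)}\le M+1$ at all grid times in $[0,T-\rho]$. Then by \eqref{CFL1} the adaptive steps obey $\tau_j=\tau\min\{1,\|u(\cdot,t_j)\|_{\ell^\infty(h\Z^N)}^{1-p}\}\ge\tau\min\{1,(M+1)^{1-p}\}>0$, a uniform positive lower bound, so the grid $\{t_j\}$ advances by a fixed amount per step and reaches past $T-\rho$ in finitely many steps without $u$ blowing up. Consequently $T_{h,\tau}>T-\rho$ for all sufficiently small $h,\tau$, whence $\liminf_{h,\tau\to0}T_{h,\tau}\ge T-\rho$; letting $\rho\to0^+$ proves the claim.

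The \textbf{main obstacle} is exactly the circular dependence sketched above: controlling $u$ is needed both for the mean value bound on $w^p-u^p$ and to keep the adaptive time steps $\tau_j$ from collapsing, yet that control is itself what one is trying to prove via the error estimate. The threshold bootstrap (stopping at the first time $E_j$ exceeds $1$ and showing the Gronwall bound precludes it) is precisely the device that breaks this circularity; everything else — the telescoping of the Gronwall recursion and the consistency bookkeeping — is routine.
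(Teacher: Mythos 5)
Your proposal is correct and follows essentially the same route as the paper: the same error recursion via $\Lambda_1,\Lambda_2$, the CFL-induced $\ell^\infty$ contraction of the linear update, the mean value bound on $w^p-u^p$, the threshold-$1$ bootstrap to break the circularity, and a discrete Gronwall estimate (the paper closes Gronwall by comparison with the convex ODE solution $y(t)=\tfrac{R}{K}(e^{Kt}-1)$ rather than by telescoping, which is an immaterial difference). Your explicit argument for $\liminf_{h,\tau\to0}T_{h,\tau}\geq T$ via the uniform lower bound on the adaptive steps $\tau_j$ is a welcome elaboration of a point the paper leaves implicit.
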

To prove convergence of the blow-up times, we need to restrict ourselves  either to a smaller class of diffusion operators (zero-order ones) or to special types of initial conditions.

\begin{theorem}\label{thm:conbwtimes}.
    Let the assumptions of Theorem \ref{thm:convergencebeforeblowup} hold. Additionally assume that at least one of the following properties holds:
\begin{enumerate}[\rm (a)]
    \item\label{lemconbwt-item1} There exists $h_0\in(0,1)$ such that $\sup_{h\in(0,h_0)}\sum_{\beta\in \mathbb{Z}^N\setminus\{0\}} \omega(\beta,h)<+\infty$.
\item\label{lemconbwt-item2} There exist $\veps,h_0\in(0,1)$ such that the initial data $\varphi$ satisfies $
L_h \varphi(x_\alpha) + (1-\veps) \varphi^p(x_\alpha)\geq 0$  for all   $h\in(0,h_0)$  and  $x_\alpha\in h\mathbb{Z}^N$.
\end{enumerate}
Then, $
\lim_{h,\tau\to0^+} T_{h,\tau}=T$.
\end{theorem}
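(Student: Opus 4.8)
The plan is to combine the lower bound $\liminf_{h,\tau\to0}T_{h,\tau}\ge T$ already furnished by Theorem \ref{thm:convergencebeforeblowup} with a matching upper bound $\limsup_{h,\tau\to0}T_{h,\tau}\le T$, which is the only missing ingredient. The mechanism behind the upper bound is that, once the discrete solution is large, the reaction term dominates the diffusion and the maximum $U_j:=\|u(\cdot,t_j)\|_{\ell^\infty(h\Z^N)}$ obeys a scalar difference inequality of the type $U_{j+1}\ge U_j+c\,\tau_j\,U_j^p$ for some $c>0$. Such an inequality mimics the ODE $y'=c\,y^p$ and forces blow-up within an additional time that is $O(U^{1-p})$, hence small precisely when $U$ is large. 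Feeding in the convergence on $[0,T-\rho]$ (which keeps $U$ close to the continuous maximum, itself diverging like $\rho^{-1/(p-1)}$) will then pin $T_{h,\tau}$ between $T-\rho$ and $T-\rho+C\rho$, and letting $\rho\to0^+$ closes the argument.

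The two structural hypotheses serve only to produce the difference inequality, each in its own way. Under \ref{lemconbwt-item1}, evaluating \eqref{eq-discreta} at (or near) a point $x_{\alpha_j}$ where $u(\cdot,t_j)$ is maximal and using $0\le u\le U_j$ gives $L_h u(x_{\alpha_j},t_j)\ge -U_j\sum_{\beta\in\Z^N\setminus\{0\}}\omega(\beta,h)\ge -C_\omega U_j$ with $C_\omega:=\sup_{h\in(0,h_0)}\sum_{\beta\in\Z^N\setminus\{0\}}\omega(\beta,h)<\infty$; thus $U_{j+1}\ge U_j+\tau_j U_j(U_j^{p-1}-C_\omega)\ge U_j+\tfrac12\tau_j U_j^p$ as soon as $U_j\ge(2C_\omega)^{1/(p-1)}$, a threshold that is crossed for $\rho$ small and never lost since the inequality makes $U_j$ increasing. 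Under \ref{lemconbwt-item2}, I would instead propagate the pointwise sign condition $P_j:=L_h u(\cdot,t_j)+(1-\varepsilon)u^p(\cdot,t_j)\ge0$, which holds at $j=0$ by hypothesis. Writing $\partial_{\tau_j}u=P_j+\varepsilon u^p$ and expanding $P_{j+1}=L_h u(\cdot,t_{j+1})+(1-\varepsilon)u^p(\cdot,t_{j+1})$ with the linearity of $L_h$, the time-convexity bound $u^p(\cdot,t_{j+1})\ge u^p(\cdot,t_j)+p\,u^{p-1}(\cdot,t_j)\,\tau_j\big(P_j+\varepsilon u^p(\cdot,t_j)\big)$, and the discrete chain-rule inequality $L_h(u^p)\ge p\,u^{p-1}\,L_h u$ (valid for convex $r\mapsto r^p$ because $\omega\ge0$), the nonlinear remainders combine and one is led to $P_{j+1}\ge (I+\tau_j L_h)P_j+p\,u^{p-1}(\cdot,t_j)\,\tau_j\,P_j$. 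Since \eqref{CFL1} makes $I+\tau_j L_h$ positivity-preserving and the second summand is nonnegative, $P_j\ge0$ propagates by induction, whence $\partial_{\tau_j}u\ge\varepsilon u^p$ and, at the maximum, $U_{j+1}\ge U_j+\varepsilon\,\tau_j U_j^p$.

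With the difference inequality $U_{j+1}\ge U_j+c\,\tau_j U_j^p$ in force for all $j\ge j^\ast$ (from the start in case \ref{lemconbwt-item2}), I would quantify the residual blow-up time. For $U_j\ge1$ the \eqref{CFL1} rule reads $\tau_j=\tau U_j^{1-p}$, so the inequality becomes $U_{j+1}\ge U_j(1+c\tau)$ and $U_{j^\ast+k}\ge U_{j^\ast}(1+c\tau)^k$. Summing the corresponding steps and using $p>1$ gives $\sum_{k\ge0}\tau_{j^\ast+k}\le \dfrac{\tau\,U_{j^\ast}^{1-p}}{1-(1+c\tau)^{1-p}}\le C\,U_{j^\ast}^{1-p}$, where the last bound holds for $\tau$ small because $\big(1-(1+c\tau)^{1-p}\big)/\tau\to(p-1)c$ as $\tau\to0^+$, with $C$ independent of $h$ and $\tau$ (recall the limit is taken as $\tau\to0^+$). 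Now fix $\rho\in(0,T)$ and let $j^\ast$ be the largest index with $t_{j^\ast}\le T-\rho$. Theorem \ref{thm:convergencebeforeblowup} gives $U_{j^\ast}\ge\tfrac12\|w(\cdot,T-\rho)\|_{\infty}$ for $h,\tau$ small, while the universal lower blow-up rate for \eqref{1.1} (obtained by testing $w_t=Lw+w^p$ at a maximum, where $Lw\le0$) yields $\|w(\cdot,T-\rho)\|_\infty\ge\big((p-1)\rho\big)^{-1/(p-1)}$. Hence $U_{j^\ast}^{1-p}\le C\rho$ and $T_{h,\tau}\le t_{j^\ast}+\sum_{k\ge0}\tau_{j^\ast+k}\le (T-\rho)+C\rho$, so that $\limsup_{h,\tau\to0}T_{h,\tau}\le(T-\rho)+C\rho$; letting $\rho\to0^+$ proves $\limsup_{h,\tau\to0}T_{h,\tau}\le T$ and closes the claim.

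I expect the delicate point to be the inductive propagation of $P_j\ge0$ in case \ref{lemconbwt-item2}: it hinges on using convexity twice (in time and through the discrete chain rule in space) and on the positivity preservation of $I+\tau_j L_h$ under \eqref{CFL1}, and one must verify that the leftover nonlinear terms are exactly absorbed into $p\,u^{p-1}\tau_j P_j$ rather than leaving an uncontrolled residual. A secondary technical nuisance, common to both cases, is making the residual-time estimate genuinely uniform in $(h,\tau)$ and transferring between the grid maximum $U_j$ and the continuous maximum $\|w(\cdot,t)\|_\infty$ (existence of near-maximizers and resolution of the maximum by the grid as $h\to0$), but this is routine given the regularity $w\in\mathcal{H}_{T-\rho}$.
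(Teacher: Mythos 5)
Your proposal is correct and follows essentially the same route as the paper: the paper factors the argument through Lemma \ref{lem:convbutimes} (continuous lower rate bound plus discrete upper rate bound $\Rightarrow$ convergence of blow-up times), and then derives the discrete rate exactly as you do --- in case \eqref{lemconbwt-item1} from $\partial_{\tau_j}\|u\|_{\ell^\infty}\geq -C_\omega\|u\|_{\ell^\infty}+\|u\|_{\ell^\infty}^p$, and in case \eqref{lemconbwt-item2} by propagating the nonnegativity of $G=\partial_{\tau_j}u-\veps u^p$, which by the equation is literally your $P_j=L_hu+(1-\veps)u^p$, using the same two convexity inequalities and CFL positivity preservation. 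The only cosmetic differences are that you inline the lemma and prove the continuous lower rate by testing at a maximum rather than by comparison with the planar solution $W(x,t)=((p-1)(T-t))^{-1/(p-1)}$.
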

\begin{remark}\label{rem:datagood}
Let us briefly comment on the additional assumptions of Theorem \ref{thm:conbwtimes}.
Condition \eqref{lemconbwt-item1} appears in a very natural way when you approximate a
nonlocal operator of order zero of the form $d\mu(y)=J(y)dy$ with $J\in L^1(\R^N)$. Indeed, considering the weights given in  Remark \ref{ejemplos}, we get that
$
\sum_{\beta\in h\mathbb{Z}^N} \omega(\beta,h) \le \int_{\R^N} J(y)dy <\infty.
$
On the other hand, it is simple to construct initial conditions satisfying \eqref{lemconbwt-item2}. For example, we can consider $u_0\in C^\infty_c(\R^N)$ a nonnegative radially decreasing function supported in $B_R(0)$ with $\partial^4_r u_0\ge0$ for  $|x|>R-\delta$ and $L u_0(x)+(1-2\varepsilon)u_0^p(x)\ge0$ for $x\in\R^N$. A complete proof of this fact can be found at the end of Section \ref{sec:convprop}. \end{remark}

\section{Eigenvalue problem}\label{sec:eigpair}
As before, let $h>0$ be a discretization parameter and define $x_\alpha:=h \alpha \in h\Z^N$ with $\alpha \in \Z^N$. We recall that, for a function $\phi: h\Z^N\to \R$, we define the discrete diffusion  operator
\begin{equation}\label{eq:operator}
L_h\phi(x_\alpha)= \sum_{\beta\not=0} \left(\phi(x_\alpha+x_\beta)-\phi(x_\alpha)\right) \omega(\beta,h).
\end{equation}

Now, given a bounded domain $\Omega$ we consider the Dirichlet eigenvalue problem given by
\begin{equation}\label{eq:disceig}
\left\{
\begin{split}
- L_h \phi (x_\alpha) &= \lambda_{h,\Omega} \phi(x_\alpha), \quad \textup{if} \quad x_\alpha\in \Omega,\\
\phi (x_\alpha) &= 0, \hspace{1.74cm} \textup{if} \quad x_\alpha\in \R^N \setminus \Omega.
\end{split}\right.
\end{equation}
The problem is posed in the functional space
\[
X_h(\Omega):= \{\phi\in \ell^2(h\Z^N)\,:\, \phi(x_\alpha)=0 \quad \textup{if} \quad x_\alpha \in h\Z^N\setminus \Omega\}
\]
endowed with the inner product and norm
\[
\langle\phi_1,\phi_2\rangle_{ \ell^2(h\Z^N)}:= h^N \sum_{x_\alpha\in h\Z^N} \phi_1(x_\alpha)\phi_2(x_\alpha) \quad  \textup{and}\quad
\|\phi\|_{ \ell^2(h\Z^N)}=\langle\phi_1,\phi_2\rangle_{ \ell^2(h\Z^N)}^\frac{1}{2}.
\]
We have the following regarding the eigenpairs of \eqref{eq:disceig}.

\begin{proposition}\label{prop:eig}
Assume \eqref{as:omega1}. Then, the first eigenvalue of \eqref{eq:disceig}  is given by
\[
\lambda_{h,\Omega}= \min_{{\phi\in X_h(\Omega)}\atop{\|\phi\|_{\ell^2(h\Z^N)}=1}} \langle\phi,-L_h\phi\rangle_{ \ell^2(h\Z^N)}.
\]
Moreover,
\begin{enumerate}[\rm (a)]
\item\label{prop:eig-item1} $\lambda_{h,\Omega}$ is strictly positive and there exists a nonnegative eigenfunction.
\item \label{prop:eig-item3} If \eqref{as:omega2} holds, then  any   nontrivial eigenfunction associated to $\lambda_{h,\Omega}$ is either strictly positive  or strictly negative in $ h\Z^N\cap \Omega$. Moreover, $\lambda_{h,\Omega}$ is simple.
\end{enumerate}
\end{proposition}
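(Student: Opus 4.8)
The plan is to recast the problem as finite-dimensional linear algebra. Since $\Omega$ is bounded, $h\Z^N\cap\Omega$ is a finite set, so $X_h(\Omega)$ is a finite-dimensional real inner product space, and I would view $-L_h$ (restricted by setting the output to $0$ outside $\Omega$) as a linear operator $A$ on $X_h(\Omega)$. The first point to check is that $A$ is self-adjoint for $\langle\cdot,\cdot\rangle_{\ell^2(h\Z^N)}$: because $\phi_1,\phi_2$ vanish outside $\Omega$, one has $\langle A\phi_1,\phi_2\rangle=\langle -L_h\phi_1,\phi_2\rangle$ over the full lattice, and the symmetry $\omega(\beta,h)=\omega(-\beta,h)$ from \eqref{as:omega1} lets me shift the summation index $\alpha\mapsto\alpha-\beta$ to move $L_h$ onto $\phi_2$. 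The spectral theorem then provides a real orthonormal eigenbasis, and the Courant--Fischer/Rayleigh characterization identifies the smallest eigenvalue with the minimum of $\langle\phi,-L_h\phi\rangle$ over the unit sphere of $X_h(\Omega)$; this is exactly the claimed formula for $\lambda_{h,\Omega}$, and a minimizer is an eigenfunction for it.

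The central computation is the Dirichlet-form identity: using the same symmetrization,
\[
\langle\phi,-L_h\phi\rangle_{\ell^2(h\Z^N)}=\frac{h^N}{2}\sum_{\alpha\in\Z^N}\sum_{\beta\neq0}\bigl(\phi(x_\alpha)-\phi(x_\alpha+x_\beta)\bigr)^2\,\omega(\beta,h)\ge0,
\]
where all sums are effectively finite because $\phi$ is compactly supported and $\|\omega(\cdot,h)\|_{\ell^1}<\infty$. For strict positivity in \eqref{prop:eig-item1} I would argue that if this form vanishes, then $\phi(x_\alpha)=\phi(x_\alpha+x_\beta)$ for every $\beta$ with $\omega(\beta,h)>0$; hence $\phi$ is constant along the orbits of the subgroup $G\subset\Z^N$ generated by $\{\beta:\omega(\beta,h)>0\}$. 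Since \eqref{as:omega1} guarantees $\|\omega(\cdot,h)\|_{\ell^1}>0$, the group $G$ is nontrivial, hence infinite, so every orbit is infinite; a compactly supported function that is constant on an infinite set must vanish there, forcing $\phi\equiv0$. Thus the form is positive definite and $\lambda_{h,\Omega}>0$, using only \eqref{as:omega1}. To produce a nonnegative eigenfunction I would take any minimizer $\phi$ and pass to $|\phi|$: the pointwise bound $\bigl||a|-|b|\bigr|\le|a-b|$ shows the Dirichlet form does not increase while $\||\phi|\|=\|\phi\|=1$, so $|\phi|$ is again a minimizer, i.e. a nonnegative eigenfunction.

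For \eqref{prop:eig-item3} I would run a discrete strong maximum principle. Let $\phi\ge0$ be the eigenfunction from \eqref{prop:eig-item1} and suppose it vanishes at some interior point $x_{\alpha_0}\in\Omega$. Evaluating the eigenvalue equation there gives $0=\lambda_{h,\Omega}\phi(x_{\alpha_0})=-\sum_{\beta\neq0}\phi(x_{\alpha_0}+x_\beta)\,\omega(\beta,h)$, a sum of nonnegative terms, so $\phi(x_{\alpha_0}+x_\beta)=0$ whenever $\omega(\beta,h)>0$; by \eqref{as:omega2} this includes the nearest neighbours $x_{\alpha_0}\pm h e_i$. Iterating along nearest-neighbour paths inside $\Omega$ (points leaving $\Omega$ already carry the zero exterior condition) propagates the zero to every lattice point of $\Omega$ connected to $x_{\alpha_0}$, i.e. to all of $h\Z^N\cap\Omega$ once one uses that this set is connected through the nearest-neighbour edges supplied by \eqref{as:omega2}; this contradicts $\phi\not\equiv0$. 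Hence $\phi>0$ on $h\Z^N\cap\Omega$, and applying the same to $-\phi$ shows any eigenfunction has a strict sign. For simplicity, given two eigenfunctions $\phi_1,\phi_2$ for $\lambda_{h,\Omega}$ I would consider $\psi=\phi_2(x_{\alpha_0})\phi_1-\phi_1(x_{\alpha_0})\phi_2$ for a fixed $x_{\alpha_0}\in\Omega$: it is an eigenfunction vanishing at $x_{\alpha_0}$, so by the strict-sign statement $\psi\equiv0$, forcing $\phi_1,\phi_2$ to be proportional.

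The main obstacle — and the only place where \eqref{as:omega2} and the geometry of $\Omega$ really enter — is the propagation/connectivity step in \eqref{prop:eig-item3}: one must ensure the zero set spreads across the whole of $h\Z^N\cap\Omega$, which requires the interaction graph on the interior lattice points to be connected and a clean bookkeeping of interior nodes (where the equation holds) versus exterior nodes (where only the boundary condition holds). Equivalently, one can phrase everything through Perron--Frobenius: for $c$ large, $cI+L_h|_\Omega$ is a nonnegative matrix whose off-diagonal pattern is exactly this interaction graph, irreducibility of that matrix is precisely the connectivity one needs, and its Perron eigenvalue then yields both the strictly positive eigenfunction and simplicity in one stroke. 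I would keep the maximum-principle formulation in the write-up but rely on the Perron--Frobenius viewpoint to organize the connectivity argument.
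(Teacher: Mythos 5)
Your proposal follows essentially the same route as the paper for the variational characterization and part \eqref{prop:eig-item1}: finite-dimensional reduction on $X_h(\Omega)$, symmetry of $-L_h$ from \eqref{as:omega1}, the Rayleigh--Courant--Fischer characterization, the Dirichlet-form identity, and passage to $|\phi|$ to produce a nonnegative minimizer. Your positivity argument (positive-definiteness of the form: a null-form function is constant along the orbits of the group generated by $\{\beta:\omega(\beta,h)>0\}$, which are unbounded, contradicting compact support) is a clean variant of the paper's, which instead assumes $\lambda_{h,\Omega}=0$ and propagates the maximum of the eigenfunction along a single direction $\gamma$ with $\omega(\gamma,h)>0$ until it exits $\Omega$; both use only \eqref{as:omega1} and both are correct.

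The one genuine soft spot is in part \eqref{prop:eig-item3}. Your maximum principle shows that a nontrivial \emph{one-signed} eigenfunction cannot vanish in $\Omega$, but the sentence ``applying the same to $-\phi$ shows any eigenfunction has a strict sign'' does not exclude \emph{sign-changing} eigenfunctions, and your simplicity argument applies the strict-sign statement precisely to the combination $\psi=\phi_2(x_{\alpha_0})\phi_1-\phi_1(x_{\alpha_0})\phi_2$, which a priori may change sign; as written, that route is circular. The paper closes exactly this hole with an extra trick: if $\phi(x_\alpha)<0<\phi(x_\beta)$, then $\psi=\bigl|\phi-|\phi|\bigr|=|\phi|-\phi$ is a nonnegative, nontrivial eigenfunction (using that $|\phi|$ is again a minimizer, hence an eigenfunction) vanishing at $x_\beta$, contradicting the strict positivity of nonnegative eigenfunctions; simplicity then follows by dimension counting in the eigenspace. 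Your Perron--Frobenius alternative repairs the gap just as well, and in the opposite logical order: irreducibility of $cI+L_h|_\Omega$ gives simplicity of the Perron eigenvalue first, and strict sign of every first eigenfunction is then automatic because the eigenspace is one-dimensional and spanned by the positive Perron vector. So you should make Perron--Frobenius the actual backbone of (b), not merely an organizing remark, or else import the paper's $|\phi|-\phi$ argument. Finally, the connectivity caveat you flag (the interaction graph on $h\Z^N\cap\Omega$ must be connected, which \eqref{as:omega2} supplies only through nearest-neighbour edges lying in $\Omega$) is genuinely needed; note that the paper's own proof glides over it (``repeating we can show\dots''), so you are, if anything, more careful on this point, and for the domains where the proposition is later applied (balls) the required connectivity does hold.
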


\begin{proof}
Note that, since $\Omega$ is bounded, there exists a finite number of points $x_\alpha\in \Omega$. Let us label them by $x_{\alpha_i}$ for $i=1,\ldots,I$ for some $I>0$. Then, problem \eqref{eq:disceig} is equivalent to
\[
\phi(x_{\alpha_i})  \|\omega(\cdot,h)\|_{\ell^1(\Z^N)} -  \sum_{{j=1}\atop{j\not=i}}^{I} \phi(x_{\alpha_j}) \omega(\alpha_i-\alpha_j,h)= \lambda_{h,\Omega} \phi(x_{\alpha_i}) \quad \textup{for all} \quad i=1,\ldots,I.
\]
That is, we are looking for the eigenpairs of the matrix $M\in \R^{I\times I}$ given by
\begin{equation}\label{eq:Meig}
M_{i,j} =\left\{
\begin{split}
  \|\omega(\cdot,h)\|_{\ell^1(\Z^N)}  \quad &\textup{if} \quad i=j\\
  -\omega(\alpha_i-\alpha_j,h) \quad &\textup{if} \quad i\not=j.
\end{split}
\right.
\end{equation}
Note that, by symmetry of the weights, $M$ is symmetric, and the first eigenvalue is given by
$
\lambda_{h,\Omega}= \min_{\|w\|=1} \{w^T M w\}.
$
Moreover, given any $w\in \R^I$, define $\phi:h\Z^n\to \R$ by $\phi(x_{\alpha_i})=w_i$ for $i=1,\ldots,I$ and $\phi=0$ otherwise. Then
\[
\begin{split}
w^T M w =  \sum_{i=1}^I w_i (Mw)_i = \sum_{i=1}^I \phi(x_{\alpha_i}) (-L_h \phi(x_{\alpha_i})) = \sum_{x_\alpha\in h \Z^N}  \phi(x_{\alpha}) (-L_h \phi(x_{\alpha}))= \frac{\langle\phi,-L_h\phi\rangle_{ \ell^2(h\Z^N)}}{h^N},
\end{split}
\]
and $
w^T w = \sum_{x_\alpha\in h \Z^N}  \phi(x_{\alpha}) \phi(x_{\alpha}) =\|\phi\|_{ \ell^2(h\Z^N)}^2/h^N$.
Thus,
\[
\lambda_{h,\Omega}= \min_{\|w\|=1} \{w^T M w\}= \min_{w\not=0}\frac{ w^T M w}{w^T w}= \min_{{\phi\in X_h(\Omega)}\atop{\phi\not=0}}\frac{ \langle\phi,-L_h\phi\rangle_{ \ell^2(h\Z^N)}}{\|\phi\|_{ \ell^2(h\Z^N)}^2}= \min_{{\phi\in X_h(\Omega)}\atop{\|\phi\|_{ \ell^2(h\Z^N)}=1}} \langle\phi,-L_h\phi\rangle_{ \ell^2(h\Z^N)}.
\]
We will check now that $\lambda_{h,\Omega} \geq0$. This is a consequence of the fact that $M$ is weakly diagonally dominant, and the elements of the diagonal are nonnegative (i.e. $M$ is positive semi-definite). Thus $w^T M w\geq0$ for all $w\in \R^I$. Once this is established, we show now that $\lambda_{h,\Omega} >0$. Assume by contradiction that $\lambda_{h,\Omega}=0$  and let $\phi$ a nontrivial eigenvector.  Then, $L_h \phi (x_\alpha)=0$ for all $x_\alpha \in \Omega$. In particular, take $\overline{\alpha}$ such that
$
\phi(x_{\overline{\alpha}})= \max_{x_{\alpha}\in h\Z^N} \phi(x_\alpha)=:\mathcal{M}$.
Since $\|\omega(\cdot,h)\|_{\ell^1(\Z^N\setminus\{0\})}>0$, there exists $\gamma\not=0$ such that, $\omega(\gamma,h)>0$. Then,
\[
0 = L_h\phi(x_{\overline{\alpha}})= \sum_{\beta\in\Z^N\setminus\{0\}} \left(\phi(x_{\overline{\alpha}}+x_\beta)-\phi(x_{\overline{\alpha}})\right) \omega(\beta,h) \leq \left(\phi(x_{\overline{\alpha}}+x_\gamma)-\phi(x_{\overline{\alpha}})\right) \omega(\gamma,h) .
\]
In particular, this shows that $\phi(x_{\overline{\alpha}}+x_\gamma)=\mathcal{M}$. If $x_{\overline{\alpha}}+x_\gamma \in \R^N \setminus \Omega$, we have shown that $\mathcal{M}=0$. Otherwise, we iterate this argument $n\in \N$ times in such a way that $x_{\overline{\alpha}}+x_{n\gamma} \in \R^N \setminus \Omega$, and the same conclusion follows. In a similar way, we can conclude that $\min_{x_{\alpha}\in h\Z^N} \phi(x_\alpha)=0$, and thus $\phi=0$ in $h\Z^N$. This is a contradiction with the fact that $\|\phi\|_{\ell^2(h\Z^N)}=1$.

On the other hand, given any vector $w$ with $\|w\|=1$ we have that
\[
w^T M w= \sum_{i,j=1}^I w_i M_{i,j} w_j= \sum_{i=1}^I  M_{i,i} w_i^2- \sum_{{i,j=1}\atop{j\not=i}}^{I} w_i |M_{i,j}|w_j \geq \sum_{i=1}^I  M_{i,i} w_i^2- \sum_{{i,j=1}\atop{j\not=i}}^{I} |w_i| |M_{i,j}||w_j | = \tilde{w}^T M \tilde{w}
\]
with $\tilde{w}_i=|w_i|$. Thus \eqref{prop:eig-item1} holds.

Now we prove \eqref{prop:eig-item3}.
Assume, by contradiction, that there exists $x_\alpha\in \Omega$ such that a nontrivial nonnegative eigenfunction is such that $\phi(x_\alpha)=0$. Then,
\[
0= \lambda_{h,\Omega} \phi(x_\alpha) =\sum_{\beta\in\Z^N\setminus\{0\}} \left(\phi(x_\alpha+x_\beta)-\phi(x_\alpha)\right) \omega(\beta,h) = \sum_{\beta\in\Z^N\setminus\{0\}} \phi(x_\alpha+x_\beta) \omega(\beta,h).
\]
Since $\phi\geq0$ and $\omega(\beta,h)\geq0$, the above identity implies that $\phi(x_\alpha+x_\beta)=0$ for all $\beta\in \Z^N$ such that $\omega(\beta,h)>0$. In particular, by assumption \eqref{as:omega2}, $\omega(\pm e_i,h)>0$ for $i=1,\ldots,N$, and thus  $\phi(x_\alpha\pm e_i)=0$. Repeating we can show that $\phi(x_\alpha)=0$ for every $x_\alpha\in \Omega$, which is a contradiction with the fact that $\phi$ was a nontrivial eigenfunction. This implies that the nontrivial eigenfunction associated to $\lambda_{h,\Omega}$ is either strictly positive  or strictly negative in $ h\Z^N\cap \Omega$. Indeed, if we assume that there exist $x_{\alpha}, x_\beta\in\Omega$ such that $\phi(x_{\alpha})<0<\phi(x_{\beta})$, then the eigenfunction given by
$
\psi=| \phi-|\phi||
$
is nonnegative, nontrivial ($\psi(x_\alpha)>0$) and vanishes at $x_\beta$. This is a contradiction.

Finally, we prove that $\lambda_{h,\Omega}$ is simple. Assume that there are two linearly independent eigenvectors $v$ and $\tilde{v}$, then any $\overline{v}\in \textup{span}\{v,\tilde{v}\}$ is also an eigenvector. But since  $\textup{span}\{v,\tilde{v}\}$ is a subspace of dimension 2, then there must be some $\overline{v}\in \textup{span}\{v,\tilde{v}\}$ with components of different signs, which is a contradiction.
\end{proof}

\begin{remark}
Assumption \eqref{as:omega2} is necessary to prove \eqref{prop:eig-item3}. Indeed, in dimension $N=1$ we take $\omega(\pm 2,1)=1$  and $\omega(\alpha,1)=0$ otherwise and consider the eigenvalue problem for $\Omega=(-2,3)$, the associated matrix $M$ as in the proof above, and its eigenvectors
$$
M=\left(
\begin{smallmatrix}
2 &0& -1& 0\\
0 &2 &0 &-1\\
-1& 0& 2 &0\\
0 &-1& 0& 2
\end{smallmatrix}\right),\quad
v_1=\left(
\begin{smallmatrix}
1\\ 0\\ 1\\0
\end{smallmatrix}\right), \quad
v_2=\left(
\begin{smallmatrix}
0\\ 1\\ 0\\1
\end{smallmatrix}\right), \quad
v_3=\left(
\begin{smallmatrix}
-1\\ 0\\ 1\\0
\end{smallmatrix}\right) \quad  \textup{and} \quad
v_4=\left(
\begin{smallmatrix}
0\\ -1\\ 0\\1
\end{smallmatrix}\right)
$$
together with their correspondent positive eigenvalues
$\lambda_1=\lambda_2=1$, and $\lambda_3=\lambda_4=3$.
Note that $\lambda_1=1$ is a double eigenvalue and $v_1$ and $v_2$ are not strictly positive eigenvectors.
\end{remark}

\begin{remark}
Note that we can express the eigenvalue $\lambda_{h,\Omega}$ in the following equivalent forms:
\begin{equation}\label{eq:formeigdisc}
\begin{split}
\lambda_{h,\Omega}&= \min_{{\phi\in X_h(\Omega)}\atop{\|\phi\|_{\ell^2(h\Z^N)}=1}} \langle\phi,-L_h\phi\rangle_{ \ell^2(h\Z^N)}=\frac{h^N}{2} \min_{{\phi\in X_h(\Omega)}\atop{\|\phi\|_{\ell^2(h\Z^N)}=1}} \sum_{x_\alpha\in h \Z^n}  \sum_{x_\beta \in h\Z^N} (\phi(x_\alpha)-\phi(x_\beta))^2 \omega(\alpha-\beta,h)\\
&= \|\omega(\cdot,h)\|_{\ell^1(\Z^N)} - h^N \max_{{\phi\in X_h(\Omega)}\atop{\|\phi\|_{\ell^2(h\Z^N)}=1}} \sum_{x_\alpha\in \Omega} \sum_{x_\beta\in \Omega} \phi(x_\alpha)\phi(x_\beta) \omega (\alpha-\beta,h).
\end{split}
\end{equation}
\end{remark}

We will study now the behaviour of the first eigenvalue for dilation domains.

\begin{lemma}\label{lem:groeigRsecond}
Assume  \eqref{as:omega1}.
For $R>0$, define the set
$
\Omega_R:=R\Omega=\{Rx\in \R^N\, : \, x\in \Omega\}.
$
Then, for $R$ large enough, we have
\[
\lambda_{h,\Omega_R} \leq C M_2(h) R^{-2},
\]
where $M_2(h)$ is the second moment of the weights and $C$ is a constant independent  on $h$ and $R$.
\end{lemma}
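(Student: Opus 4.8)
The plan is to use the Rayleigh-quotient characterization of $\lambda_{h,\Omega_R}$ from Proposition \ref{prop:eig} together with a single, $R$-independent smooth profile rescaled onto the dilated domain. Fix once and for all a nonnegative, nontrivial $\psi\in C^\infty_c(\Omega)$ (which exists since $\Omega$ is a bounded open set) and define the test function $\phi_R(x_\alpha):=\psi(x_\alpha/R)$. Since $\operatorname{supp}\psi\subset\Omega$, we have $\phi_R(x_\alpha)=0$ whenever $x_\alpha\notin R\,\operatorname{supp}\psi\subset\Omega_R$, so $\phi_R\in X_h(\Omega_R)$ is admissible and $\lambda_{h,\Omega_R}\le \langle\phi_R,-L_h\phi_R\rangle_{\ell^2(h\Z^N)}\big/\|\phi_R\|_{\ell^2(h\Z^N)}^2$. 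The whole argument then reduces to estimating numerator and denominator separately. If $M_2(h)=+\infty$ the claim is trivial, so I may assume $M_2(h)<\infty$.

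For the numerator I would use the symmetric quadratic form in \eqref{eq:formeigdisc} and relabel $\gamma=\alpha-\beta$ to write
\[
\langle\phi_R,-L_h\phi_R\rangle_{\ell^2(h\Z^N)}=\frac{h^N}{2}\sum_{\gamma\neq0}\omega(\gamma,h)\sum_{\alpha}\big(\psi(x_\alpha/R)-\psi(x_{\alpha-\gamma}/R)\big)^2.
\]
Each inner difference is controlled by the fundamental theorem of calculus: writing $\psi(x_\alpha/R)-\psi(x_{\alpha-\gamma}/R)=\int_0^1\nabla\psi\big((x_\alpha-t x_\gamma)/R\big)\cdot(x_\gamma/R)\,dt$ and applying Cauchy--Schwarz yields
\[
\big(\psi(x_\alpha/R)-\psi(x_{\alpha-\gamma}/R)\big)^2\le \frac{|x_\gamma|^2}{R^2}\int_0^1 \big|\nabla\psi\big((x_\alpha-t x_\gamma)/R\big)\big|^2\,dt.
\]
Summing in $\alpha$, the inner sum is a Riemann sum of the fixed compactly supported function $|\nabla\psi|^2$ over the lattice of mesh $h/R$ shifted by $-tx_\gamma/R$, hence bounded by $C(R/h)^N\|\nabla\psi\|_{L^2}^2$ uniformly in $t$ and $\gamma$. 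Substituting back and recognizing $\sum_{\gamma}\omega(\gamma,h)|x_\gamma|^2=M_2(h)$ gives $\langle\phi_R,-L_h\phi_R\rangle_{\ell^2(h\Z^N)}\le C\,R^{N-2}\,M_2(h)\,\|\nabla\psi\|_{L^2}^2$.

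For the denominator I would use the analogous Riemann-sum lower bound $\|\phi_R\|_{\ell^2(h\Z^N)}^2=h^N\sum_\alpha\psi(x_\alpha/R)^2\ge \tfrac12 R^N\|\psi\|_{L^2}^2$, valid once $R/h$ is large. Dividing the two estimates produces $\lambda_{h,\Omega_R}\le C\,M_2(h)\,R^{-2}$ with $C=C(\psi,N)$ independent of both $h$ and $R$, which is the claim.

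The main obstacle will be making the two quadrature estimates uniform. The rescaling turns $h\Z^N$ into a lattice of mesh $h/R$ in the $\psi$-variable, so both approximations are accurate precisely when $R/h$ is large; this is exactly the content of the ``$R$ large enough'' hypothesis (the threshold may depend on $h$, but the resulting constant $C$ must not). The delicate point is the numerator sum, which runs over the \emph{shifted} lattice: one must verify that the bound $C(R/h)^N\|\nabla\psi\|_{L^2}^2$ on the lattice sum of $|\nabla\psi|^2$ holds uniformly over all shifts $tx_\gamma/R$ and all $\gamma\neq0$. The shift-independence is the essential feature that keeps $C$ from deteriorating as $\gamma$ ranges over the (possibly large) support of $\omega(\cdot,h)$, and it is precisely what lets the whole $h$-dependence collapse into the single factor $M_2(h)$.
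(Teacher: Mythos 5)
Your proof is correct and follows essentially the same route as the paper: plug the rescaled test function $\psi(\cdot/R)$ into the Rayleigh-quotient characterization \eqref{eq:formeigdisc}, factor the second moment $M_2(h)$ out of the quadratic form to produce the $R^{-2}$ decay, and control the remaining sums by Riemann-sum arguments valid once $R/h$ is large. The only substantive difference is in how the increments are bounded: the paper uses the pointwise Lipschitz estimate $|\psi(x_\alpha/R)-\psi(x_\beta/R)|\le L|x_\alpha-x_\beta|/R$, which reduces the numerator to counting lattice points in $\Omega_R$ and thus avoids the shift-uniform Riemann sums for $|\nabla\psi|^2$ that you correctly flag as the delicate point of your version (your constant is $\|\nabla\psi\|_{L^2}^2/\|\psi\|_{L^2}^2$, the paper's is $L^2|\Omega|/\|\psi\|_{L^2(\Omega)}^2$).
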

\begin{proof}
Consider a function $\psi \in C^1_c(\R^N)$ supported in $\Omega$ and let $L$ be the Lipschitz constant of $\psi$. Define the family of functions $\phi_R : h\Z^N\to \R$ given by
\[
\phi_R(x_\alpha)= \psi(x_\alpha/R)/\|\psi(\cdot/R)\|_{\ell^2(h\Z^N)}.
\]
Note that $\phi_R \in X_h(\Omega_R)$, $\|\phi_R(\cdot/R)\|_{\ell^2(h\Z^N)}=1$, and
$|\phi_R(x_\alpha)-\phi_R(x_\beta)| \leq \frac{L}{\|\psi(\cdot/R)\|_{\ell^2(h\Z^N)}}\frac{|x_\alpha-x_\beta|}{R}$.
Then, using the characterization of $\lambda_{h,\Omega_R}$ given by \eqref{eq:formeigdisc} and the symmetry of the weights, we get
\[
\begin{split}
\lambda_{h,\Omega_R} &\leq \frac{h^N}{2}\sum_{x_\alpha\in h \Z^N}  \sum_{x_\beta \in h\Z^N} (\phi_R(x_\alpha)-\phi_R(x_\beta))^2 \omega(\alpha-\beta,h)\\
&\leq h^N\sum_{x_\alpha\in \Omega_R\cap h \Z^N  }  \sum_{x_\beta \in h\Z^N} (\phi_R(x_\alpha)-\phi_R(x_\beta))^2 \omega(\alpha-\beta,h)\\
&\leq  \frac{M_2(h) L^2}{R^2}  \frac{1}{\|\psi(\cdot/R)\|_{\ell^2(h\Z^N)}^2}\sum_{x_\alpha\in \Omega_R\cap h \Z^N  } h^N \leq  \frac{M_2(h) L^2}{R^2} \frac{|\Omega|+ o_{\frac{h}{R}}(1)}{\|\psi\|^2_{L^2(\Omega)}+ o_{\frac{h}{R}}(1)},
\end{split}
\]
where $o_{\frac{h}{R}}(1) \to 0$ as $R\to+\infty$ for $h>0$ fixed. This concludes the proof.
\end{proof}

The following result is related to weights with fractional moments.
\begin{lemma}\label{lem:decayeigfrac}
Assume  \eqref{as:omega1}. Moreover, assume that
$
\omega(\alpha,h)\leq C h^N |x_\alpha|^{-N-2s}
$
for some $s\in(0,1)$ and some $C$ depending only on $N$ and $s$.
Then, for $R$ large enough, we have
$
\lambda_{h,\Omega_R} \leq \tilde{C} R^{-2s}
$
where $\tilde{C}$ is a constant depending only on $\Omega$, $N$ and $s$.
\end{lemma}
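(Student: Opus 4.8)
The plan is to mimic the proof of Lemma \ref{lem:groeigRsecond}, but to replace the single Lipschitz estimate (which there produces the second moment $M_2(h)$ and is useless now, since $M_2(h)$ need not be finite under the mere bound $\omega(\alpha,h)\le Ch^N|x_\alpha|^{-N-2s}$ with $s<1$) by a two-scale splitting of the energy. Concretely, I fix $\psi\in C^1_c(\R^N)$ supported in $\Omega$ with Lipschitz constant $L$, set $\phi_R(x_\alpha)=\psi(x_\alpha/R)/\|\psi(\cdot/R)\|_{\ell^2(h\Z^N)}$ so that $\phi_R\in X_h(\Omega_R)$ with $\|\phi_R\|_{\ell^2(h\Z^N)}=1$, and insert it into the variational characterization \eqref{eq:formeigdisc}. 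After the change of variables $\gamma=\alpha-\beta$ the energy reads $\frac{h^N}{2}\sum_{\gamma\ne0}\omega(\gamma,h)\sum_\alpha(\phi_R(x_\alpha)-\phi_R(x_\alpha-x_\gamma))^2$, and I split the outer sum into the \emph{near} regime $0<|x_\gamma|\le R$ and the \emph{far} regime $|x_\gamma|>R$.

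For the far regime I discard the cancellation and use $(\phi_R(x_\alpha)-\phi_R(x_\alpha-x_\gamma))^2\le 2\phi_R(x_\alpha)^2+2\phi_R(x_\alpha-x_\gamma)^2$; summing in $\alpha$ and using translation invariance of the lattice together with $\|\phi_R\|_{\ell^2(h\Z^N)}=1$ bounds the inner sum by $4h^{-N}$, so the far contribution is at most $2\sum_{|x_\gamma|>R}\omega(\gamma,h)\le Ch^N\sum_{|x_\gamma|>R}|x_\gamma|^{-N-2s}$, which compares to $\int_{|y|>R}|y|^{-N-2s}\,dy\sim R^{-2s}$ and gives the desired $CR^{-2s}$. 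For the near regime I keep the Lipschitz bound $(\phi_R(x_\alpha)-\phi_R(x_\alpha-x_\gamma))^2\le (L^2/R^2)|x_\gamma|^2/\|\psi(\cdot/R)\|^2_{\ell^2(h\Z^N)}$; the number $N_\gamma$ of indices $\alpha$ for which the summand is nonzero is at most twice the number of lattice points in $\Omega_R$, whence $h^N N_\gamma\le CR^N$, while the normalization satisfies $\|\psi(\cdot/R)\|^2_{\ell^2(h\Z^N)}\ge cR^N$ for $R$ large. Combining these, the near contribution is bounded by $\frac{C}{R^2}\sum_{0<|x_\gamma|\le R}\omega(\gamma,h)|x_\gamma|^2\le \frac{C}{R^2}h^N\sum_{0<|x_\gamma|\le R}|x_\gamma|^{2-N-2s}$, and since $s<1$ the function $|y|^{2-N-2s}$ is locally integrable, so this sum is comparable to $\int_{|y|\le R}|y|^{2-N-2s}\,dy\sim R^{2-2s}$, again yielding $CR^{-2s}$. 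Adding both pieces gives $\lambda_{h,\Omega_R}\le\tilde CR^{-2s}$.

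The choice of cutoff at scale $|x_\gamma|\sim R$ is dictated by balancing the two pieces: in terms of a cutoff $\rho$ the near energy grows like $\rho^{2-2s}/R^2$ and the far energy decays like $\rho^{-2s}$, and both equal $R^{-2s}$ precisely when $\rho\sim R$. The main obstacle I anticipate is not the splitting itself but making the two integral comparisons uniform in $h$, so that the final constant $\tilde C$ depends only on $\Omega$, $N$ and $s$: one must control the lattice counting $h^N\#\{x_\alpha\in\Omega_R\}$, the normalization $\|\psi(\cdot/R)\|^2_{\ell^2(h\Z^N)}$, and the two monotone Riemann sums $h^N\sum|x_\gamma|^{2-N-2s}$ and $h^N\sum|x_\gamma|^{-N-2s}$ by their continuous counterparts, with errors that are negligible once $R$ is large (for each fixed $h$), exactly as in the $o_{h/R}(1)$ bookkeeping of Lemma \ref{lem:groeigRsecond}. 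The local integrability near the origin of $|y|^{2-N-2s}$, which requires $s<1$, is what keeps the near piece finite and is the structural reason the exponent $2s$ (rather than $2$) appears.
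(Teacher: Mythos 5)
Your proof is correct, but it is organized quite differently from the paper's. The paper's argument hinges on the exact homogeneity of the model weights $h^N|x_\alpha|^{-N-2s}$: it first shows by a pure scaling argument that an eigenfunction on $\Omega$ with grid size $h/R$ rescales to an eigenfunction on $\Omega_R$ with grid size $h$, giving the identity $\lambda_{h,\Omega_R}=\lambda_{h/R,\Omega}\,R^{-2s}$; it then proves that $\lambda_{\tilde h,\Omega}$ is bounded uniformly in $\tilde h$ by a test-function computation, and finally passes from the model weights to general weights satisfying the upper bound via monotonicity of the quadratic form in \eqref{eq:formeigdisc}. You avoid the scaling step entirely and estimate the Rayleigh quotient of $\psi(\cdot/R)$ directly at scale $R$, splitting the form at $|x_\gamma|\sim R$: the Lipschitz bound handles the near regime (producing $R^{-2}\cdot R^{2-2s}$, finite precisely because $|y|^{2-N-2s}$ is locally integrable when $s<1$), and the crude $\ell^2$ bound handles the far regime (producing the tail $\sim R^{-2s}$). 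Notably, the same near/far dichotomy appears in the paper's uniform-boundedness step, where the test function is estimated by $\min\{\tilde h|\alpha-\beta|,1\}$ up to constants (the paper writes $\max$, which is evidently a typo, since the resulting moment $M_\alpha$ would be infinite; with $\min$ it is finite exactly by the two integrability properties you invoke) --- but there the splitting occurs at scale $1$ on the fixed domain rather than at scale $R$. What your route buys is a one-pass argument valid for any weights satisfying the pointwise upper bound, with no homogeneity used and no final comparison step; what the paper's route buys is a transparent explanation of the exponent $2s$ as a pure scaling factor, at the cost of needing the model weights as an intermediary. Your uniformity concerns (lattice counting, normalization, and the two Riemann-sum comparisons for $|y|^{-N-2s}$ and $|y|^{2-N-2s}$, all for fixed $h$ and large $R$) are exactly the $o_{h/R}(1)$ bookkeeping the paper itself performs in Lemma \ref{lem:groeigRsecond}, so nothing is missing there.
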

\begin{proof}
Let $L_h$ be defined by \eqref{eq:operator} with weights $\omega(\alpha,h)= h^N|x_\alpha|^{-N-2s}$ if $\alpha \not=0$ and $\omega(0,h)=0$. Clearly, assumptions \eqref{as:omega1} and \eqref{as:omega2} are satisfied in this case. Consider $\phi$ to be a solution of the eigenvalue problem \eqref{eq:disceig} with grid size $h/R$, i.e,
\begin{equation}
\left\{
\begin{split}
- L_{\frac{h}{R}} \phi ({\frac{h}{R}} \alpha) &= \lambda_{{\frac{h}{R}},\Omega} \phi({\frac{h}{R}}\alpha), \quad \textup{if} \quad {\frac{h}{R}}\alpha\in \Omega,\\
\phi ({\frac{h}{R}} \alpha) &= 0, \hspace{1.84cm} \textup{if} \quad {\frac{h}{R}}\alpha\in \R^N \setminus \Omega.
\end{split}\right.
\end{equation}
Now define $\tilde{\phi} : h\Z^N \to R$ given by $\tilde{\phi}(h\alpha)= \phi ({\frac{h}{R}} \alpha)$. Clearly, $\tilde{\phi}(h \alpha)=0$ if $h \alpha \in \R^N \setminus \Omega_R$, and
\[
\begin{split}
L_h \tilde{\phi}(h \alpha) &=  \sum_{\beta \not=0} \Big(\tilde{\phi}(h\alpha+h\beta)-\tilde{\phi}(h\alpha)\Big) \frac{h^N}{|h\alpha|^{N+2s}} = \frac{1}{R^{2s}}  \sum_{\beta \not=0} \Big(\phi({\frac{h}{R}}\alpha+{\frac{h}{R}}\beta)-\phi({\frac{h}{R}}\alpha)\Big) \frac{1}{|{\frac{h}{R}}\alpha|^{N+2s}}\\
&= - \frac{\lambda_{{\frac{h}{R}},\Omega}}{R^{2s}} \phi({\frac{h}{R}}\alpha)=- \frac{\lambda_{{\frac{h}{R}},\Omega}}{R^{2s}} \tilde{\phi}(h \alpha),
\end{split}
\]
for all $h\alpha \in \Omega_R$. Thus, $\lambda_{h,\Omega_R}=\frac{\lambda_{{\frac{h}{R}},\Omega}}{R^{2s}}$. It remains to show  that $\lambda_{{\frac{h}{R}},\Omega}$ is uniformly bounded, which will conclude the proof.
For simplicity, let us denote by $\tilde{h}=h/R$. Take $\psi$ as in the proof of Lemma \ref{lem:groeigRsecond} and let
\[
\tilde{\psi}(\tilde{h}\alpha)= \psi(\tilde{h}\alpha)/\|\psi\|_{\ell^2(\tilde{h}\Z^N)}.
\]
Note that
$
|\tilde{\psi}(\tilde{h}\alpha)-\tilde{\psi}(\tilde{h}\beta)| \leq C \frac{L}{\|\psi\|_{\ell^2(\tilde{h}\Z^N)}}\max\{\tilde{h}|\alpha-\beta|,1\}.
$
with $C=\max\left\{\|\psi\|_{L^\infty(\R^N)},1\right\}$. Denote by $\tilde{x}_\alpha=\tilde{h}\alpha$ for all $\alpha \in \R^N$. Then,
\[
\begin{split}
\lambda_{{\frac{h}{R}},\Omega}&\leq \tilde{h}^N\sum_{\tilde{x}_\alpha\in \Omega\cap \tilde{h} \Z^N  }  \sum_{\tilde{x}_\beta \in \tilde{h}\Z^N} (\tilde{\psi}(\tilde{h}\alpha)-\tilde{\psi}(\tilde{h}\beta))^2 \omega(\alpha-\beta,\tilde{h})
\leq  \frac{M_\alpha C^2 L^2}{\|\psi\|_{\ell^2(\tilde{h}\Z^N)}^2}  \sum_{\tilde{x}_\alpha\in \Omega\cap \tilde{h} \Z^N  } \tilde{h}^N\\
& \leq M_\alpha C^2 L^2 \frac{|\Omega|+ o_{\tilde{h}}(1)}{\|\psi\|^2_{L^2(\Omega)}+ o_{\tilde{h}}(1)},
\end{split}
\]
where $M_\alpha=\sup_{h>0} \sum_{\alpha\not=0} \max\{|x_\alpha|^2,1\} \frac{h^N}{|x_\alpha|^{N+2s}}$. This finishes the proof for the special choice $\omega(\alpha,h)= h^N|x_\alpha|^{-N-2s}$. When $\omega(\alpha,h)\leq h^N|x_\alpha|^{-N-2s}$, let us denote the corresponding eigenvalue by $\overline{\lambda}_{h,\Omega_R}$. It is standard to check, from characterization \eqref{eq:formeigdisc}, that $\overline{\lambda}_{h,\Omega_R} \leq C\lambda_{h,\Omega_R} $ and thus, the proof follows using the previous result.
\end{proof}

In order to formulate and prove the following result we need to use the extra assumption \eqref{as:omega3} that requires the weights being radially nonincreasing.

\begin{theorem}\label{thm:rear}
Assume \eqref{as:omega1} and \eqref{as:omega3}. Let $\Omega=B_r$, the ball of radius $r>0$ centered at the origin. Then, any positive eigenfunction (solution of \eqref{eq:disceig}) satisfies that $\phi(0)=\|\phi\|_{l^\infty(h\Z^N)}$.
\end{theorem}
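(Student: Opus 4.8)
The plan is to prove a stronger statement—that $\phi$ is symmetric and nonincreasing along each coordinate axis—from which $\phi(0)=\|\phi\|_{\ell^\infty(h\Z^N)}$ is immediate. I would first record two reductions. Assumptions \eqref{as:omega1} and \eqref{as:omega3} together force $\omega(e_i,h)>0$ (the smallest nonzero radius $|x_\alpha|=h$ carries the largest weight, and the total weight is positive), so \eqref{as:omega2} holds and Proposition \ref{prop:eig}\eqref{prop:eig-item3} applies: $\lambda_{h,B_r}$ is simple with a strictly positive eigenfunction. Moreover, any positive eigenfunction must be this principal one, since eigenspaces of the symmetric matrix $M$ in \eqref{eq:Meig} associated to distinct eigenvalues are orthogonal, while a positive function cannot be $\ell^2$-orthogonal to the (positive) principal eigenfunction. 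Thus it suffices to treat the principal eigenfunction $\phi$, which by \eqref{eq:formeigdisc} is, up to normalization, the unique nonnegative maximizer of $B(\psi):=\sum_{x_\alpha,x_\beta}\psi(x_\alpha)\psi(x_\beta)\,\omega(\alpha-\beta,h)$ over $\psi\in X_h(B_r)$ with $\|\psi\|_{\ell^2(h\Z^N)}=1$.

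The engine of the proof is a polarization (two-point rearrangement) with respect to lattice hyperplanes $H_t=\{x_1=t\}$ with $t\in\tfrac{h}{2}+h\Z$, and analogously for the other coordinates. Orienting $H_t$ so that its near halfspace $H_t^-$ contains the origin, I define the polarized function $\phi^{H_t}$ by placing the larger of $\{\phi(x),\phi(\sigma_t x)\}$ on the near side, where $\sigma_t$ is the reflection $x_1\mapsto 2t-x_1$. Three facts must be checked. First, $\sigma_t$ preserves $h\Z^N$ (as $2t\in h\Z$) and, because $B_r$ is centered at the origin, $\sigma_t$ moves near-side points farther from the origin and far-side points closer, so $\phi^{H_t}$ stays supported in $B_r$, i.e. $\phi^{H_t}\in X_h(B_r)$. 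Second, polarization only permutes values, so $\|\phi^{H_t}\|_{\ell^2(h\Z^N)}=\|\phi\|_{\ell^2(h\Z^N)}$. Third—and this is the heart of the matter—$B(\phi^{H_t})\ge B(\phi)$.

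For the key inequality I would decompose $B$ into contributions from pairs of reflection orbits $\{x,\sigma_t x\}$ and $\{y,\sigma_t y\}$, choosing representatives $x,y\in H_t^-$ (for $t\notin h\Z$ every orbit has exactly one such representative and none lie on $H_t$). Within a single orbit the contribution is unchanged, since the product $\phi(x)\phi(\sigma_t x)$ and the weight are invariant and the diagonal weight vanishes. For two distinct orbits, writing $p$ for the weight at the same-side separation $|x-y|=|\sigma_t x-\sigma_t y|$ and $q$ for the cross separation $|x-\sigma_t y|=|\sigma_t x-y|$, the elementary identity $|\sigma_t x-y|\ge|x-y|$ for $x,y\in H_t^-$ yields $p\ge q$ through the radial monotonicity \eqref{as:omega3}; the two-point rearrangement inequality $\max\cdot\max+\min\cdot\min\ge$ (any other pairing) then shows the change in the contribution equals $(p-q)\,\Delta$ with $\Delta\ge0$. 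Summing over orbit pairs gives $B(\phi^{H_t})\ge B(\phi)$. Since $\phi$ is the unique nonnegative maximizer of $B$ and $\phi^{H_t}$ is nonnegative with the same norm and $B(\phi^{H_t})\ge B(\phi)=\max B$, simplicity forces $\phi^{H_t}=\phi$. Applying this for every $t\in\tfrac{h}{2}+h\Z$ (both signs) and every coordinate gives $\phi(x)\ge\phi(\sigma_t x)$ for all near-side $x$, which chains into the monotonicity $\phi(0,x')\ge\phi(h,x')\ge\phi(2h,x')\ge\cdots$ in each coordinate; symmetry under sign flips is available directly from simplicity, as coordinate reflections are symmetries of both $B_r$ and $\omega$. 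Lowering each coordinate of an arbitrary $x_\alpha$ to zero one at a time then yields $\phi(x_\alpha)\le\phi(0)$.

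The main obstacle is the polarization inequality $B(\phi^{H_t})\ge B(\phi)$, and within it the verification that polarizing \emph{toward} the origin is the correct orientation: this is precisely where the geometry of the ball (reflections across $H_t$ with $0\in H_t^-$ move points toward the origin) meets the radial monotonicity of the weights, via the estimate $|\sigma_t x-y|\ge|x-y|$ on the near side. The remaining ingredients—lattice compatibility of $\sigma_t$, invariance of the support $B_r$, and preservation of the $\ell^2$ norm—are routine bookkeeping.
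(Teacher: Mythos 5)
Your proposal is correct, but it follows a genuinely different route from the paper. The paper's proof is a short reduction to cited machinery: it takes the discrete Schwarz rearrangement $\phi^*$ of $\phi$ (which peaks at the origin by construction), invokes the discrete Riesz rearrangement inequality (Theorem 5.7 in \cite{HHH}) to get $\sum_{x_\alpha}\sum_{x_\beta}\phi(x_\alpha)\phi(x_\beta)\omega(\alpha-\beta,h)\le\sum_{x_\alpha}\sum_{x_\beta}\phi^*(x_\alpha)\phi^*(x_\beta)\omega(\alpha-\beta,h)$, and then---exactly as you do---uses the variational characterization \eqref{eq:formeigdisc} together with simplicity of $\lambda_{h,\Omega}$ (Proposition \ref{prop:eig}) to force $\phi^*=\phi$. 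You replace the black-box Riesz inequality by an elementary polarization across the hyperplanes $H_t$, $t\in\tfrac h2+h\Z$, and your verification of the key inequality $B(\phi^{H_t})\ge B(\phi)$ is sound: the orbit decomposition (diagonal weights vanish since $\omega(0,h)=0$), the identity ``change $=(p-q)\Delta$'' with $\Delta=\max\cdot\max+\min\cdot\min-(\text{original pairing})\ge0$, and the geometric estimate $|\sigma_t x-y|\ge|x-y|$ for $x,y$ on the near side (which is where \eqref{as:omega3} enters) are all correct, as are the bookkeeping facts ($2t\in h\Z$ so $\sigma_t$ preserves the lattice, reflection to the far side increases $|x|$ so the support stays in $B_r$, and the $\ell^2$ norm is preserved). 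Your two preliminary reductions are also valid and slightly extend the paper: the implication \eqref{as:omega1}$+$\eqref{as:omega3}$\Rightarrow$\eqref{as:omega2} is precisely the paper's opening observation, while your orthogonality argument (a positive eigenfunction cannot be $\ell^2$-orthogonal to the positive principal one) justifies the phrase ``any positive eigenfunction,'' which the paper implicitly restricts to the principal eigenvalue. As for what each approach buys: the paper's proof is much shorter, at the cost of resting on a nontrivial discrete Riesz inequality for lattice graphs; yours is self-contained, needs only two-point inequalities (which are robust in the lattice setting, where full rearrangement inequalities are delicate), and yields a stronger structural conclusion---coordinate-wise monotonicity of $\phi$ away from the origin---from which $\phi(0)=\|\phi\|_{\ell^\infty(h\Z^N)}$ follows at once.
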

\begin{proof}
The proof is based on the properties of discrete Schwarz rearrangements, see \cite{P,HHH}. Note that \eqref{as:omega3} implies \eqref{as:omega2}, thus, we can take $\phi$ to be a strictly positive
eigenfunction in $\Omega$, and denote by $\phi^*$ its discrete Schwarz rearrangement. By construction,  $ \phi^*(0)=\|\phi^*\|_{l^\infty(h\Z^N)}$ and $\phi^*(x_j)=0$ for $|x_j|\ge r$. Since
$\|\phi^*\|_{l^2(h\Z^N)}=\|\phi\|_{l^2(h\Z^N)}$, and according to discrete Riesz inequality (Theorem 5.7 in \cite{HHH}) we have
$$
\begin{array}{rl}
\displaystyle
\sum_{x_\alpha\in \Omega}\sum_{x_\beta\in \Omega} \phi(x_\alpha)\phi(x_\beta) \omega (\alpha-\beta,h) \le
\displaystyle\sum_{x_\alpha\in \Omega}\sum_{x_\beta\in \Omega}  \phi^*(x_\alpha)\phi^*(x_\beta) \omega (\alpha-\beta,h) .
\end{array}
$$
Thus, thanks to the variational characterization of the first eigenvalue \eqref{eq:formeigdisc}, we get that $\phi^*$ is an eigenfunction. Finally, the simplicity of the first eigenvalue implies $\phi^*=\phi$ and the result follows.
\end{proof}

\begin{lemma}\label{lem:limrenomeig}
Let the assumptions of Theorem \ref{thm:rear}. Let also the assumptions of either Lemma \ref{lem:groeigRsecond} or  Lemma \ref{lem:decayeigfrac} hold. Consider the sequence of functions
\[
\psi_R= \phi_{h,\Omega_R}/\|\phi_{h,\Omega_R}\|_{\ell^\infty(h \Z^N)}.
\]
where $\phi_{h,\Omega_R}$ is an eigenfunction associated to $\lambda_{h,\Omega_R}$.
Then, up to a subsequence, $\psi_R \to 1$ weakly-* in $\ell^\infty(h\Z^N)$ as $R\to+\infty$.
\end{lemma}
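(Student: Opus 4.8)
The plan is to normalize, extract a weak-$*$ limit, pass to the limit in the rescaled eigenvalue equation using that $\lambda_{h,\Omega_R}\to0$, and then invoke a discrete strong maximum principle to identify the limit as the constant $1$.

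First I would record the properties of $\psi_R$. Since $\phi_{h,\Omega_R}$ can be taken strictly positive (Proposition~\ref{prop:eig}~\eqref{prop:eig-item3}, using that \eqref{as:omega3} implies \eqref{as:omega2}) and $\Omega_R=B_{rR}$ is a ball, Theorem~\ref{thm:rear} gives $\phi_{h,\Omega_R}(0)=\|\phi_{h,\Omega_R}\|_{\ell^\infty(h\Z^N)}$, so after normalization $0\le \psi_R\le 1$, $\psi_R(0)=1$, and $\|\psi_R\|_{\ell^\infty(h\Z^N)}=1$. As the predual $\ell^1(h\Z^N)$ is separable, boundedness in $\ell^\infty(h\Z^N)$ yields a weak-$*$ convergent subsequence $\psi_R\rightharpoonup^{*}\psi_\infty$ with $0\le\psi_\infty\le1$. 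The key observation is that each point evaluation $f\mapsto f(x_\alpha)$ is the pairing against a standard basis vector of $\ell^1(h\Z^N)$, so weak-$*$ convergence is here equivalent to pointwise convergence; in particular $\psi_\infty(0)=1$.

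Next I would pass to the limit in the equation. Dividing the eigenvalue equation by the $\ell^\infty$ norm gives $-L_h\psi_R(x_\alpha)=\lambda_{h,\Omega_R}\psi_R(x_\alpha)$ for $x_\alpha\in\Omega_R$, and since $B_{rR}$ exhausts $\R^N$, any fixed grid point eventually lies in $\Omega_R$. By Lemma~\ref{lem:groeigRsecond} (respectively Lemma~\ref{lem:decayeigfrac}) we have $\lambda_{h,\Omega_R}\to0$, so the right-hand side tends to $0$; on the left, the summand $(\psi_R(x_\alpha+x_\beta)-\psi_R(x_\alpha))\omega(\beta,h)$ converges pointwise and is dominated by $2\omega(\beta,h)\in\ell^1(\Z^N\setminus\{0\})$ by \eqref{as:omega1}, so dominated convergence in $\beta$ gives $L_h\psi_R(x_\alpha)\to L_h\psi_\infty(x_\alpha)$. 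Hence $L_h\psi_\infty\equiv0$ on $h\Z^N$: the limit is a bounded, nonnegative, $L_h$-harmonic function whose maximum, equal to $1$, is attained at the origin.

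Finally I would invoke the discrete strong maximum principle exactly as in the proof of Proposition~\ref{prop:eig}. Evaluating $L_h\psi_\infty(0)=0$ and using $\psi_\infty\le 1=\psi_\infty(0)$, every term $(\psi_\infty(x_\beta)-1)\omega(\beta,h)\le0$ must vanish, so $\psi_\infty(x_\beta)=1$ whenever $\omega(\beta,h)>0$; in particular $\psi_\infty(\pm h e_i)=1$ by \eqref{as:omega2}. Iterating this propagation along the coordinate directions reaches every point of the connected grid $h\Z^N$, giving $\psi_\infty\equiv1$. Since the limit is independent of the extracted subsequence, the full sequence converges. The main obstacle is conceptual rather than computational: one must combine Theorem~\ref{thm:rear} (to keep the maximum at the origin, preventing the mass of $\psi_R$ from escaping to infinity and the limit from degenerating to zero) with the decay $\lambda_{h,\Omega_R}\to0$, and carefully justify exchanging the limit with the infinite sum defining $L_h$; once the harmonic limit is obtained with maximum $1$ at the origin, the maximum principle forces it to be the constant $1$.
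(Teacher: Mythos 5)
Your proof is correct, and its overall skeleton (normalize, extract a weak-$*$/pointwise limit, pass to the limit in the eigenvalue equation using $\lambda_{h,\Omega_R}\to 0$ from Lemma~\ref{lem:groeigRsecond} or Lemma~\ref{lem:decayeigfrac}, then identify the $L_h$-harmonic limit via $\psi_\infty(0)=1$ from Theorem~\ref{thm:rear}) coincides with the paper's. The one genuine difference is the identification step: the paper invokes the Liouville theorem for bounded functions satisfying $L_h\psi=0$ (citing \cite{AdEJ}) to conclude that the limit is constant, and only then uses $\psi(0)=1$ to fix the value of that constant; you instead avoid any external result by running a discrete strong maximum principle, exactly as in the proof of Proposition~\ref{prop:eig}: since the maximum value $1$ of $\psi_\infty$ is \emph{attained} at the origin, $L_h\psi_\infty(0)=0$ forces $\psi_\infty=1$ at every neighbor with positive weight, and \eqref{as:omega2} (which follows from \eqref{as:omega1} and \eqref{as:omega3}) lets you propagate the value $1$ along coordinate directions to all of $h\Z^N$. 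Your route is more elementary and self-contained, but it leans harder on Theorem~\ref{thm:rear}: without knowing that the supremum is attained at a grid point, the maximum principle argument could not start, whereas the Liouville theorem yields constancy for any bounded harmonic limit. Two further points are handled at least as carefully in your write-up as in the paper's: you justify the interchange of limit and infinite sum by dominated convergence with majorant $2\omega(\cdot,h)\in\ell^1(\Z^N\setminus\{0\})$ (the paper's appeal to $\omega(\cdot,h)\in\ell^\infty$ is the weaker-looking statement; the clean alternative is to pair $\psi_R$ directly against the $\ell^1$ function $\omega(\cdot-\alpha,h)$, which is what weak-$*$ convergence is for), and your closing sub-subsequence argument upgrading to convergence of the full sequence is valid, though not required by the statement, which only asks for convergence up to a subsequence.
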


\begin{proof}
Since $\|\psi_R\|_{\ell^\infty(h \Z^N)}=1$ for all $R>0$, there exists a subsequence $\psi_{R_k}$ that converges weakly-* in $\ell^\infty(h\Z^N)$ to some $\psi \in \ell^\infty(h \Z^N)$. In particular, this implies also pointwise convergence in $ \ell^\infty(h \Z^N)$. Since $\omega(\cdot,h)\in\ell^\infty(h\Z^N)$, we can take limits as $R\to \infty$ in \eqref{eq:disceig} and use the bounds in either Lemma \ref{lem:groeigRsecond} or  Lemma \ref{lem:decayeigfrac} to show that
$
\sum_{\beta\in\Z^N\setminus\{0\}} \left(\psi(x_\alpha+x_\beta)-\psi(x_\alpha)\right) \omega(\beta,h)=0$ for all $ x_\alpha \in h \Z^N$.
Since $\psi$ is bounded, by the Liouville theorem (see e.g. \cite{AdEJ}) we have that $\psi$ must be constant in $h \Z^N$. Moreover, Lemma \ref{thm:rear} implies that $\psi_R(0)=1$ for all $R$. By pointwise convergence, $\psi(0)=1$ and thus $\psi=1$ in $h\Z^N$.
\end{proof}

\section{Asymptotic properties of fully discrete diffusion equations}\label{comportamiento-asintotico-calor}

Fix $\tau>0$ and let $t_j=\tau j \in \tau \N$. For a function $z\in \ell^\infty(h\Z^N \times \tau \N)$ we consider the forward finite difference discrete derivative given by
\[
\partial_\tau z(x_\alpha,t_j) = \frac{z(x_\alpha,t_{j+1})-z(x_\alpha,t_{j})}{\tau} \quad \textup{for all} \quad j=0,1,\ldots
\]
In this section, we will study the asymptotic behaviour of the solution of the following explicit finite difference problem:
\begin{equation}
\label{ecuacion lineal}
\left\{
\begin{split}
\partial_{\tau} z(x_\alpha,t_j)-L_h z(x_\alpha,t_j) &= 0  \hspace{0.74cm}\quad \textup{if} \quad x_\alpha\in h \Z^N,\, t_j \in \tau \N, \\
z (x_\alpha,0) &= \varphi(x_\alpha)  \quad \textup{if} \quad x_\alpha\in h\Z^N,
\end{split}\right.
\end{equation}
for some $\varphi\in \ell^\infty(h\Z^N)$. To ensure good stability properties of the solution, we will always assume the usual CFL-type condition  given by
\begin{equation}\label{CFL2}\tag{CFL2}
\tau \leq\frac{1}{4\sum_{\alpha \in \Z^N\setminus\{0\}} \omega(\alpha,h)}.
\end{equation}

The main tool in our proofs will be the Semidiscrete Fourier Transform, see \cite{Tr,Ignat}. For any $v\in \ell^2(h\Z^N)$, it is defined by
$$
\mathcal{F}_h [v](\xi)=h^N \sum_{x_\alpha\in h\mathbb Z^N} v(x_\alpha) e^{-i\xi\cdot x_\alpha}, \qquad \xi\in Q_h:=\left[-\frac\pi{h},\frac\pi{h}\right]^N,
$$
and its inverse by
$$
v(x_\alpha) =\frac1{(2\pi)^N}\int_{Q_h} \mathcal{F}_h [v](\xi) e^{i  \xi\cdot x_\alpha} d\xi,\qquad x_\alpha\in h\mathbb Z^N.
$$

In the next two lemmas we are going to collect some properties of the Semidiscrete Fourier Transform, see \cite{Tr}. First, the classical convolution property.
\begin{lemma}\label{lem:convtoprod}
If $v\in \ell^2(h\mathbb Z^N)$ and $w\in \ell^1(h\mathbb Z^N)$, then $v*w\in \ell^2(h\mathbb Z^N)$ and
$$
\mathcal{F}_h [v*w](\xi)=\mathcal{F}_h [v](\xi) \mathcal{F}_h [w](\xi).
$$
where $*$ denotes the discrete convolution $ [v*w](x_\alpha)=h^N  \sum_{\beta\in \Z^N} v(x_\alpha-x_\beta) w(x_\beta)$.
\end{lemma}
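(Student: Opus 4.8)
The plan is to split the statement into two parts: first establishing that $v*w\in \ell^2(h\Z^N)$ via a discrete Young-type estimate, and then verifying the product formula, where the genuine subtlety lies in justifying an interchange of summations.

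For the summability, I would view $[v*w](x_\alpha)=h^N\sum_{\beta\in\Z^N} w(x_\beta)\, v(x_\alpha-x_\beta)$ as an $\ell^1(h\Z^N)$-weighted superposition of translates of $v$ and apply Minkowski's inequality for the $\ell^2(h\Z^N)$-norm. Since the discrete $\ell^2$-norm is translation invariant, i.e. $\|v(\cdot-x_\beta)\|_{\ell^2(h\Z^N)}=\|v\|_{\ell^2(h\Z^N)}$ for every $\beta$, this yields
$$
\|v*w\|_{\ell^2(h\Z^N)} \le h^N\sum_{\beta\in\Z^N}|w(x_\beta)|\,\|v(\cdot-x_\beta)\|_{\ell^2(h\Z^N)}=\|w\|_{\ell^1(h\Z^N)}\,\|v\|_{\ell^2(h\Z^N)}<\infty,
$$
so that $v*w\in\ell^2(h\Z^N)$ as claimed.

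For the product formula, the natural computation is to insert the definition of the convolution into $\mathcal{F}_h[v*w]$, factor the exponential as $e^{-i\xi\cdot x_\alpha}=e^{-i\xi\cdot(x_\alpha-x_\beta)}e^{-i\xi\cdot x_\beta}$, interchange the two sums, and apply the change of index $\gamma=\alpha-\beta$ (a bijection on $\Z^N$) so that the inner sum produces $\mathcal{F}_h[v](\xi)$ and the outer one produces $\mathcal{F}_h[w](\xi)$. The hard part is that this interchange cannot be justified by Fubini/Tonelli for a general $v\in\ell^2(h\Z^N)$: absolute summability of the double series would require $\sum_\beta|w(x_\beta)|\sum_\gamma|v(x_\gamma)|<\infty$, that is $v\in\ell^1$, which need not hold since $\ell^2\not\subset\ell^1$.

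I would resolve this by a density argument. For $v\in\ell^1(h\Z^N)\cap\ell^2(h\Z^N)$ the double series is absolutely convergent, so Fubini applies and the identity $\mathcal{F}_h[v*w]=\mathcal{F}_h[v]\,\mathcal{F}_h[w]$ holds pointwise on $Q_h$. Given a general $v\in\ell^2(h\Z^N)$, choose finitely supported $v_n\to v$ in $\ell^2(h\Z^N)$; then by the first part $\|v*w-v_n*w\|_{\ell^2(h\Z^N)}\le \|v-v_n\|_{\ell^2(h\Z^N)}\|w\|_{\ell^1(h\Z^N)}\to0$, so by the Plancherel identity for the semidiscrete Fourier transform $\mathcal{F}_h[v_n*w]\to\mathcal{F}_h[v*w]$ in $L^2(Q_h)$. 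On the other hand $\mathcal{F}_h[v_n]\to\mathcal{F}_h[v]$ in $L^2(Q_h)$ by Plancherel, while $\mathcal{F}_h[w]\in C(Q_h)\cap L^\infty(Q_h)$ with $\|\mathcal{F}_h[w]\|_{L^\infty(Q_h)}\le\|w\|_{\ell^1(h\Z^N)}$ because $w\in\ell^1(h\Z^N)$; hence $\mathcal{F}_h[v_n]\,\mathcal{F}_h[w]\to\mathcal{F}_h[v]\,\mathcal{F}_h[w]$ in $L^2(Q_h)$. Passing to the limit in the identity valid for each $v_n$ gives $\mathcal{F}_h[v*w]=\mathcal{F}_h[v]\,\mathcal{F}_h[w]$ as an identity in $L^2(Q_h)$, which is the correct sense in which $\mathcal{F}_h$ acts on $\ell^2(h\Z^N)$. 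The main obstacle throughout is precisely this $\ell^2$-versus-$\ell^1$ integrability gap; everything else is routine bookkeeping of the exponential factors and the index shift.
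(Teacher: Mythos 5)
Your proof is correct, but note that the paper does not actually prove this lemma: it is stated as a known property of the semidiscrete Fourier transform, quoted from the reference \cite{Tr}, so there is no internal argument to compare against. Your proposal supplies a complete, self-contained proof, and both halves are sound: the Minkowski/Young estimate $\|v*w\|_{\ell^2(h\Z^N)}\le \|w\|_{\ell^1(h\Z^N)}\|v\|_{\ell^2(h\Z^N)}$ is the standard one (the convolution is even pointwise well defined since $\ell^2(h\Z^N)\subset\ell^\infty(h\Z^N)$), and you correctly identify the only genuine subtlety, namely that Fubini alone does not justify the interchange of sums when $v\in\ell^2\setminus\ell^1$. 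Your fix—prove the identity for finitely supported (or $\ell^1\cap\ell^2$) functions, then pass to the limit using the convolution estimate, Plancherel for $\mathcal{F}_h$, and the bound $\|\mathcal{F}_h[w]\|_{L^\infty(Q_h)}\le\|w\|_{\ell^1(h\Z^N)}$—is the standard density argument, and your caveat that the resulting identity holds in the $L^2(Q_h)$ (a.e.) sense is the honest formulation, since $\mathcal{F}_h$ on $\ell^2$ is itself only defined as an $L^2$ limit. Worth noting: in the only place the paper uses this lemma (the computation leading to the symbol of $L_h$), both arguments are in $\ell^1(h\Z^N)$ — the weights $a$ by \eqref{as:omega1} and the solution $z(\cdot,t_j)$ since $\varphi\in\ell^1$ and $\ell^1(h\Z^N)\subset\ell^2(h\Z^N)$ for discrete spaces — so there the simple Fubini computation already suffices; your extra work is what buys the lemma in the generality in which it is stated.
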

The following result states properties of the Semidiscrete Fourier when applied to restrictions of $\R^N$ functions to a grid.

\begin{lemma} \label{lema fourier}
Let $f\in C^\infty_c(\mathbb R^N)$. For $v: h\Z^N\to \R$ defined by $v(x_\alpha)=f(x_\alpha)$ (the restriction of $f$ to the grid $h\mathbb Z^N$) we have that
\[
\sup_{\xi\in Q_h} \left| \mathcal{F}_h [v](\xi)-\widehat f(\xi)\right|=O(h^M) \quad \textup{as $h\to 0$ for all } M>0,
\]
where $\widehat f$ denotes the continuous Fourier transform of $f$. Moreover, for $\rho>0$, define  $w: h\Z^N\to \R$  by $w(x_\alpha)= f(\rho x_\alpha)$ and $\phi: \rho h \Z^N \to \R$ by $\phi(\rho h \alpha)= f(\rho h \alpha)$ for $\alpha \in \Z^N$. Then
$$
\mathcal{F}_h [w](\xi) =\rho^{-N} \mathcal{F}_{\rho h} [\phi](\xi/\rho) \quad \textup{for all} \quad \xi\in Q_h.
$$
\end{lemma}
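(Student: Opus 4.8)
The plan is to treat the two assertions separately, since they rest on different mechanisms. The scaling identity is a purely formal change of variables in the definition of the transform, whereas the approximation estimate is a quantitative aliasing statement that I would derive from the Poisson summation formula together with the rapid decay of $\widehat f$.

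For the first assertion, the key observation is that the semidiscrete transform of a restricted function is exactly the periodization of the continuous transform. Applying the Poisson summation formula on the lattice $h\Z^N$ (with dual lattice $\frac{2\pi}{h}\Z^N$, in the $e^{-i\xi\cdot x}$ convention used here) to the function $x\mapsto f(x)e^{-i\xi\cdot x}$ yields the aliasing identity
\[
\mathcal{F}_h[v](\xi)=h^N\sum_{\alpha\in\Z^N} f(h\alpha)e^{-i\xi\cdot h\alpha}=\sum_{k\in\Z^N}\widehat f\Big(\xi+\tfrac{2\pi k}{h}\Big),
\]
so that the $k=0$ term is $\widehat f(\xi)$ and
\[
\mathcal{F}_h[v](\xi)-\widehat f(\xi)=\sum_{k\in\Z^N\setminus\{0\}}\widehat f\Big(\xi+\tfrac{2\pi k}{h}\Big).
\]
I would then bound this tail uniformly on $Q_h$. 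For $\xi\in Q_h$ and $k\neq0$ the elementary inequality $|\xi+\tfrac{2\pi k}{h}|_\infty\ge \tfrac{\pi}{h}|k|_\infty$ holds, and since $f\in C_c^\infty(\R^N)$ its transform $\widehat f$ is Schwartz, so for any $M'>N$ there is $C_{M'}$ with $|\widehat f(\eta)|\le C_{M'}(1+|\eta|)^{-M'}$. Summing, for $h$ small,
\[
\Big|\sum_{k\neq0}\widehat f\Big(\xi+\tfrac{2\pi k}{h}\Big)\Big|\le C_{M'}\sum_{k\neq0}\Big(\tfrac{\pi}{h}|k|_\infty\Big)^{-M'}=C_{M'}\Big(\tfrac{h}{\pi}\Big)^{M'}\sum_{k\neq0}|k|_\infty^{-M'},
\]
and the last sum converges whenever $M'>N$. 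Since $M'$ may be taken arbitrarily large, this gives $\sup_{\xi\in Q_h}|\mathcal{F}_h[v](\xi)-\widehat f(\xi)|=O(h^M)$ for every $M>0$.

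For the scaling identity I would simply substitute into the definitions. Using $w(x_\alpha)=f(\rho h\alpha)$ and $\phi(\rho h\alpha)=f(\rho h\alpha)$,
\[
\mathcal{F}_{\rho h}[\phi](\xi/\rho)=(\rho h)^N\sum_{\alpha\in\Z^N} f(\rho h\alpha)e^{-i(\xi/\rho)\cdot\rho h\alpha}=\rho^N\, h^N\sum_{\alpha\in\Z^N} f(\rho h\alpha)e^{-i\xi\cdot h\alpha}=\rho^N\,\mathcal{F}_h[w](\xi),
\]
and dividing by $\rho^N$ gives the stated formula; moreover $\xi\in Q_h=[-\tfrac{\pi}{h},\tfrac{\pi}{h}]^N$ corresponds to $\xi/\rho\in Q_{\rho h}$, so both sides are well defined.

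The main obstacle is the first part, specifically the correct formulation of Poisson summation in the paper's $2\pi$-free convention and the verification that the aliasing tail is summable. Once the aliasing identity is in place the estimate is immediate from the Schwartz decay of $\widehat f$; the only care needed is the lower bound $|\xi+\tfrac{2\pi k}{h}|_\infty\ge \tfrac{\pi}{h}|k|_\infty$ on $Q_h$, which guarantees that the shifted arguments escape to infinity at rate $h^{-1}$ as $h\to0^+$ and thereby produces the arbitrarily high order of convergence.
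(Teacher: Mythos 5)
Your proof is correct. The paper itself gives no proof of this lemma --- it only points to the reference \cite{Tr} --- and your argument (the Poisson summation/aliasing identity $\mathcal{F}_h[v](\xi)=\sum_{k\in\Z^N}\widehat f(\xi+2\pi k/h)$ combined with the Schwartz decay of $\widehat f$ to kill the nonzero-frequency tail, plus a direct change of variables for the scaling identity) is exactly the standard argument behind that citation, so it is essentially the same approach.
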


With the property given in Lemma \ref{lem:convtoprod}, we can apply the Semidiscrete Fourier Transform to \eqref{ecuacion lineal} and get asymptotic properties. For notational simplicity, and since $h$ will be fixed in this sections, let us define $a: h\Z^N \to \R$ be defined through the weights $\omega$ by
\[
a(x_\alpha)= h^{-N} \omega(\alpha,h).
\]
Then,
\begin{equation}\label{enfourier}
\begin{split}
\partial_\tau (\mathcal{F}_h[ z])(\xi, t_j) &= \mathcal{F}_h[\partial_\tau z](\xi, t_j) = \mathcal{F}_h[ L_h z](\xi, t_j) \\
&= \textstyle h^{N} \sum_{\alpha \in \Z^N} \big(\sum_{\beta\in \Z^N} \left(z(x_\alpha+x_\beta,t_j)-z(x_\alpha,t_j)\right) h^N a(x_\beta) \big) e^{-i \xi \cdot x_\alpha}\\
&=  \mathcal{F}_h[z(\cdot,t_j)*a](\xi) - \|a\|_{\ell^1(h\Z^N)} \mathcal{F}_h[z](\xi,t_j)= (\mathcal{F}_h[a](\xi) -\mathcal{F}_h[a](0))\mathcal{F}_h[z](\xi,t_j).
\end{split}
\end{equation}
where we have used the fact that $\|a\|_{\ell^1(h\Z^N)}=\mathcal{F}_h[a](0)$. Note that $\mathcal{F}_h[a](\xi) -\mathcal{F}_h[a](0)$ is precisely the Fourier symbol of $L_h$.

As in the continuous problem $u_t=J*u-u$, the asymptotic behaviour depends on the behaviour of the symbol of the diffusion operator near the origin, see \cite{ChaCharro}. See also \cite{Ignat,Ignat-Rossi} for the semidiscrete-in-space equation
$u_t(x_\alpha,t)=a*u(x_\alpha,t) -\|a\|_{1} u(x_\alpha,t)$.
\subsection{Properties of the Fourier symbol of the difussion operator}\label{sec:symfour}
As we showed in the previous section, the Fourier symbol of the operator $L_h$ is given by
\begin{equation}\label{symbol}
m(\xi)=\mathcal F_h [a](\xi)-\mathcal F_h [a] (0).
\end{equation}
A more explicit expression of the symbol $m$ can be obtained by direct computations using the symmetry of the weights:
\begin{equation}\label{eq:weight}
m(\xi)= - h^N\sum_{\beta\in \Z^N}  a(x_\beta) (1-\cos(\xi\cdot x_\beta)).
\end{equation}
In order to prove many of the results of this paper, we will require the Fourier symbol to have one of the following properties:

\begin{equation}\label{eq:bddabove}\tag{S$_{1}$}
\exists  K>0 \quad \textup{and} \quad s\in(0,1] \quad \textup{such that} \quad m(\xi)\leq - K|\xi|^{2s}\quad \textup{for all} \quad \xi\in Q_h.
\end{equation}
\begin{equation}\label{eq:bddabovebis}\tag{S$_{2}$}
\exists K_1>0 \quad \textup{and} \quad s\in(0,1] \quad \textup{such that} \quad   -K_1|\xi|^{2s}\leq m(\xi) \quad \textup{for all} \quad \xi\in Q_h.
\end{equation}
Let us observe
\begin{equation}\label{propsimbol}
m\in L^\infty(Q_h), \mbox{ and }
m(0)=0, \, m(\xi)<0 \mbox{ otherwise}.
\end{equation}
Therefore, to obtain the previous bound it is enough to study the behaviour of the symbol near the origin.
The following result gives necessary conditions on the weight function for the symbol to satisfy one of the above properties.

\begin{lemma}\label{lem:simbolp}
Assume \eqref{as:omega1}.
\begin{enumerate}[\rm (a)]
\item\label{simbolp-item-3} Assuming the lower bound in \eqref{as:weight4},  $
\omega(\alpha,h)\geq C h^N |x_\alpha|^{-N-2s}$ for some $s\in(0,1]$, then \eqref{eq:bddabove} holds.
\item\label{simbolp-item-2} If the weight function satisfies the upper bound in \eqref{as:weight4},   $
\omega(\alpha,h)\leq C h^N |x_\alpha|^{-N-2s}$ for some $s\in(0,1)$, then \eqref{eq:bddabovebis} holds.
\item\label{simbolp-item-1} Assuming \eqref{as:weight5} holds, i.e., the weight function has a finite a second moment, then there exists  $K>0$ such that
$
\lim_{\xi\to 0} \frac{m(\xi)}{ |\xi|^{2}} =-K.
$
Therefore \eqref{eq:bddabove} and \eqref{eq:bddabovebis} hold for $s=1$.
\end{enumerate}
\end{lemma}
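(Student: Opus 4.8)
The common starting point is the symmetric representation \eqref{eq:weight}, which, since $h^N a(x_\beta)=\omega(\beta,h)$, I rewrite as
\[
-m(\xi)=\sum_{\beta\in\Z^N}\omega(\beta,h)\bigl(1-\cos(\xi\cdot x_\beta)\bigr)\ge 0,
\]
together with the elementary inequalities $1-\cos\theta\le\tfrac12\theta^2$ (all $\theta$) and $1-\cos\theta\ge c_0\theta^2$ ($|\theta|\le\pi$). Because every summand is nonnegative, I may split the sum over annuli in $x_\beta$ and discard terms whenever a lower bound is sought. By \eqref{propsimbol}, $m<0$ on the compact set $Q_h\setminus\{0\}$, so each claimed two-sided estimate reduces to the behaviour of $-m(\xi)$ as $\xi\to0$: away from the origin the quotient $-m(\xi)/|\xi|^{2s}$ is automatically pinched between two positive constants by continuity and compactness. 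The real content is thus the small-$\xi$ asymptotics, whose dominant contribution comes from the scale $|x_\beta|\sim 1/|\xi|$, and which I compare with the continuous symbol $\int_{\R^N}(1-\cos(\xi\cdot y))|y|^{-N-2s}\,dy=c_{N,s}|\xi|^{2s}$ for $s<1$, resp.\ with the quadratic form $\tfrac12\sum_\beta\omega(\beta,h)(\xi\cdot x_\beta)^2$ for $s=1$.

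For \eqref{simbolp-item-1} I take $s=1$ and use \eqref{as:weight5}. The upper estimate is immediate: $1-\cos\theta\le\tfrac12\theta^2$ and Cauchy--Schwarz give $-m(\xi)\le\tfrac12\sum_\beta\omega(\beta,h)(\xi\cdot x_\beta)^2\le\tfrac12 M_2(h)|\xi|^2$, i.e.\ \eqref{eq:bddabovebis} with $s=1$. For the limit I write $-m(\xi)/|\xi|^2=\sum_\beta\omega(\beta,h)\frac{1-\cos(\xi\cdot x_\beta)}{|\xi|^2}$; each term tends to $\tfrac12\omega(\beta,h)(\hat\xi\cdot x_\beta)^2$ with $\hat\xi=\xi/|\xi|$ and is dominated by the summable sequence $\tfrac12\omega(\beta,h)|x_\beta|^2$, so dominated convergence yields, along any fixed direction, the limit $Q(\hat\xi):=\tfrac12\sum_\beta\omega(\beta,h)(\hat\xi\cdot x_\beta)^2$; the evenness in \eqref{as:omega1} rules out a lower-order (linear) term. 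The second-moment matrix $\sum_\beta\omega(\beta,h)(x_\beta)_i(x_\beta)_j$ is positive definite once some axial weight is positive, so $0<K\le Q(\hat\xi)\le K_1<\infty$ on the unit sphere, giving both \eqref{eq:bddabove} and \eqref{eq:bddabovebis}; when the weights carry the full lattice symmetry (coordinate permutations and sign changes), as in all the examples of Remark \ref{ejemplos}, this matrix is a scalar multiple of the identity and $Q$ reduces to the single constant $K$ of the statement.

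For \eqref{simbolp-item-2} (upper bound, $s\in(0,1)$) I fix $\xi\ne0$, use $\omega(\beta,h)\le C_2h^N|x_\beta|^{-N-2s}$ where \eqref{as:weight4} applies (say $|x_\beta|>R_0$), and split at the scale $1/|\xi|$. On $R_0<|x_\beta|\le 1/|\xi|$ I insert $1-\cos\le\tfrac12|\xi|^2|x_\beta|^2$ and compare the resulting Riemann sum with $|\xi|^2\int_{R_0<|y|<1/|\xi|}|y|^{2-N-2s}\,dy$, whose radial part $\int_{R_0}^{1/|\xi|} r^{1-2s}\,dr$ grows like $|\xi|^{2s-2}$ precisely because $s<1$, so that the prefactor $|\xi|^2$ turns it into $|\xi|^{2s}$; on $|x_\beta|>1/|\xi|$ I use $1-\cos\le 2$ and compare with $\int_{|y|>1/|\xi|}|y|^{-N-2s}\,dy\sim|\xi|^{2s}$. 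All discrete-to-integral comparisons exploit the monotonicity of $r\mapsto r^{-N-2s}$ on annuli, so the constants depend only on $N$, $s$, $C_2$. The bounded core $|x_\beta|\le R_0$ contributes at most $\min\{2,\tfrac12|\xi|^2|x_\beta|^2\}\,\omega(\beta,h)$ over finitely many $\beta$, which is absorbed into $|\xi|^{2s}$ after distinguishing $|\xi|\le1$ and $|\xi|\ge1$.

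Part \eqref{simbolp-item-3} is the delicate one and where I expect the main difficulty. Using $\omega(\beta,h)\ge C_1h^N|x_\beta|^{-N-2s}$ and discarding the nonnegative core terms, $-m(\xi)\ge C_1h^N\sum_{|x_\beta|\ge R_0}|x_\beta|^{-N-2s}(1-\cos(\xi\cdot x_\beta))$. For a genuine lower bound I restrict, for $|\xi|$ small enough that $1/|\xi|\ge R_0$, to the annulus $1/|\xi|\le|x_\beta|\le 2/|\xi|$, where $|x_\beta|^{-N-2s}\gtrsim|\xi|^{N+2s}$ and where a fixed positive proportion of the points (those $x_\beta$ roughly aligned with $\xi$, for which $\xi\cdot x_\beta$ is of order one) satisfies $1-\cos(\xi\cdot x_\beta)\ge c_0$; since that good sub-region contains $\gtrsim |\xi|^{-N}h^{-N}$ lattice points, I obtain $-m(\xi)\gtrsim h^N\,|\xi|^{N+2s}\,h^{-N}|\xi|^{-N}=c\,|\xi|^{2s}$, which is \eqref{eq:bddabove}. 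The obstacle is twofold. First, the counting of ``good'' points in the annulus must be made uniform in the direction of $\xi$ and robust to the lattice geometry, which is the heaviest elementary estimate. Second, if one insists on constants independent of $h$, the coarse-grid regime $|\xi|\sim 1/h$ has to be treated separately, since then the annulus resolves only $O(1)$ lattice points and the counting argument degenerates; there one argues instead that finitely many nearby cosines cannot simultaneously equal $1$ --- exactly the mechanism behind $m(\xi)<0$ in \eqref{propsimbol}, which rests on \eqref{as:omega2} --- so that $-m(\xi)$ stays comparable to $\|\omega(\cdot,h)\|_{\ell^1}\sim h^{-2s}\sim|\xi|^{2s}$.
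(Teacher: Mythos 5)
Your proposal is correct and follows essentially the same route as the paper's proof: for \eqref{simbolp-item-2} the same split of $-m(\xi)=\sum_\beta\omega(\beta,h)(1-\cos(\xi\cdot x_\beta))$ at the scale $|x_\beta|\sim 1/|\xi|$ with the bounds $1-\cos\theta\le\min\{2,\theta^2/2\}$ and Riemann-sum comparison with $\int r^{-1-2s}\,dr$ and $\int r^{1-2s}\,dr$; for \eqref{simbolp-item-3} the same annulus-plus-cone restriction aligned with $\xi$ (the paper uses the annulus $\tfrac12<|x_\beta||\xi|<1$ intersected with the hyper-quadrant having $\xi$ as diagonal, and phrases your lattice-point count as a comparison with $\int_{1/(2|\xi|)}^{1/|\xi|}r^{1-2s}\,dr$), with the region $|\xi|$ bounded away from $0$ handled via \eqref{propsimbol} and compactness of $Q_h$; and for \eqref{simbolp-item-1} a second-moment argument. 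The one substantive refinement on your side is in \eqref{simbolp-item-1}: the paper Taylor-expands and asserts that $D^2m(0)$ is a scalar multiple of the identity ``by symmetry of the weights,'' which the central symmetry in \eqref{as:omega1} alone does not justify (e.g.\ weights supported on $\pm(1,1)$ give a nonzero off-diagonal second moment), whereas your dominated-convergence argument correctly identifies the limit along rays as the direction-dependent quadratic form $Q(\hat\xi)$, observes that the two-sided bounds \eqref{eq:bddabove}--\eqref{eq:bddabovebis} with $s=1$ only need positive definiteness of the second-moment matrix (which requires the axial positivity \eqref{as:omega2}, implicitly used by the paper as well), and notes that full lattice symmetry is what collapses $Q$ to the single constant $K$ of the statement.
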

\begin{proof}
Let us prove \eqref{simbolp-item-1}. Given $x\in \R^N$, let us denote by $x_j$ to its $j$-th component for $j=1,\ldots,N$. Then, from \eqref{eq:weight}, we get
$$
\begin{array}{l}
\displaystyle(D m)_j (\xi)=- h^N \sum_{\beta\in\mathbb Z^N} a(x_\beta) (x_\beta)_j \sin(\xi \cdot x_\beta),\, \, \textup{and} \, \ \displaystyle(D^2 m)_{ij}(\xi)=- h^N \sum_{\beta \in \mathbb Z^N} a(x_\beta) (x_\beta)_i (x_\beta)_j \cos(\xi \cdot x_\beta).
\end{array}
$$
Clearly $(D m)_j (0)=0$. Moreover, by symmetry of the weights, $(D^2 m)_{ij}(0)=0$ when $i\ne j$ and $(D^2 m)_{ii}(0)>0$. Hence, the Hessian matrix of $m(\xi)$ at the origin is a diagonal matrix given by
$$
D^2 m(0)=-\Big( \frac{h^N}{N} \sum_{\beta \in \mathbb \Z^N} a(x_\beta) |x_\beta|^2 \Big)\cdot \mbox{Id}=
\frac{-M_2(h)}{N} \cdot \mbox{Id}.
$$
Then the Taylor expansion of $m(\xi)$ gives the desired result.

We prove now \eqref{simbolp-item-2}. Let us observe that
$$
-m(\xi)\le C \bigg(
h^N \sum_{|x_\beta||\xi|\le1} \frac{1-\cos(x_\beta\cdot\xi)}{|x_\beta|^{N+2s}} +
h^N \sum_{|x_\beta||\xi|>1} \frac{1-\cos(x_\beta\cdot\xi)}{|x_\beta|^{N+2s}}
\bigg)=S_1+S_2.
$$
Since, $1-\cos(z)\le 2$ we get
$$
S_2\le 2 h^N \sum_{|x_\beta||\xi|>1} |x_\beta|^{-N-2s} \le 2 \int_{\frac1{|\xi|}}^\infty r^{-1-2s} dr = \frac1{s} |\xi|^{2s}.
$$
On the other hand, we also have the bound $1-\cos(z)\le z^2/2$, then
$$
S_1\le \frac{|\xi|^2}{2} h^N \sum_{|x_\beta||\xi|\le 1} |x_\beta|^{2-N-2s} \le
\frac{|\xi|^2}{2} \int_0^{\frac1{|\xi|}} r^{1-2s} dr =\frac{1}{2(2-2s)} |\xi|^{2s}.
$$

Finally, we prove \eqref{simbolp-item-3}. Applying Taylor expansion, there exists $c_1>0$ such that $1-\cos(z)>c_1 z^2$ in $|z|<1$. Thus, taking $|\xi|$ small enough such that $(x_\beta\cdot\xi)\le |x_\beta||\xi|<1$ and $|x_\beta|>1/(2|\xi|)>k_0$,  we have
$$
\sum_{\beta \in  Z^N} a(x_\beta) (1-\cos(x_\beta\cdot\xi))\ge C
\sum_{\frac12<|x_\beta ||\xi|<1 } |x_\beta|^{-N-2s} (x_\beta \cdot\xi)^2.
$$
If we now restrict $x_\beta$ to the hyper-quadrant $H$ with $\xi$ as diagonal, then $\cos(x_\beta\cdot \xi)> 1/\sqrt{N}$. Define $\Omega=\{x_\alpha\in H\,:\, \frac12<|x_\alpha||\xi|<1 \}$, and note that
$$
\begin{array}{rl}
\displaystyle h^N\sum_{\frac12<|x_\beta ||\xi|<1 } |x_\beta|^{-N-2s} (x_\beta \cdot\xi)^2\ge &
\displaystyle |\xi|^2 \frac{h^N}{\sqrt N} \sum_{x_\beta \in \Omega } |x_\beta|^{2-N-2s}
\ge \displaystyle
C|\xi|^2 \int_{ \frac1{2|\xi|}}^{\frac1{|\xi|}} r^{1-2s}dr =C|\xi|^{2s}.
\end{array}
$$
The desired result follows.
\end{proof}
\subsection{Asymptotic decay (Proof of Theorem \ref{comportamiento lineal})}

To obtain an upper bound of the decay of the solution of \eqref{ecuacion lineal} we only need an upper estimate of the symbol $m$ given in \eqref{eq:bddabove}. Thanks to Lemma \ref{lem:simbolp} we obtain that in term of the behaviour of the weights at infinity.
Therefore  \ref{comportamiento lineal} is equivalent to the following Theorem.

\begin{theorem}\label{convergencia-pb-lineal}
Assume \eqref{as:omega1} and \eqref{CFL2}. Let $z$ be the solution of \eqref{ecuacion lineal} with initial datum $\varphi\in \ell^1(h\Z^N)$. If \eqref{eq:bddabove} holds, then
\begin{equation}\label{eq:asbeha}
    \limsup_{j\to\infty} \,t_j^{N/{2s}} \sup_{\alpha \in \Z^N}|z(x_\alpha,t_j)-\|\varphi\|_{\ell^1(h\Z^N)}\Gamma_s(x_\alpha,t_j)|\le C,
\end{equation}
where $\Gamma_s$ is the fundamental solution of the fractional heat equation (the heat equation if $s=1$)
$
(\Gamma_s)_t=-K (-\Delta)^{s} \Gamma_s,
$
where $K$ is the constant in \eqref{eq:bddabove}.
\end{theorem}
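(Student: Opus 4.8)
The plan is to diagonalise the scheme with the Semidiscrete Fourier Transform and reduce everything to pointwise information on the symbol $m$. Applying $\mathcal{F}_h$ to \eqref{ecuacion lineal} exactly as in \eqref{enfourier} turns the evolution into the scalar recursion $\mathcal{F}_h[z](\xi,t_{j+1})=(1+\tau m(\xi))\mathcal{F}_h[z](\xi,t_j)$, so that $\mathcal{F}_h[z](\xi,t_j)=(1+\tau m(\xi))^j\mathcal{F}_h[\varphi](\xi)$ with $t_j=\tau j$. From \eqref{eq:weight} and $0\le 1-\cos\le 2$ one has $-2\|\omega(\cdot,h)\|_{\ell^1(\Z^N\setminus\{0\})}\le m(\xi)\le 0$, which under \eqref{CFL2} gives $0\le 1+\tau m(\xi)\le 1$. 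Combining this with the elementary $1+x\le e^x$ and the hypothesis \eqref{eq:bddabove} yields the single inequality on which the whole argument rests,
\[
0\le (1+\tau m(\xi))^j\le e^{t_j m(\xi)}\le e^{-K t_j|\xi|^{2s}},\qquad \xi\in Q_h.
\]
Inverting gives $z(x_\alpha,t_j)=(2\pi)^{-N}\int_{Q_h}(1+\tau m(\xi))^j\mathcal{F}_h[\varphi](\xi)e^{i\xi\cdot x_\alpha}\,d\xi$, while the kernel of $(\Gamma_s)_t=-K(-\Delta)^s\Gamma_s$ admits the continuous representation $\Gamma_s(x,t)=(2\pi)^{-N}\int_{\R^N}e^{-Kt|\xi|^{2s}}e^{i\xi\cdot x}\,d\xi$.

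Next I would set $\mathcal{M}:=\mathcal{F}_h[\varphi](0)=h^N\sum_\alpha\varphi(x_\alpha)$ (equal to $\|\varphi\|_{\ell^1(h\Z^N)}$ for the nonnegative data relevant here), subtract the two representations, and split $\int_{\R^N}=\int_{Q_h}+\int_{\R^N\setminus Q_h}$. Since $|e^{i\xi\cdot x_\alpha}|=1$, taking the supremum over $\alpha$ bounds $\sup_\alpha|z(x_\alpha,t_j)-\mathcal{M}\Gamma_s(x_\alpha,t_j)|$ by $(2\pi)^{-N}(I_1+I_2+I_3)$, where
\[
I_1=\int_{Q_h}(1+\tau m(\xi))^j\big|\mathcal{F}_h[\varphi](\xi)-\mathcal{M}\big|\,d\xi,\qquad
I_2=|\mathcal{M}|\int_{Q_h}\big|(1+\tau m(\xi))^j-e^{-Kt_j|\xi|^{2s}}\big|\,d\xi,
\]
and $I_3=|\mathcal{M}|\int_{\R^N\setminus Q_h}e^{-Kt_j|\xi|^{2s}}\,d\xi$. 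I then multiply by $t_j^{N/2s}$ and estimate each term after the rescaling $\eta=t_j^{1/2s}\xi$, which converts $t_j^{N/2s}e^{-Kt_j|\xi|^{2s}}\,d\xi$ into the fixed integrable profile $e^{-K|\eta|^{2s}}\,d\eta$.

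The three pieces behave as follows. For $I_2$, the displayed inequality shows that both $(1+\tau m(\xi))^j$ and $e^{-Kt_j|\xi|^{2s}}$ lie in $[0,e^{-Kt_j|\xi|^{2s}}]$, so the integrand is dominated by $e^{-Kt_j|\xi|^{2s}}$ and $t_j^{N/2s}I_2\le |\mathcal{M}|\int_{\R^N}e^{-K|\eta|^{2s}}\,d\eta$ uniformly in $j$; this is precisely the term that produces the constant $C$ rather than $0$, reflecting that only the \emph{one-sided} bound $m\le -K|\xi|^{2s}$ is available, so $z$ decays at least as fast as $\mathcal{M}\Gamma_s$ but possibly faster. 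For $I_1$ I bound $(1+\tau m(\xi))^j\le e^{-Kt_j|\xi|^{2s}}$ and rescale; the resulting integrand $e^{-K|\eta|^{2s}}\big|\mathcal{F}_h[\varphi](t_j^{-1/2s}\eta)-\mathcal{M}\big|$ is dominated by $2\|\varphi\|_{\ell^1(h\Z^N)}e^{-K|\eta|^{2s}}$ and tends to $0$ pointwise since $\mathcal{F}_h[\varphi]$ is continuous with $\mathcal{F}_h[\varphi](0)=\mathcal{M}$, so dominated convergence gives $t_j^{N/2s}I_1\to0$. For $I_3$, rescaling shows $t_j^{N/2s}I_3\le|\mathcal{M}|\int_{|\eta|\ge t_j^{1/2s}\pi/h}e^{-K|\eta|^{2s}}\,d\eta\to0$ exponentially. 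Passing to $\limsup_{j\to\infty}$ kills $I_1$ and $I_3$ and leaves only the $I_2$ bound, giving \eqref{eq:asbeha} with $C=(2\pi)^{-N}|\mathcal{M}|\int_{\R^N}e^{-K|\eta|^{2s}}\,d\eta$.

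The main obstacle I anticipate is the $I_1$ term: with only $\varphi\in\ell^1(h\Z^N)$ there is no quantitative modulus of continuity for $\mathcal{F}_h[\varphi]$ at the origin, so one cannot extract a rate and must argue purely by dominated convergence, which is why the clean statement is a $\limsup\le C$ bound rather than an $o(1)$ or a matching-constant asymptotic. Everything else is structural: the decisive input is the one-sided symbol estimate \eqref{eq:bddabove}, furnished by Lemma \ref{lem:simbolp}, whose combination with $1+x\le e^x$ produces the Gaussian-type domination that controls all three integrals simultaneously; reconciling the compact frequency cell $Q_h$ of the discrete transform with the full-space integral defining $\Gamma_s$ (the term $I_3$) is then routine once the rescaling is in place. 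Finally, the ``in particular'' decay estimate follows at once, since dropping $I_1,I_3$ already gives $\sup_\alpha|z(x_\alpha,t_j)|\le (2\pi)^{-N}\|\varphi\|_{\ell^1(h\Z^N)}\int_{\R^N}e^{-Kt_j|\xi|^{2s}}\,d\xi=\tilde{C}\,t_j^{-N/2s}$.
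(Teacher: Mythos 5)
Your proof is correct and follows essentially the same route as the paper: semidiscrete Fourier diagonalization giving $\mathcal{F}_h[z](\xi,t_j)=(1+\tau m(\xi))^j\mathcal{F}_h[\varphi](\xi)$, a triangle-inequality split into a propagator-difference term (which produces the constant $C$), a term controlled by continuity of $\mathcal{F}_h[\varphi]$ at the origin, and a frequency tail over $\R^N\setminus Q_h$, each estimated via the rescaling $\eta=t_j^{1/2s}\xi$ and dominated convergence. The only cosmetic difference is that you dominate the propagator difference by $e^{-Kt_j|\xi|^{2s}}$ uniformly on all of $Q_h$ through $0\le(1+\tau m(\xi))^j\le e^{t_j m(\xi)}$, which streamlines the paper's splitting of $Q_h$ into $B_\varepsilon(0)$ and its complement.
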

\begin{proof} By \eqref{enfourier} and \eqref{symbol}, we get that
\[
(\mathcal{F}_h[ z])(\xi, t_j)=(1+\tau m(\xi))^j\mathcal{F}_h[ \varphi](\xi),
\quad \textup{i.e.,} \quad z(x_\alpha,t_j)=\frac1{(2\pi)^N}\int_{Q_h}  \Big(1+\tau m(\xi) \Big)^j \mathcal{F}_h[ \varphi](\xi) e^{i x_\alpha \cdot \xi} d\xi.
\]
On the other hand, it is well known that
$$
\Gamma_{s}(x_\alpha,t_j) =\frac1{(2\pi)^N}\int_{\mathbb R^N} e^{-K|\xi|^{2s} t_j} e^{ix_\alpha\cdot\xi}d\xi.
$$
Thus, $e_{\alpha}^j=(2\pi)^N |z(x_\alpha,t_j)-\|\varphi\|_{l^1(h\Z^N)}\Gamma_s(x_\alpha,t_j)|$ satisfies
$$
e_{\alpha}^j\le  \int_{Q_h} \Big| \Big(1+\tau m(\xi)\Big)^j \mathcal{F}_h[ \varphi](\xi) -e^{-K |\xi|^{2s} t_j}  \mathcal{F}_h[ \varphi](0) \Big|d\xi  + \|\varphi\|_{\ell^1(h\Z^N)} \int_{\mathbb R^N\setminus Q_h}  e^{-K|\xi|^{2s} t_j} d\xi = I_1+I_2.
$$
To bound the second integral we perform the change of variables $\eta=\xi t_j^{1/2s}$ to get
$$
t_j^{N/2s} I_2 = \|\varphi\|_{l^1(h\Z^N)} \int_{\mathbb R^N\setminus t_j^{1/2s} Q_h}  e^{-K|\eta|^{2s}}d\eta \to 0 \quad \textup{as}\quad j\to\infty.
$$
For the first one, we have that
$$
I_1\le
\int_{Q_h} \Big| \Big(1+\tau m(\xi) \Big)^j - e^{-K|\xi|^{2s} t_j}\Big| |\mathcal{F}_h[ \varphi](\xi)|d\xi + \int_{Q_h} e^{-K|\xi|^{2s} t_j}\Big| \mathcal{F}_h[ \varphi](\xi) -\mathcal{F}_h[ \varphi](0) \Big| d\xi= I_3+I_4.
$$
As before, to estimate these integrals we perform the change $\eta=\xi t_j^{1/\alpha}$.
$$
t_j^{N/2s} I_4= \int_{t_j^{1/2s} Q_h}  e^{-K|\eta|^{2s}} \Big| \mathcal{F}_h[ \varphi]\left( \eta/{t_j^{1/2s}} \right) -\mathcal{F}_h[ \varphi](0) \Big| d\eta
\to 0 \quad \textup{as}\quad j\to\infty
$$
by the dominated convergence theorem. Finally for $I_3$ we note that from \eqref{propsimbol}, we have that $m(\xi)\leq - \delta$ if $|\xi|>\varepsilon$ and $m(\xi)\leq -K |\xi|^{2s}$ if $|\xi|\leq\varepsilon$.
Note also that \eqref{CFL2} ensures that
\[
\tau m(\xi) = - \tau \sum_{\beta\in \Z^N}  \omega(x_\beta,h) (1-\cos(\xi\cdot x_\beta)) \geq -2\tau \sum_{\beta\in \Z^N}  \omega(x_\beta,h) \geq -\frac{1}{2}.
\]
Thus,
$$
\frac12\le 1+\tau m(\xi) \le 1-\tau
\left\{
\begin{array}{ll}
 \delta, \qquad & \mbox{if } |\xi|>\varepsilon, \\
K |\xi|^{2s}, \qquad &\mbox{if } |\xi|\leq \varepsilon,
\end{array}\right.
$$
and
\begin{align*}
I_3\le & \displaystyle C \int_{Q_h} \Big| \Big(1+\tau m(\xi) \Big)^j - e^{-K|\xi|^{2s} t_j}\Big| d\xi\\
=& \displaystyle C \bigg( \int_{B_\varepsilon(0)} \Big| \Big(1+\tau m(\xi) \Big)^j - e^{-K|\xi|^{2s} t_j}\Big|  d\xi +  \int_{Q_h\setminus B_\varepsilon(0)} \Big| \Big(1+\tau m(\xi) \Big)^j - e^{-K|\xi|^{2s} t_j}\Big|d\xi\bigg)
\\
\le & \displaystyle C\bigg( \int_{B_\varepsilon(0)} \Big| \Big(1-\tau K |\xi|^{2s} \Big)^j - e^{-K|\xi|^{2s} t_j}\Big| d\xi  +  \int_{Q_h\setminus B_\varepsilon(0)} \Big| \Big(1-\tau \delta\Big)^j - e^{-K|\xi|^{2s} t_j}\Big|d\xi\bigg)
=C\left(I_5+I_6\right).
\end{align*}
To bound these integrals, we use that for $x\in(0,1)$ the function $(1-x)^j<e^{-xj}$, then
\begin{align*}
I_5\le & \displaystyle\int_{B_\varepsilon(0)} e^{-K|\xi|^{2s}t_j}\Big(1- e^{K|\xi|^{2s}t_j}\Big(1-\tau K |\xi|^{2s} \Big)^j\Big)d\xi \le
\int_{B_\varepsilon(0)} e^{-K|\xi|^{2s}t_j} d\xi= \displaystyle
t_j^{- N/2s} \int_{t^{1/2s} B_\varepsilon(0)} e^{-K|\eta|^{2s}}d\eta
\end{align*}
and
$$
I_6\le  |Q_h|  e^{-\delta t_j}+ t_j^{-N/2s} \int_{t^{1/2s} (Q_h\setminus B_\varepsilon(0))} e^{-K|\eta^2|}d\eta.
$$
Hence
$
\limsup_{t_j\to\infty} t_j^{N/2s} I_3 \le \int_{\mathbb R^N} e^{-K|\eta|^{2s}}d\eta.
$
\end{proof}
As an immediate consequence we obtain the decay of the linear problem.

\begin{corollary}\label{dacaimiento}
Under the assumption of Theorem \ref{convergencia-pb-lineal}, there exists a constant $C$ such that
$$
z(x_\alpha,t_j)\le C t_j^{-\frac{N}{2s}}.
$$
\end{corollary}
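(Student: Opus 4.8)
The plan is to deduce the decay directly from Theorem \ref{convergencia-pb-lineal} together with the sharp sup-norm decay of the fractional heat kernel $\Gamma_s$, using nothing more than the triangle inequality. The only genuine subtlety is that Theorem \ref{convergencia-pb-lineal} controls $z$ only asymptotically (for large $j$), so the finitely many early times must be absorbed into the constant separately.

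First I would record the decay of $\Gamma_s$ from its Fourier representation. Bounding $|e^{ix_\alpha\cdot\xi}|\le 1$ and substituting $\eta=t_j^{1/2s}\xi$ gives
\[
\Gamma_s(x_\alpha,t_j)=\frac1{(2\pi)^N}\int_{\mathbb R^N} e^{-K|\xi|^{2s}t_j}e^{ix_\alpha\cdot\xi}\,d\xi
\le \frac{t_j^{-N/2s}}{(2\pi)^N}\int_{\mathbb R^N} e^{-K|\eta|^{2s}}\,d\eta =: C_0\,t_j^{-N/2s},
\]
with $C_0<\infty$ independent of $\alpha$ and $j$ (here $K>0$ is the constant of \eqref{eq:bddabove}).

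Next, Theorem \ref{convergencia-pb-lineal} asserts that $\limsup_{j\to\infty} t_j^{N/2s}\sup_\alpha|z(x_\alpha,t_j)-\|\varphi\|_{\ell^1(h\Z^N)}\Gamma_s(x_\alpha,t_j)|\le C$, so there exists $j_0$ such that for all $j\ge j_0$ one has $\sup_\alpha|z(x_\alpha,t_j)-\|\varphi\|_{\ell^1(h\Z^N)}\Gamma_s(x_\alpha,t_j)|\le (C+1)t_j^{-N/2s}$. Combining this with the kernel bound via the triangle inequality yields, for all $j\ge j_0$,
\[
z(x_\alpha,t_j)\le|z(x_\alpha,t_j)|\le (C+1)\,t_j^{-N/2s}+\|\varphi\|_{\ell^1(h\Z^N)}C_0\,t_j^{-N/2s}=: C_1\,t_j^{-N/2s}.
\]

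Finally I would dispose of the indices $1\le j<j_0$. The condition \eqref{CFL2} makes the explicit update a convex combination of the values of $z$ at the previous step, so the scheme \eqref{ecuacion lineal} obeys a discrete maximum principle and $\|z(\cdot,t_j)\|_{\ell^\infty(h\Z^N)}\le\|\varphi\|_{\ell^\infty(h\Z^N)}\le h^{-N}\|\varphi\|_{\ell^1(h\Z^N)}$ for every $j$. Since $t_j\ge\tau>0$ for $j\ge1$, the quantities $t_j^{N/2s}$ are bounded on the finite set $\{1,\dots,j_0-1\}$, so enlarging $C_1$ to $C:=\max\{C_1,\,h^{-N}\|\varphi\|_{\ell^1(h\Z^N)}\max_{1\le j<j_0}t_j^{N/2s}\}$ gives $z(x_\alpha,t_j)\le C\,t_j^{-N/2s}$ for all $j\ge1$ (the bound being trivial at $t_0=0$). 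This last bookkeeping step, rather than any hard analysis, is where the only care is required; everything else is an immediate corollary of the preceding theorem.
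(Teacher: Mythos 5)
Your proposal is correct and is essentially the paper's own argument: the paper's proof is the one-line remark that one combines the $t^{-N/2s}$ decay of the fundamental solution $\Gamma_s$ with the bound of Theorem \ref{convergencia-pb-lineal}, which is exactly your triangle-inequality step. The only addition is your explicit handling of the finitely many early times via the discrete maximum principle under \eqref{CFL2}, a bookkeeping detail the paper leaves implicit (and which is needed since the theorem only controls the $\limsup$).
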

\begin{proof}
We just need to combine the decay of the fundamental solution of the associated heat equation together with the bound of Theorem \ref{convergencia-pb-lineal}.
\end{proof}

\begin{remark}\label{comportamiento}
Let us remark that if we have more information about the symbol of the operator we can take $C=0$ in the previous Theorem \ref{convergencia-pb-lineal}. Indeed, assume that
\begin{equation}\label{eq:xisq}
\lim_{\xi\to 0} m(\xi)/|\xi|^{2s}=-K.
\end{equation}
The proof remains the same as before until the estimate of $I_5$.  To do that, we define $r(t)=t^{-\gamma}$ with $\gamma<1/2s$ (notice that $r(t)\to 0$ and $t^{1/2s}r(t)\to \infty$) and split $I_5$ as follows:
$$
I_5= \int_{r(t_j)\le |\xi|<\varepsilon} \Big| \Big(1+\tau m(\xi) \Big)^j - e^{-K|\xi|^{2s} t_j}\Big|  d\xi +
\int_{|\xi|< r(t_j)} \Big| \Big(1+\tau m(\xi) \Big)^j - e^{-K|\xi|^{2s} t_j}\Big|  d\xi =I_7+I_8.
$$
To study $I_7$ we observe that  $m(\xi)\le -(K-\veps) |\xi|^{2s}$ for  $|\xi|\le \varepsilon$, then
$$
I_7\le  \int_{r(t_j)\le |\xi|<\varepsilon} \left( e^{-(K-\veps)|\xi|^{2s} t_j} +
e^{-K|\xi|^{2s} t_j}\right)d\xi=
t_j^{N/2s} \int_{t_j^{1/2s}r(t_j)\le |\xi|<t_j^{1/2s}\varepsilon} \left( e^{-(K-\veps)|\eta|^{2s} } +
e^{-K|\eta|^{2s}}\right)d\eta.
$$
As $t_j^{1/2s} r(t)\to\infty$  we get that $t^{N/2s} I_7\to 0$. To estimate $I_8$ we need a better estimate of $m$. Applying Taylor's expansion
$$
\Big(1+\tau m(\xi)\Big)^j=e^{j\log(1+\tau m(\xi))}=e^{t_j m(\xi)} e^{-\frac{t_j m^2(\xi)\tau}{2(1+ \rho(\xi))^2}}
$$
with $\rho(\xi)\in (\tau m(\xi),0)\subset (-1/2,0)$. Therefore,
$$
t_j^{N/2s} I_8=\int_{|\xi|\le t_j^{1/2s} r(t)} e^{-K|\eta|^{2s}} F(\eta,t_j) d\eta \quad \textup{with} \quad F(\eta,t_j)=\Bigg| e^{\Big(\frac{t_j m(\eta/t_j^{1/2s})}{|\eta|^{2s}} +K \Big) |\eta|^{2s}}
e^{-\frac{(t_j m(\eta/t_j^{1/2s}))^2\tau}{t_j(1+ \rho(\eta/t_j^{1/2s}))^2}}-1\Bigg|.
$$
Since $\eta/t_j^{1/2s} \to 0$  as $j\to+\infty$ we have that $ t_j m(\eta/t_j^{1/2s}) /|\eta|^{2s}+K\to 0$, which implies
$
\lim_{t_j\to\infty} F(\eta,t_j)= 0$ and $F(\eta,t_j)\le e^{\veps |\eta|^{2s}} +1$.
Thus, we can apply the dominated convergence theorem to obtain that $t^{N/2s} I_8\to0$.
\end{remark}

\section{Discrete blow-up solutions}\label{bum-discreto}

In this section, we give conditions on the solution at an arbitrary time that guarantee that the solution of problem \eqref{eq-discreta} blows up at finite time.  We show first that if a nonnegative solution is big enough for some time, then it must blow up in finite time.

\begin{lemma}\label{lem-bum}
 Assume \eqref{as:omega1} and \eqref{CFL1}. Let $u$ be a nonnegative solution of \eqref{eq-discreta}. If there exists $t_n\geq0$ such that
\begin{equation}\label{eq:bucond}
   \|u(\cdot,t_n)\|_{\ell^\infty(h\Z^N)}^{p-1} >\|\omega(\cdot,h)\|_{\ell^1(\Z^N\setminus\{0\})},
\end{equation}
 then $u$ blows up in a finite time $ T_{h,\tau}=\sum_{j=0}^\infty \tau_j<+\infty$. Moreover,
\begin{equation}\label{eq:bupratedisc}
     \lim_{j\to+\infty} (T_{h,\tau} - t_j) \|u(\cdot,t_j)\|_{\ell^\infty(h\Z^N)}^{p-1} =\frac{\tau (1+\tau)^{p-1}}{(1+\tau)^{p-1}-1}.
\end{equation}
\end{lemma}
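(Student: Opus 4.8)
The plan is to reduce everything to a scalar recursion for $M_j:=\|u(\cdot,t_j)\|_{\ell^\infty(h\Z^N)}$, obtained by sandwiching one step of \eqref{eq-discreta} via the discrete maximum principle. First I would rewrite a single step in the ``convex combination'' form
\[
u(x_\alpha,t_{j+1})=\big(1-\tau_j\|\omega(\cdot,h)\|_{\ell^1(\Z^N\setminus\{0\})}\big)u(x_\alpha,t_j)+\tau_j\sum_{\beta\neq0}u(x_\alpha+x_\beta,t_j)\,\omega(\beta,h)+\tau_j\,u^p(x_\alpha,t_j).
\]
Since \eqref{CFL1} forces $\tau_j\|\omega(\cdot,h)\|_{\ell^1(\Z^N\setminus\{0\})}\le\tfrac14$, all coefficients are nonnegative (this also yields preservation of nonnegativity), and bounding each occurrence of $u$ by $M_j$ gives $M_{j+1}\le M_j+\tau_j M_j^p$. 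For the matching lower bound I would evaluate the step at a point where $u(\cdot,t_j)\ge M_j-\veps$ and use only $u\ge0$ to estimate the diffusion term from below by $-M_j\|\omega(\cdot,h)\|_{\ell^1(\Z^N\setminus\{0\})}$; letting $\veps\to0$ produces
\[
M_j+\tau_j\big(M_j^p-\|\omega(\cdot,h)\|_{\ell^1(\Z^N\setminus\{0\})}M_j\big)\le M_{j+1}\le M_j+\tau_j M_j^p.
\]

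From this two-sided estimate the qualitative blow-up follows quickly. Under \eqref{eq:bucond} the factor $M_n^{p-1}-\|\omega(\cdot,h)\|_{\ell^1(\Z^N\setminus\{0\})}$ is positive, so $M_{n+1}>M_n$; since $p>1$ this inequality propagates, so $M_j$ is strictly increasing and stays above the threshold for all $j\ge n$. A short induction shows $M_j$ grows at least geometrically and so exceeds $1$ after finitely many steps, whence \eqref{CFL1} simplifies to $\tau_j=\tau M_j^{1-p}$. Substituting this and dividing by $M_j$ gives
\[
(1+\tau)-\tau\|\omega(\cdot,h)\|_{\ell^1(\Z^N\setminus\{0\})}M_j^{1-p}\le\frac{M_{j+1}}{M_j}\le 1+\tau,
\]
so that $M_{j+1}/M_j\to 1+\tau$ because $M_j\to\infty$ and $1-p<0$. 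In particular $M_j\ge c(1+\tau/2)^j$ for large $j$, making $\tau_j=\tau M_j^{1-p}$ a summable geometric series and hence $T_{h,\tau}=\sum_j\tau_j<\infty$.

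It remains to identify the constant in \eqref{eq:bupratedisc}. Writing $a_k:=M_k^{1-p}$, for $j$ large one has $T_{h,\tau}-t_j=\sum_{k\ge j}\tau_k=\tau\sum_{k\ge j}a_k$, so the quantity to compute is
\[
(T_{h,\tau}-t_j)\,M_j^{p-1}=\tau\,\frac{\sum_{k\ge j}a_k}{a_j}=\tau\sum_{m=0}^\infty\frac{a_{j+m}}{a_j}=\tau\sum_{m=0}^\infty\prod_{i=0}^{m-1}\Big(\frac{M_{j+i+1}}{M_{j+i}}\Big)^{1-p}.
\]
For each fixed $m$ the product converges to $(1+\tau)^{(1-p)m}=r^{-m}$ with $r:=(1+\tau)^{p-1}>1$, and $\sum_{m\ge0}r^{-m}=r/(r-1)$; multiplying by $\tau$ reproduces exactly $\tau(1+\tau)^{p-1}\big/\big((1+\tau)^{p-1}-1\big)$.

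The main obstacle I anticipate is justifying the termwise passage to the limit inside the infinite sum $\sum_m a_{j+m}/a_j$. To handle it I would fix $j_0$ so large that $M_{k+1}/M_k\ge 1+\tau/2$ for all $k\ge j_0$; since $1-p<0$, this gives the uniform geometric domination $a_{k+1}/a_k\le(1+\tau/2)^{1-p}=:\rho<1$, hence $a_{j+m}/a_j\le\rho^m$ for every $j\ge j_0$. Dominated convergence for series (Tannery's theorem) then applies and delivers the claimed limit. The remaining technical points—preservation of nonnegativity, the fact that the supremum need not be attained (absorbed by the $\veps$-argument), and the finitely many initial steps before $M_j\ge1$—are routine once the two-sided recursion is established.
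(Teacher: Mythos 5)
Your proposal is correct and follows essentially the same route as the paper's proof: both reduce everything to the two-sided scalar recursion $M_j+\tau_j\bigl(M_j^p-\|\omega(\cdot,h)\|_{\ell^1(\Z^N\setminus\{0\})}M_j\bigr)\le M_{j+1}\le M_j+\tau_j M_j^p$ for $M_j=\|u(\cdot,t_j)\|_{\ell^\infty(h\Z^N)}$ (obtained exactly as you do, by evaluating the step at near-maximum points and discarding/bounding the diffusion term), deduce geometric growth, hence summability of the adaptive steps and $T_{h,\tau}<\infty$, and identify the constant in \eqref{eq:bupratedisc} by summing geometric series built from the ratios $M_{k+1}/M_k\in[1+\tau\veps_k,\,1+\tau]$. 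The only cosmetic difference is the last step: you compute the limit in one stroke via Tannery's theorem with the uniform domination $a_{j+m}/a_j\le\rho^m$, whereas the paper sandwiches $(T_{h,\tau}-t_j)M_j^{p-1}$ between an upper geometric sum with ratio $(1+\tau\veps_j)^{1-p}$ (and $\veps_j\to1$) and a lower one with ratio $(1+\tau)^{1-p}$, then takes $\limsup$/$\liminf$.
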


\begin{proof} We will proceed in several steps.

\noindent\emph{Step 1.} We will prove that for all $j\geq n$ we have $\|u(\cdot,t_{j+1})\|_{\ell^\infty(h\Z^N)}>\|u(\cdot,t_{j})\|_{\ell^\infty(h\Z^N)}$ and
\begin{align*}
   \|u(\cdot,t_{j+1})\|_{\ell^\infty(h\Z^N)}& \geq  \|u(\cdot,t_{j})\|_{\ell^\infty(h\Z^N)} + \tau_j \|u(\cdot,t_{j})\|_{\ell^\infty(h\Z^N)} (\|u(\cdot,t_{j})\|_{\ell^\infty(h\Z^N)}^{p-1}-\|\omega(\cdot,h)\|_{\ell^1(\Z^N\setminus\{0\})} ).
\end{align*}
 Let us do it for $j=n$. Consider a sequence $\{\alpha_i\}_{i=1}^\infty \subset h\Z^N$ such that $
u(x_{\alpha_i},t_n) \to \|u(\cdot,t_n)\|_{\ell^\infty(h\Z^N)}$ as $i\to+\infty$.
Directly using the equation for $u$ we have
\begin{align*}
    \|u(\cdot,t_{n+1})\|_{\ell^\infty(h\Z^N)}&\geq u(x_{\alpha_i},t_{n+1})\\
    &=u(x_{\alpha_i},t_{n}) + \tau_n \textstyle\sum_{\beta\in\Z^N\setminus\{0\}} \left(u(x_{\alpha_i}+x_\beta,t_n)-u(x_{\alpha_i},t_n)\right) \omega(\beta,h) + \tau_n u^p(x_{\alpha_i},t_{n})\\
    &\textstyle\ge u(x_{\alpha_i},t_{n}) - \tau_n u(x_{\alpha_i},t_n) \sum_{\beta\in\Z^N\setminus\{0\}} \omega(\beta,h) + \tau_n u^p(x_{\alpha_i},t_{n}).
\end{align*}
Taking limits as $i\to+\infty$ in the above estimate, we get
\begin{align*}
   \|u(\cdot,t_{n+1})\|_{\ell^\infty(h\Z^N)}& \geq \textstyle  \|u(\cdot,t_{n})\|_{\ell^\infty(h\Z^N)} - \tau_n \|u(\cdot,t_{n})\|_{\ell^\infty(h\Z^N)} \sum_{\beta\in\Z^N\setminus\{0\}} \omega(\beta,h) + \tau_n \|u(\cdot,t_{n})\|_{\ell^\infty(h\Z^N)}^p\\
   &=\|u(\cdot,t_{n})\|_{\ell^\infty(h\Z^N)} + \tau_n \|u(\cdot,t_{n})\|_{\ell^\infty(h\Z^N)} (\|u(\cdot,t_{n})\|_{\ell^\infty(h\Z^N)}^{p-1}-\|\omega(\cdot,h)\|_{\ell^1(\Z^N\setminus\{0\})} )\\
   &>\|u(\cdot,t_{n})\|_{\ell^\infty(h\Z^N)}.
\end{align*}
where the last step follows from the hypothesis \eqref{eq:bucond}. Note that, in particular, we have
\[
\|u(\cdot,t_{n+1})\|_{\ell^\infty(h\Z^N)}^{p-1} >\|u(\cdot,t_n)\|_{\ell^\infty(h\Z^N)}^{p-1} >\|\omega(\cdot,h)\|_{\ell^1(\Z^N\setminus\{0\})}.
\]
Thus we can argue by induction to complete the proof of this step.

\noindent\emph{Step 2.}
Let us define the sequence
\begin{equation}\label{eq:vepsj}
\veps_j:= (\|u(\cdot,t_{j})\|_{\ell^\infty(h\Z^N)}^{p-1}-\|\omega(\cdot,h)\|_{\ell^1(\Z^N\setminus\{0\})} )/\|u(\cdot,t_{j})\|_{\ell^\infty(h\Z^N)}^{p-1} .
\end{equation}
 Note that, by Step 1, $\{\veps_{j}\}_{j=n}^\infty \subset(0,1)$ is an increasing sequence. By Step 1 and the definitions of $\veps_j$ and $\tau_j$ we have, for all $j\geq n$, that
\begin{align*}
   \|u(\cdot,t_{j+1})\|_{\ell^\infty(h\Z^N)}& \geq  \|u(\cdot,t_{j})\|_{\ell^\infty(h\Z^N)} + \tau_j \|u(\cdot,t_{j})\|_{\ell^\infty(h\Z^N)} (\|u(\cdot,t_{j})\|_{\ell^\infty(h\Z^N)}^{p-1}-\|\omega(\cdot,h)\|_{\ell^1(\Z^N\setminus\{0\})} )\\
   &= \|u(\cdot,t_{j})\|_{\ell^\infty(h\Z^N)} + \tau_j \|u(\cdot,t_{j})\|_{\ell^\infty(h\Z^N)}^p \veps_j\\
   &=\|u(\cdot,t_{j})\|_{\ell^\infty(h\Z^N)} + \tau \min\{1, \|u(\cdot,t_j)\|_{\ell^\infty(h\Z^N)}^{1-p}\} \|u(\cdot,t_{j})\|_{\ell^\infty(h\Z^N)}^p \veps_j\\
   &\le (1+\tau \veps_j)\|u(\cdot,t_{j})\|_{\ell^\infty(h\Z^N)}.
\end{align*}
In particular, this implies that $\|u(\cdot,t_j)\|_{\ell^\infty(h\Z^N)}\to+\infty$ as $j\to+\infty$, since
\begin{align*}
    \|u(\cdot,t_{j+1})\|_{\ell^\infty(h\Z^N)}&\geq  (1+\tau \veps_j)\|u(\cdot,t_{j})\|_{\ell^\infty(h\Z^N)}\geq \|u(\cdot,t_{n})\|_{\ell^\infty(h\Z^N)} \textstyle\prod_{k=n}^j (1+\tau \veps_k)\\
    &\geq \|u(\cdot,t_{n})\|_{\ell^\infty(h\Z^N)} (1+\tau \veps_n)^{j-n+1} \stackrel{j\to+\infty}{\longrightarrow} +\infty.
\end{align*}
A trivial consequence of this fact is that $\veps_j\to1$ as $j\to+\infty$ by \eqref{eq:vepsj}.

\noindent \emph{Step 3.} We will show that $u$ blows up in finite time by computing an upper bound for the blow-up time and obtain the $\leq$ part in \eqref{eq:bupratedisc}. By direct computations, for all $j\geq n$, we have that
\begin{align*}
    T_{h,\tau}-t_{j}&=\sum_{k=0}^\infty \tau_k - \sum_{k=0}^{j-1} \tau_k = \sum_{k=j}^\infty \tau_k = \tau \sum_{k=j}^\infty \|u(\cdot,t_{k})\|_{\ell^\infty(h\Z^N)}^{1-p} \leq \tau \|u(\cdot,t_{j})\|_{\ell^\infty(h\Z^N)}^{1-p}\sum_{k=j}^\infty \left(\frac{1}{(1+\tau \veps_j)^{p-1}}\right)^{k-j}\\
    &=\frac{\tau (1+\varepsilon_j\tau)^{p-1}}{(1+\varepsilon_j\tau)^{p-1}-1}\|u(\cdot,t_{j})\|_{\ell^\infty(h\Z^N)}^{1-p}.
\end{align*}
In particular, this shows that $T_{h,\tau}<+\infty$. Moreover, from the above estimate and the fact that $\veps_j\to1$ as $j\to+\infty$, we get $
\lim_{j\to+\infty} (T_{h,\tau}-t_{j})\|u(\cdot,t_{j})\|_{\ell^\infty(h\Z^N)}^{p-1} \leq  \frac{\tau (1+\tau)^{p-1}}{(1+\tau)^{p-1}-1}$.

\noindent \emph{Step 4.} Finally will obtain the $\geq$ part in \eqref{eq:bupratedisc}. Directly from the equation of $u$ we have
\[
u(x_\alpha,t_{j+1}) = u(x_\alpha,t_{j}) \Big(1-\tau_j \sum_{\beta\in\Z^N\setminus\{0\}} \omega(\beta,h)\Big) + \tau_j\sum_{\beta\in\Z^N\setminus\{0\}} u(x_\alpha+x_\beta,t_{j})\omega(\beta,h) + \tau_j u^p(x_\alpha,t_{j}).
\]
We use \eqref{CFL1} and the fact that for $j\geq n$ we have $\|u(\cdot,t_n)\|_{\ell^\infty(h\Z^N)}^{p-1} > 1$ to get
\begin{align*}
   \|u(\cdot,t_{j+1})\|_{\ell^\infty(h\Z^N)} \leq &\|u(\cdot,t_{j}) \|_{\ell^\infty(h\Z^N)}\Big(1-\tau_j \sum_{\beta\in\Z^N\setminus\{0\}} \omega(\beta,h)\Big)\\
   &+ \|u(\cdot,t_{j})\|_{\ell^\infty(h\Z^N)} \tau_j \sum_{\beta\in\Z^N\setminus\{0\}} \omega(\beta,h) +  \tau_j\|u(\cdot,t_{j})\|_{\ell^\infty(h\Z^N)}^p\\
   =&\|u(\cdot,t_{j}) \|_{\ell^\infty(h\Z^N)} + \tau_j\|u(\cdot,t_{j})\|_{\ell^\infty(h\Z^N)}^p\le (1+\tau) \|u(\cdot,t_{j}) \|_{\ell^\infty(h\Z^N)},
\end{align*}
for all $j\geq n$. We can proceed as before to get that $(T_{h,\tau}-t_{j})\|u(\cdot,t_{j})\|_{\ell^\infty(h\Z^N)}^{p-1} \geq \frac{\tau (1+\tau)^{p-1}}{(1+\tau)^{p-1}-1}$ for all $j\geq n$.
This completes the proof.
\end{proof}

\begin{proof}[Proof of Theorem \ref{tasas}] Assume that $u$ is a blow-up solution. Then, there exists a time $t_n$ such that we are in the hypothesis of Lemma \ref{lem-bum}. Therefore, there exists $t_m\ge t_n$ such that
$$
\frac12 \frac{\tau (1+\tau)^{p-1}}{(1+\tau)^{p-1}-1}
\le (T_{h,\tau}-t_{j})\|u(\cdot,t_{j})\|_{\ell^\infty(h\Z^N)}^{p-1}
\le 2 \frac{\tau (1+\tau)^{p-1}}{(1+\tau)^{p-1}-1},
$$
for all $t_j\ge t_m$. As the function $g(\tau )=\frac{\tau (1+\tau)^{p-1}}{(1+\tau)^{p-1}-1}$ is increasing we get that for $\tau\le 1$
$$
\frac{1}{p-1}=\lim_{\tau \to 0} g(\tau)\le  g(\tau)\le g(1)=
\frac{2^{p-1}}{2^{p-1}-1},
$$
and the result follows.
\end{proof}

Observe that if $L_h$ is the discretization of either the fractional Laplacian or the Laplacian  given in Remark \ref{ejemplos}, we get that $\|\omega(\cdot,h)\|_{\ell^1(\Z^N\setminus\{0\})}\to\infty$ as $h\to 0$. Thus, the above blow-up condition is not a good enough condition to obtain uniform-in-$h$ blow-up properties.  However, it suffices to give us the following property for global solutions.

\begin{corollary}\label{global-bounded}Assume \eqref{as:omega1} and \eqref{CFL1}. Let $u$ be a nonnegative solution of \eqref{eq-discreta}. If $u$ is global in time, then $u$ is bounded.
\end{corollary}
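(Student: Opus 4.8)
The plan is to prove the contrapositive by directly invoking Lemma \ref{lem-bum}. Recall that, by definition, a solution is \emph{global in time} precisely when it does not blow up in finite time, i.e. $T_{h,\tau}=\sum_{j=0}^\infty \tau_j=+\infty$. Lemma \ref{lem-bum} asserts that the single pointwise-in-time condition \eqref{eq:bucond}---the existence of some $t_n\ge0$ with $\|u(\cdot,t_n)\|_{\ell^\infty(h\Z^N)}^{p-1}>\|\omega(\cdot,h)\|_{\ell^1(\Z^N\setminus\{0\})}$---already forces finite-time blow-up. The corollary is then nothing but the logical negation of that implication, and no further analysis is required.

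More precisely, suppose $u$ is a nonnegative global-in-time solution of \eqref{eq-discreta}. If there existed an index $n$ for which \eqref{eq:bucond} held, then Lemma \ref{lem-bum} would produce a finite blow-up time $T_{h,\tau}<+\infty$, contradicting globality. Hence \eqref{eq:bucond} must fail at every time step, so that
\[
\|u(\cdot,t_j)\|_{\ell^\infty(h\Z^N)}^{p-1}\le \|\omega(\cdot,h)\|_{\ell^1(\Z^N\setminus\{0\})}\qquad \textup{for all } j\ge0.
\]
Since $p>1$ and, by \eqref{as:omega1}, $\|\omega(\cdot,h)\|_{\ell^1(\Z^N\setminus\{0\})}$ is a finite positive constant, raising both sides to the power $1/(p-1)$ gives the uniform-in-$j$ estimate
\[
\sup_{j\ge0}\|u(\cdot,t_j)\|_{\ell^\infty(h\Z^N)}\le \|\omega(\cdot,h)\|_{\ell^1(\Z^N\setminus\{0\})}^{\frac{1}{p-1}},
\]
which is exactly the claimed boundedness.

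There is essentially no technical obstacle to overcome here, since all the real work has been absorbed into Lemma \ref{lem-bum}. The only points worth verifying are bookkeeping of quantifiers---that ``global'' is the negation of ``there \emph{exists} some $t_n$ satisfying \eqref{eq:bucond}'', so that failure of \eqref{eq:bucond} holds at \emph{every} $t_j$---and that the resulting bound is genuinely uniform in the discrete time index $j$, which it is because the right-hand side does not depend on $j$. Note, however, that the threshold \emph{does} depend on $h$, consistent with the remark preceding the corollary: for the Laplacian- or fractional-Laplacian-type discretizations one has $\|\omega(\cdot,h)\|_{\ell^1(\Z^N\setminus\{0\})}\to\infty$ as $h\to0$, so this argument yields an $h$-dependent a priori bound rather than a uniform one.
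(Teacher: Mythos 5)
Your proof is correct and follows essentially the same route as the paper: the paper also argues via Lemma \ref{lem-bum}, assuming by contradiction that a global solution is unbounded so that \eqref{eq:bucond} holds at some $t_n$, forcing finite-time blow-up. Your contrapositive phrasing is logically identical, and the explicit uniform bound $\|u(\cdot,t_j)\|_{\ell^\infty(h\Z^N)}\le \|\omega(\cdot,h)\|_{\ell^1(\Z^N\setminus\{0\})}^{1/(p-1)}$ is a small (correct) refinement over the paper's qualitative conclusion.
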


\begin{proof}
 The fact that $u$ is global in time is equivalent to say that
$\sum_{j=0}^\infty \tau_j=+\infty$.
Now assume by contradiction that $u$ is unbounded. Then, there exists $t_n$ such that $ \|u(\cdot,t_n)\|_{\ell^\infty(h\Z^N)}^{p-1} >\|\omega(\cdot,h)\|_{\ell^1(\Z^N\setminus\{0\})}$.
Thus, by Lemma \ref{lem-bum}, we have that $T^{\textup{b}}_{h,\tau}=\sum_{j=0}^\infty \tau_j<+\infty$, which is a contradiction.
\end{proof}

In order to get another blow-up property, we compare the Cauchy problem with the Dirichlet problem posed in a bounded domain $\Omega$. The proof is based on the Kapplan methods, and thus, relies on the eigenpairs of $L_h$ in $\Omega$ studied in Section \ref{sec:eigpair}.

\begin{lemma}\label{kapplan}
    Assume \eqref{as:omega1}, \eqref{as:omega2} and \eqref{CFL1} and let  $\Omega$ be a bounded domain. Let $u$ be a nonnegative solution of \eqref{eq-discreta} and $\lambda_{h,\Omega}$ be the first eigenvalue of $L_h$ in $\Omega$ with an associated eigenfunction $\phi$ (i.e $\phi$ satisfies \eqref{eq:disceig}) such that $\|\phi\|_{\ell^1(h\Z^N)}=1$. If
$
h^N\sum_{x_\alpha\in\Omega} u(x_\alpha,0) \phi(x_\alpha) > (\lambda_{h,\Omega})^{\frac{1}{p-1}},
$
then $u$ blows up in finite time.
\end{lemma}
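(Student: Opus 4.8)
The plan is to run the Kaplan eigenfunction method in the discrete nonlocal setting. Let $\phi$ be the (nonnegative, by Proposition \ref{prop:eig}) first eigenfunction normalized so that $\|\phi\|_{\ell^1(h\Z^N)}=1$, and introduce the Kaplan functional
\[
F(t_j):=\langle u(\cdot,t_j),\phi\rangle_{\ell^2(h\Z^N)}=h^N\sum_{x_\alpha\in h\Z^N} u(x_\alpha,t_j)\phi(x_\alpha),
\]
so that the hypothesis reads $F(0)>\lambda_{h,\Omega}^{1/(p-1)}$. Since $u\geq0$, $\phi\geq0$ and $\|\phi\|_{\ell^1(h\Z^N)}=1$, one always has $F(t_j)\leq\|u(\cdot,t_j)\|_{\ell^\infty(h\Z^N)}$; hence it suffices to show that $F(t_j)\to+\infty$, because this forces $u$ to be unbounded and, by Corollary \ref{global-bounded} (global solutions are bounded), to blow up in finite time.

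The core of the argument is to show that $F$ obeys a discrete blow-up inequality. Multiplying the scheme \eqref{eq-discreta} by $h^N\phi(x_\alpha)$ and summing over $h\Z^N$ gives
\[
\partial_{\tau_j}F(t_j)=\langle\phi,L_h u(\cdot,t_j)\rangle_{\ell^2(h\Z^N)}+h^N\sum_{x_\alpha\in h\Z^N}\phi(x_\alpha)u^p(x_\alpha,t_j).
\]
For the diffusion term I would use the self-adjointness of $L_h$ coming from the symmetry of the weights in \eqref{as:omega1}, which is legitimate here because $\phi$ is compactly supported, $\omega(\cdot,h)\in\ell^1$ and $u(\cdot,t_j)$ is bounded at each time level. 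Writing $\langle\phi,L_hu\rangle=\langle L_h\phi,u\rangle$ and splitting according to whether $x_\alpha\in\Omega$ or not, the interior part equals $-\lambda_{h,\Omega}F(t_j)$ by \eqref{eq:disceig}, while the exterior part is $h^N\sum_{x_\alpha\notin\Omega}u(x_\alpha,t_j)L_h\phi(x_\alpha)\geq0$, since for $x_\alpha\notin\Omega$ one has $L_h\phi(x_\alpha)=\sum_{\beta\neq0}\phi(x_\alpha+x_\beta)\omega(\beta,h)\geq0$ and $u\geq0$. For the reaction term, $h^N\phi$ is a probability measure on the grid and $s\mapsto s^p$ is convex, so Jensen's inequality yields $h^N\sum\phi u^p\geq F(t_j)^p$. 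Combining,
\[
F(t_{j+1})\geq F(t_j)+\tau_j F(t_j)\bigl(F(t_j)^{p-1}-\lambda_{h,\Omega}\bigr).
\]

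From here the conclusion follows as in Lemma \ref{lem-bum}. Since $F(0)^{p-1}>\lambda_{h,\Omega}$, an induction shows that $\{F(t_j)\}$ is increasing and $F(t_j)^{p-1}-\lambda_{h,\Omega}\geq\delta:=F(0)^{p-1}-\lambda_{h,\Omega}>0$ for every $j$, whence $F(t_{j+1})-F(t_j)\geq\tau_j F(0)\delta$. If $u$ were global, then $\sum_j\tau_j=+\infty$ and summing gives $F(t_j)\geq F(0)+F(0)\delta\,t_j\to+\infty$, so $\|u(\cdot,t_j)\|_{\ell^\infty(h\Z^N)}\to+\infty$, contradicting Corollary \ref{global-bounded}. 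Hence $u$ blows up in finite time.

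The main obstacle is the exterior (nonlocal boundary) term $h^N\sum_{x_\alpha\notin\Omega}uL_h\phi$: in the classical local Kaplan method this term is absent because $\phi$ vanishes on $\partial\Omega$, whereas the nonlocal operator $L_h$ ``sees'' the interior values of $\phi$ from outside $\Omega$ and produces a genuinely nonzero contribution. The key observation that makes the method work is that this contribution has the favorable sign, namely $L_h\phi\geq0$ outside $\Omega$ and $u\geq0$, so it only helps the inequality; the remaining care is simply to justify the summability needed to transfer $L_h$ from $u$ to $\phi$.
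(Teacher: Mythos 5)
Your proof is correct, and it implements the Kaplan method in a genuinely different way from the paper. The core ingredients coincide — the eigenfunction functional, self-adjointness of $L_h$, Jensen's inequality for the reaction term, and a final contradiction with Corollary \ref{global-bounded} — but the paper first \emph{truncates}: it introduces the auxiliary discrete Dirichlet problem \eqref{Dirichlet-discreto} (with the time steps $\tau_j$ inherited from $u$), invokes the comparison principle to obtain $v\le u$, and runs the eigenfunction argument on $v$; since both $v$ and $\phi$ vanish outside $\Omega$, the self-adjointness identity there produces exactly $-\lambda_{h,\Omega}I(t_j)$ with no exterior contribution. You instead work directly with $u$ on all of $h\Z^N$ and deal with the resulting nonlocal exterior term by the sign observation $L_h\phi\ge 0$ on $h\Z^N\setminus\Omega$ (because $\phi\ge 0$ and $\phi=0$ there), so that term can simply be discarded in the lower bound. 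Your route dispenses with the auxiliary problem and the comparison principle altogether, at the modest price of justifying the absolute convergence needed to transfer $L_h$ onto $\phi$ — which you do correctly, using that $\phi$ has finite support, $\omega(\cdot,h)\in\ell^1(\Z^N\setminus\{0\})$ and $u(\cdot,t_j)\in\ell^\infty(h\Z^N)$. The endgame also differs: the paper argues that the increasing, bounded sequence $I(t_j)$ converges and that $\partial_{\tau_j}I(t_j)\to 0$ while the right-hand side of the differential inequality stays bounded away from zero, whereas you sum the telescoping bound $F(t_{j+1})-F(t_j)\ge \tau_j F(0)\bigl(F(0)^{p-1}-\lambda_{h,\Omega}\bigr)$ against $\sum_j\tau_j=+\infty$ to get linear growth of $F$, contradicting $F(t_j)\le\|u(\cdot,t_j)\|_{\ell^\infty(h\Z^N)}\le K$; both are valid, and yours is marginally more direct.
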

\begin{proof}
Let us consider the following discrete Dirichlet problem
\begin{equation}\label{Dirichlet-discreto}
\left\{
\begin{array}{ll}
\partial_{\tau_j} v(x_\alpha,t_j)=L_h v(x_\alpha,t_j)+v^p(x_\alpha,t_j),\quad & x_\alpha\in \Omega,\, t_j\in(0,T), \\
v(x_\alpha,t_j)=0, & x_\alpha\in \R^N\setminus\Omega,\, t_j\in(0,T),\\
v(x_\alpha,0)= u(x_\alpha,0), & x_\alpha\in\Omega,
\end{array}\right.
\end{equation}
Note that $\tau_j$ depends on the solution $u$ of the Cauchy problem \eqref{eq-discreta} through \eqref{CFL1}. Thus, it is standard to check that $v\leq u$. As a consequence, we only need to show that $v$ blows up in finite time. Let us define
$
I(t_j)\coloneqq h^N \sum_{x_\alpha\in \Omega} v(x_\alpha,t_j)\phi(x_\alpha).
$
Note that, by Proposition \ref{prop:eig}, we have that $\phi\geq0$ and $\phi(x_\alpha)>0$ if $x_\alpha\in\Omega$.
Multiplying the equation of Problem \eqref{Dirichlet-discreto} by $h^N\phi(x_\alpha)$ and performing the sum over $x_\alpha\in \Omega$ we obtain that
$$
\partial_{\tau_j} I(t_j) =h^N\sum_{x_\alpha\in\Omega} L_h v(x_\alpha,t_j) \phi(x_\alpha)+ h^N\sum_{x_\alpha\in\Omega} v^p(x_\alpha,t_j) \phi(x_\alpha).
$$
On the one hand, we can apply Jensen's inequality to get
$$
h^N\sum_{x_\alpha\in\Omega} v^p(x_\alpha,t_j) \phi(x_\alpha)  \ge \Big(h^N\sum_{x_\alpha \in\Omega} v(x_\alpha,t_j) \phi(x_\alpha)\Big)^p=I(t_j)^p.
$$
On the other hand, by \eqref{as:omega1} we have that $L_h$ is self-adjoint, then
\[
h^N\sum_{x_\alpha\in\Omega} L_h v(x_\alpha,t_j) \phi(x_\alpha) = h^N\sum_{x_\alpha\in\Omega} v(x_\alpha,t_j) L_h \phi(x_\alpha) = -\lambda_{h,\Omega}h^N \sum_{x_\alpha\in\Omega} v(x_\alpha,t_j)\phi(x_\alpha)=- \lambda_{h,\Omega} I(t_j).
\]
Therefore,
\begin{equation}\label{kaplan}
\partial_{\tau_j} I(t_j) \ge I(t_j)^p -\lambda_{h,\Omega}  I(t_j)= I(t_j) (I(t_j)^{p-1}-\lambda_{h,\Omega} ).
\end{equation}
Since, by hypothesis, $I(0)^{p-1}>\lambda_{h,\Omega}>0$, we have that $\{I(t_j)\}_{j=1}^\infty$ is an increasing sequence.

Let us argue by contraction, and assume that $u$ is a global-in-time solution. By Corollary \ref{global-bounded}, we then have that $u$ is bounded (and let us write $u\leq K$ for some $K>0$). This implies that $\tau_j$ is bounded from below since $
\tau_j=\tau \min\{1, \|u(\cdot,t_j)\|_{\ell^\infty(h\Z^N)}^{1-p}\} \geq \tau \min\{1, K^{1-p}\}$,
and $I(t_j)$ is bounded from above since
\[
I(t_j) \leq \|v(\cdot,t_j)\|_{\ell^\infty(h\Z^N)} \|\phi\|_{\ell^1(h\Z^N)} \leq \|u(\cdot,t_j)\|_{\ell^\infty(h\Z^N)}\leq K.
\]
Therefore, $I(t_j)$ is an increasing sequence bounded from above, which in particular shows that there exists $K_0$ such that
$
\lim_{j\to+\infty} I(t_j)=K_0>I(0).
$
We also have that
\[
0\leq \lim_{j\to+\infty} \partial_{\tau_j} I(t_j) = \lim_{j\to+\infty} \frac{I(t_{j+1})-I(t_j)}{\tau_j} \leq \lim_{j\to+\infty} \frac{I(t_{j+1})-I(t_j)}{\tau \min\{1, K^{1-p}\}} =\frac{K_0-K_0}{\tau \min\{1, K^{1-p}\}}=0.
\]
From here, we reach a contradiction taking limits in \eqref{kaplan}:
\[
0=\lim_{j\to+\infty} \partial_{\tau_j} I(t_j) \geq \lim_{j\to+\infty} \left(I(t_j) (I(t_j)^{p-1}-\lambda_{h,\Omega} )\right) = K_0 (K_0^{p-1}-\lambda_{h,\Omega} ) > K_0 (I(0)^{p-1}-\lambda_{h,\Omega} )>0. \qedhere
\]
\end{proof}

\section{The Fujita exponent for the discrete equation}\label{sec-fujita}

The main goal of this section is to prove Theorem \ref{teo.fujita1}. First we show that, for $s\in(0,1]$, there exist global solutions of \eqref{eq-discreta} in the range $p>1+\frac{2s}{N}$ under the condition that the diffusion operator $L_h$ has a symbol $m$ such that $m(\xi)\leq -K |\xi|^{2s}$ for $|\xi|<\xi_0$ (see Lemma \ref{lem:simbolp} for sufficient conditions on $L_h$ for this property to hold).

\begin{lemma}[Global solutions for $p>1+\frac{2s}{N}$]  \label{hay globales} Assume \eqref{as:omega1} and \eqref{CFL1}.
Let $L_h$ have a Fourier symbol $m(\xi)$ such that either \eqref{eq:bddabove} or \eqref{eq:xisq} hold
for some $K>0$ and $s\in(0,1]$. Then, for $p>1+\frac{2s}{N}$, there exists nonnegative nontrivial global solutions of \eqref{eq-discreta}.

\end{lemma}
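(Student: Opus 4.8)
The plan is to build an explicit bounded \emph{supersolution} of \eqref{eq-discreta} of the separated form $\bar u(x_\alpha,t_j)=a_j\, z(x_\alpha,t_j)$, where $z$ solves the constant-step linear scheme \eqref{ecuacion lineal} with a fixed nonnegative datum $\psi\in\ell^1(h\Z^N)$ (so $z\ge 0$ by monotonicity of the linear scheme), and $\{a_j\}_{j\ge0}$ is an increasing amplitude sequence to be chosen. Given such a supersolution, global existence follows from the comparison principle for the monotone explicit scheme (see \cite{dTEnJa19}): for any datum $0\le\varphi\le\bar u(\cdot,0)$ the solution $u$ of \eqref{eq-discreta} stays below $\bar u$, hence remains bounded, and a short bootstrap shows it is global. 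Nonnegativity and nontriviality are immediate from $\varphi=a_0\psi\ge0$. The construction is guided by the continuous heuristic: for the limit problem the candidate $\bar u=M(t)\Gamma_s(\cdot,t+1)$ satisfies $\partial_t\bar u-L\bar u=M'(t)\Gamma_s$, so the supersolution inequality reduces to $M'(t)\ge M(t)^p\,\Gamma_s(\cdot,t+1)^{p-1}$; since $\|\Gamma_s(\cdot,t+1)\|_{\ell^\infty}^{p-1}\le C(1+t)^{-\beta}$ with $\beta=\tfrac{N(p-1)}{2s}$, the condition $p>1+\tfrac{2s}{N}$ is exactly $\beta>1$, which makes $\int_0^\infty(1+t)^{-\beta}\,dt<\infty$ and forces $M$ to stay bounded for small $M(0)$.

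To realize this discretely, I would first use $z(t_{j+1})=z(t_j)+\tau L_h z(t_j)$ to compute $\partial_\tau\bar u(t_j)=\tfrac{a_{j+1}-a_j}{\tau}z(t_j)+a_{j+1}L_h z(t_j)$, so that the inequality $\partial_\tau\bar u\ge L_h\bar u+\bar u^p$ collapses, after the cross term $(a_{j+1}-a_j)L_h z$ recombines, to the pointwise requirement $\tfrac{a_{j+1}-a_j}{\tau}\,z(x_\alpha,t_{j+1})\ge a_j^p\,z(x_\alpha,t_j)^p$ (here one uses $\tfrac1\tau z(t_j)+L_h z(t_j)=\tfrac1\tau z(t_{j+1})$). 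The CFL bound $\tau\,\|\omega(\cdot,h)\|_{\ell^1(\Z^N\setminus\{0\})}\le\tfrac14$ together with $z\ge0$ gives $z(x_\alpha,t_{j+1})\ge\bigl(1-\tau\|\omega(\cdot,h)\|_{\ell^1(\Z^N\setminus\{0\})}\bigr)z(x_\alpha,t_j)\ge\tfrac34 z(x_\alpha,t_j)$, so it suffices to impose $a_{j+1}-a_j\ge\tfrac43\,\tau\,a_j^p\,\|z(\cdot,t_j)\|_{\ell^\infty(h\Z^N)}^{p-1}$. Since either hypothesis yields \eqref{eq:bddabove} on all of $Q_h$, Corollary \ref{dacaimiento} together with the $\ell^\infty$-contractivity of the linear scheme (the update is a convex combination under the CFL bound) gives $\|z(\cdot,t_j)\|_{\ell^\infty(h\Z^N)}\le C_0(1+t_j)^{-N/(2s)}$, turning the requirement into $a_{j+1}-a_j\ge \tfrac43 C_0^{p-1}\tau\,a_j^p(1+t_j)^{-\beta}$ with $\beta>1$.

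Because $t_j=\tau j$ and $\beta>1$ the sum $S:=\sum_{j\ge0}\tau(1+t_j)^{-\beta}$ is a convergent Riemann sum, so defining $\{a_j\}$ by equality in the recursion above and choosing $a_0$ small enough that $\tfrac43 C_0^{p-1}2^p S\,a_0^{p-1}\le 1$ yields, by induction, a bounded increasing sequence with $a_j\le 2a_0$ for all $j$. For the final assembly I would shrink $a_0$ further so that also $\sup_j\|\bar u(\cdot,t_j)\|_{\ell^\infty(h\Z^N)}\le 2a_0C_0<1$, and then run the comparison inductively in $j$: assuming $u(\cdot,t_j)\le\bar u(\cdot,t_j)$ with $\|u(\cdot,t_j)\|_{\ell^\infty(h\Z^N)}<1$, the adaptive rule in \eqref{CFL1} forces the step of $u$ to be exactly $\tau_j=\tau$, so $u$ and $\bar u$ advance with the same step and one step of the monotone scheme gives $u(\cdot,t_{j+1})\le\bar u(\cdot,t_{j+1})<1$. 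Consequently $\tau_j\equiv\tau$, whence $T_{h,\tau}=\sum_j\tau_j=+\infty$ and $u$ is a nontrivial nonnegative global solution.

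I expect the main obstacle to be precisely the discretization error term $(a_{j+1}-a_j)L_h z$, which has no continuous counterpart in the exact identity $\partial_t(M\Gamma_s)=M'\Gamma_s+M L\Gamma_s$; the algebraic recombination into $z(t_{j+1})$, combined with the sign of $z$ and the CFL bound, is exactly what dissolves it and reduces everything to the summability $\beta>1$. A secondary subtlety is the adaptive time step, which is handled by keeping the entire construction below amplitude one so that $\tau_j\equiv\tau$ throughout and both $u$ and $\bar u$ genuinely evolve under the \emph{same} constant step, allowing the one-step comparison to be iterated.
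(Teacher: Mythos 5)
Your proposal is correct, and its skeleton is the same as the paper's: a supersolution of product form (amplitude times the solution of the constant-step linear scheme), recombination of the cross term via the identity $\tfrac{1}{\tau}z(t_j)+L_h z(t_j)=\tfrac{1}{\tau}z(t_{j+1})$, the decay estimate of Theorem \ref{convergencia-pb-lineal}/Corollary \ref{dacaimiento}, and the device of keeping the supersolution below $1$ so that \eqref{CFL1} forces $\tau_j\equiv\tau$ and the one-step comparison can be iterated. The genuine difference is how the amplitude is built. The paper takes $\mathcal{A}(t_j)=(t_J+t_j)^{\lambda}$, an \emph{unbounded} polynomial with a large time shift $t_J$, and the restriction $p>1+\tfrac{2s}{N}$ enters through power counting: one needs $0<\lambda<\tfrac{N}{2s}-\tfrac{1}{p-1}$, which is possible exactly in that range. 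You instead take a \emph{bounded}, recursively defined sequence $a_j$, and the same restriction enters through summability: $\beta=\tfrac{N(p-1)}{2s}>1$, so $\sum_j \tau(1+t_j)^{-\beta}<\infty$, closed by the induction $a_j\le 2a_0$ for $a_0$ small. These are equivalent guises of the exponent condition; your route is somewhat more elementary (no time shift, no exponent comparison, bounded amplitude) at the price of requiring small initial amplitude, which is precisely the role the paper's shift by $t_J$ plays. Two steps you should spell out when writing this up, though neither is a gap: (i) that \eqref{eq:xisq} implies \eqref{eq:bddabove} on all of $Q_h$ uses \eqref{propsimbol} together with continuity of $m$ and its strict negativity on the compact set $Q_h\setminus B_\varepsilon(0)$ --- the same implicit reduction the paper makes when it invokes Theorem \ref{convergencia-pb-lineal} under either hypothesis; and (ii) the all-times bound $\|z(\cdot,t_j)\|_{\ell^\infty(h\Z^N)}\le C_0(1+t_j)^{-N/(2s)}$ needs, as you indicate, the $\ell^\infty$-contractivity of the scheme to cover the finitely many initial steps about which the limsup statement of Theorem \ref{convergencia-pb-lineal} says nothing.
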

\begin{proof}
    The idea of the proof is to construct a bounded supersolution to the solution $u$ of problem \eqref{eq-discreta}. First, we note that if we can show that $u\leq 1$, we have, from \eqref{CFL1}, that $\tau_j=\tau$.

    To do so, we fix $\tau>0$ as usual, define a fixed time step grid $t_j=\tau j$, and consider $z$ to be the solution of the linear problem
\begin{equation*}
\left\{
\begin{split}
\partial_{\tau} z(x_\alpha,t_j)-L_h z(x_\alpha,t_j) &= 0,  \hspace{0.74cm}\quad \textup{if} \quad x_\alpha\in h \Z^N,\, t_j \in \tau \N, \\
z (x_\alpha,0) &= \varphi(x_\alpha),  \quad \textup{if} \quad x_\alpha\in h\Z^N,
\end{split}\right.
\end{equation*}
for some $\varphi\in \ell^1(h\Z^N)$. Given $J\in \N$ and $\lambda\in(0,1)$ let us also define the following function that will be our candidate for supersolution:
$$
\overline u(x_\alpha,t_j)\coloneqq \mathcal{A}(t_j)  z(x_\alpha,t_J+t_j) \quad \textup{with} \quad \mathcal{A}(t_j)=(t_J+t_j)^\lambda.
$$
Let us show first that we have $\overline{u}\leq 1$ for an appropriate choice of the parameters $J$ and $\lambda$. Indeed, by Theorem \ref{convergencia-pb-lineal} (asymptotic behaviour of the linear equation), we have that there exists a constant $C>0$
\[
\overline u(x_\alpha,t_j)\leq (t_J+t_j)^\lambda  z(x_\alpha,t_J+t_j) \leq C (t_J+t_j)^{\lambda-\frac{N}{2s}}.
\]
So it is enough to take $\lambda < \min\{\frac{N}{2s},1\}$ and $J$ big enough to get that $\overline{u}\leq 1$. At this point, if we show that $\overline{u}$ satisfies $\partial_{\tau_j} \overline{u}-L_h \overline{u}-\overline{u}^p\geq0$,
this suffices to show that any solution $u$ of \eqref{eq-discreta} with $u(x_\alpha,0)\leq \overline{u}(x_\alpha,0)$ will be global in time. This is a consequence of the fact that, at every time step $u(x_\alpha,t_j)\leq \overline{u}(x_\alpha,t_j) \leq 1$ and thus $\tau_j=\tau$.
To prove this, let us note that
\[
L_h \overline{u}(x_\alpha,t_j)=\mathcal{A}(t_j) L_h z(x_\alpha,t_J+t_j)=\mathcal{A}(t_j) \partial_{\tau} z(x_\alpha,t_J+t_j)
\]
and
\[
\partial_\tau \overline u(x_\alpha,t_j)= \partial_\tau \mathcal{A}(t_j) z(x_\alpha,t_J+t_{j+1})+  \mathcal{A}(t_j) \partial_{\tau} z(x_\alpha,t_J+t_j).
\]
Thus,
\begin{equation}\label{eq.super}
\partial_\tau \overline u(x_\alpha,t_j)-L_h \overline{u}(x_\alpha,t_j)- \overline u^p(x_\alpha,t_j)= \partial_\tau \mathcal{A}(t_j) z(x_\alpha,t_J+t_{j+1}) - \mathcal{A}(t_j)^p z^p(x_\alpha,t_J+t_j).
\end{equation}
Let us estimate the terms of the right hand side of \eqref{eq.super}. By definition of $z$, we have
\begin{align*}
z(x_\alpha,t_J+t_{j+1})&= z(x_\alpha,t_J+t_{j}) \bigg(1-\tau \sum_{\beta\in\Z^N\setminus\{0\}} \omega(\beta,h)\bigg) + \tau \sum_{\beta\in\Z^N\setminus\{0\}} z(x_\alpha+x_\beta,t_J+t_{j}) \omega(\beta,h) \\
&\geq z(x_\alpha,t_J+t_{j}) \bigg(1-\tau \sum_{\beta\in\Z^N\setminus\{0\}} \omega(\beta,h)\bigg)\geq \frac{1}{2}z(x_\alpha,t_J+t_{j}),
\end{align*}
where the last inequality follows from \eqref{CFL1}.
We also have that, for some $\xi\in[t_J+t_j,t_J+t_{j+1}]$, the following estimate holds:
\begin{align*}
  \partial_\tau \mathcal{A}(t_j)  = \frac{\mathcal{A}(t_{j+1})-\mathcal{A}(t_j)}{\tau}= \lambda \xi^{\lambda-1} \geq \lambda (t_J+t_{j+1})^{\lambda-1} =\lambda (t_J+t_{j}+\tau)^{\lambda-1} \geq \frac{\lambda}{2} (t_J+t_{j})^{\lambda-1},
\end{align*}
where the last inequality follows from the fact that $J\geq1$. Finally, by Theorem \ref{convergencia-pb-lineal}, we also have
\begin{align*}
    \mathcal{A}(t_j)^p z^p(x_\alpha,t_J+t_j) = (t_J+t_j)^{\lambda p} z^p(x_\alpha,t_J+t_{j}) \leq C^{p-1} (t_J+t_j)^{p\lambda-(p-1)\frac{N} {2s}}z(x_\alpha,t_J+t_j).
\end{align*}
Let us denote $\beta\coloneqq p\lambda-(p-1)\frac{N} {2s}$. Thus, from \eqref{eq.super}, we get
\begin{align*}
  \partial_\tau \overline u(x_\alpha,t_j)-L_h \overline{u}(x_\alpha,t_j)- \overline u^p(x_\alpha,t_j) &\geq z(x_\alpha,t_J+t_j) \Big( \frac{\lambda}{4} (t_J+t_{j})^{\lambda-1} -C^{p-1} (t_J+t_j)^{\beta} \Big)  \\
  &= z(x_\alpha,t_J+t_j) (t_J+t_j)^{\beta} \Big( \frac{\lambda}{4} (t_J+t_{j})^{\lambda-1-\beta} -C^{p-1} \Big).
\end{align*}
Here we just need to note that $\lambda-1-\beta>0$ if $\lambda <\frac{N}{2s}-\frac{1}{p-1}$. Note that we can always choose $\lambda>0$ satisfying this since we are in the range $p>1+\frac{2s}{N}$, where $\frac{N}{2s}-\frac{1}{p-1}>0$. Thus, we choose $\lambda<\min\{1,\frac{N}{2s}-\frac{1}{p-1}\}$ to ensure that $\lambda \in(0,1)$. Finally, we choose $J$ big enough such that
$
(t_J)^{\lambda-1-\beta} \geq 4C^{p-1}/\lambda,
$
and conclude that
\[
  \partial_\tau \overline u(x_\alpha,t_j)-L_h \overline{u}(x_\alpha,t_j)- \overline u^p(x_\alpha,t_j) \quad \textup{for all} \quad t_j\in [0,\infty), \, x_\alpha \in h\Z^N,
\]
which is the end of the proof.
\end{proof}

We will show now that in the range $p<1+\frac{2s}{N}$, all nonnegative nontrivial solutions blow up in finite time under some condition on the scaling of the first eigenvalue $\lambda_{h,\Omega}$ of $L_h$ as the domain $\Omega$ widens. More precisely, we will require $\lambda_{h,\Omega_R} \leq C R^{-2s}$ for some $s\in(0,1]$ where $\Omega_R=R\Omega$ for $R>0$ (see Lemma \ref{lem:groeigRsecond} and Lemma \ref{lem:decayeigfrac}  for sufficient conditions on $\omega(\alpha,h)$ to satisfy this assumption).

\begin{lemma}[Solutions blow up for $p<1+\frac{2s}{N}$] \label{todobum}
Assume \eqref{as:omega1}, \eqref{as:omega2}, \eqref{as:omega3} and \eqref{CFL1} and let $\Omega=B_1(0)$.
Let $\lambda_{h,\Omega}$ be the first eigenvalue of $L_h$ in $\Omega$.
Assume also that $\lambda_{h,\Omega_R} \leq C R^{-2s}$ for some $s\in(0,1]$.  Then, for $p<1+\frac{2s}{N}$, any nontrivial nonnegative solution of \eqref{eq-discreta} blows up in finite time.
\end{lemma}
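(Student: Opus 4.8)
The plan is to use Kaplan's method (Lemma \ref{kapplan}) on the family of expanding balls $\Omega_R=R\Omega=B_R(0)$, and to exploit the fact that when $p<1+\frac{2s}{N}$ the blow-up threshold $(\lambda_{h,\Omega_R})^{1/(p-1)}$ decays strictly faster (in $R$) than the Kaplan functional associated with the solution. Thus, for $R$ large enough the Kaplan criterion will be met at the initial time, forcing blow-up.

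First I would fix, for each $R>0$, the first eigenfunction $\phi_R$ of $L_h$ on $\Omega_R$ normalized by $\|\phi_R\|_{\ell^1(h\Z^N)}=1$. By Proposition \ref{prop:eig} it can be taken nonnegative with $\lambda_{h,\Omega_R}>0$, and by Theorem \ref{thm:rear} (this is where \eqref{as:omega3} is used) it is radially nonincreasing with $\phi_R(0)=\|\phi_R\|_{\ell^\infty(h\Z^N)}$. Writing the Kaplan functional at the initial time as
\[
I_R(0)=h^N\!\!\sum_{x_\alpha\in\Omega_R}\varphi(x_\alpha)\phi_R(x_\alpha),
\]
Lemma \ref{kapplan} (applied with $\Omega_R$ in place of $\Omega$) guarantees that $u$ blows up in finite time as soon as $I_R(0)>(\lambda_{h,\Omega_R})^{1/(p-1)}$ for a single value of $R$. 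So the task reduces to producing such an $R$.

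The two estimates I would establish are a lower bound for $\phi_R(0)$ and a matching lower bound for $I_R(0)$. For the former, the normalization together with $0\le\phi_R\le\phi_R(0)$ gives
\[
1=h^N\!\!\sum_{x_\alpha\in\Omega_R}\phi_R(x_\alpha)\le \phi_R(0)\,h^N\#(\Omega_R\cap h\Z^N)\le C_N\,\phi_R(0)\,R^N,
\]
so that $\phi_R(0)\ge c\,R^{-N}$ (with $c$ depending on the fixed $h$). Since $u$ is nontrivial and nonnegative we may pick $x_{\alpha_0}$ with $\varphi(x_{\alpha_0})>0$; setting $\psi_R=\phi_R/\phi_R(0)$ and using that $\psi_R\to 1$ (in particular pointwise) as $R\to\infty$ by Lemma \ref{lem:limrenomeig}—whose conclusion only requires $\lambda_{h,\Omega_R}\to 0$, which is guaranteed here by the hypothesis $\lambda_{h,\Omega_R}\le C R^{-2s}$—I obtain, for $R$ large along the relevant subsequence,
\[
I_R(0)\ge h^N\varphi(x_{\alpha_0})\,\phi_R(0)\,\psi_R(x_{\alpha_0})\ge \tfrac{1}{2}\,c\,h^N\varphi(x_{\alpha_0})\,R^{-N}.
\]
On the other hand the hypothesis yields $(\lambda_{h,\Omega_R})^{1/(p-1)}\le C\,R^{-2s/(p-1)}$.

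Finally I would compare exponents. The condition $p<1+\frac{2s}{N}$ is exactly $\frac{2s}{p-1}>N$, so $R^{-N}$ decays strictly slower than $R^{-2s/(p-1)}$; hence the displayed lower bound for $I_R(0)$ eventually exceeds the threshold, and fixing one such large $R$ and invoking Lemma \ref{kapplan} completes the argument. The main obstacle is the lower bound on the Kaplan functional: everything hinges on controlling $\phi_R$ from below on the support of the data, which requires both the volume-counting bound $\phi_R(0)\gtrsim R^{-N}$ and the flattening $\psi_R\to 1$ supplied by Lemma \ref{lem:limrenomeig} (and ultimately by the rearrangement Theorem \ref{thm:rear}). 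Once these are in hand the exponent comparison is immediate.
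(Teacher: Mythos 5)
Your proof is correct and follows the paper's strategy in all essentials: Kaplan's method (Lemma \ref{kapplan}) on the expanding balls $\Omega_R$, the volume-counting bound $\|\phi_{h,\Omega_R}\|_{\ell^\infty(h\Z^N)}\geq c\,R^{-N}$ (which rests on Theorem \ref{thm:rear}), the eigenvalue decay $\lambda_{h,\Omega_R}\leq C R^{-2s}$, and the observation that $p<1+\tfrac{2s}{N}$ is exactly $\tfrac{2s}{p-1}>N$, so the Kaplan criterion holds at $t=0$ for $R$ large. The one step where you diverge is how you make the eigenfunction see the data: the paper simply translates, noting that the equation and the grid $h\Z^N$ are invariant under grid translations, so one may assume $u(0,0)>0$ and then bound $h^N\sum_{x_\alpha\in\Omega_R}u(x_\alpha,0)\phi_{h,\Omega_R}(x_\alpha)\geq h^N u(0,0)\,\phi_{h,\Omega_R}(0)=h^N u(0,0)\,\|\phi_{h,\Omega_R}\|_{\ell^\infty(h\Z^N)}$ with no further machinery. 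You instead keep $x_{\alpha_0}$ fixed and invoke Lemma \ref{lem:limrenomeig} to get $\phi_{h,\Omega_R}(x_{\alpha_0})\geq\tfrac12\phi_{h,\Omega_R}(0)$ along a subsequence; this is legitimate, and your remark that the proof of that lemma only uses $\lambda_{h,\Omega_R}\to0$ (supplied here directly by hypothesis, whereas its stated hypotheses are conditions on the weights) is exactly the justification needed. The trade-off: the paper's translation trick is more elementary and keeps this lemma self-contained, while your route imports the compactness/Liouville argument behind Lemma \ref{lem:limrenomeig}, which the paper reserves for the critical case $p=1+\tfrac{2s}{N}$ (Theorem \ref{fujitabum}); on the other hand, your argument never uses translation invariance of the scheme, which is a (very mild) gain in generality.
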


\begin{proof}
Without loss of generality we can assume that $u(0,0)>0$. This is due to the fact that he solution is nontrivial and nonnegative and the equation is translation-invariant.

Let $\phi_{h,\Omega_R}$ an eigenfunction of $L_h$ in $\Omega_R$ associated to the first eigenvalue $\lambda_{h,\Omega_R}$ and such that $\|\phi_{h,\Omega_R}\|_{\ell^1(h\Z^N)}=1$. Let us recall now that, by Lemma \ref{kapplan}, if
\begin{equation}\label{eq.kapplan}
\sum_{x_\alpha\in\Omega_R} u (x_\alpha,0) \phi_{h,\Omega_R}(x_\alpha) > (\lambda_{h,\Omega_R})^{\frac{1}{p-1}},
\end{equation}
then $u$ blows up. Let us observe that
\begin{align*}
1&= \|\phi_{h,\Omega_R}\|_{\ell^\infty(h\Z^N)} h^N\sum_{x_\alpha\in\Omega_R}  \frac{\phi_{h,\Omega_R}(x_\alpha)}{\|\phi_{h,\Omega_R}\|_{\ell^\infty(h\Z^N)}} \leq \|\phi_{h,\Omega_R}\|_{\ell^\infty(h\Z^N)} h^N\sum_{x_\alpha\in\Omega_R} 1 \leq \tilde{C} \|\phi_{h,\Omega_R}\|_{\ell^\infty(h\Z^N)}R^{N},
\end{align*}
that is,
\begin{align}\label{eq:linfboundauto}\|\phi_{h,\Omega_R}\|_{\ell^\infty(h\Z^N)}\geq R^{-N}/\tilde{C}.
\end{align}
Now recall that  we are under the hypothesis of Theorem \ref{thm:rear} the eigenfunction reaches its maximum at the origin, $\|\phi_{h,\Omega_R}\|_{\ell^\infty(h\Z^N)}=\phi_{h,\Omega_R}(0)$.
Then,
\begin{align*}
\textstyle \sum_{x_\alpha\in\Omega_R} u (x_\alpha,0) \phi_h(x_\alpha) &> u(0,0) \phi_h(0)=u(0,0) \|\phi_h\|_{\ell^\infty(h\Z^N)} \geq  u(0,0)R^{-N}/\tilde{C}.
\end{align*}
And on the other hand, by hypothesis, $
 (\lambda_{h,\Omega_R})^{\frac{1}{p-1}} \leq C^{\frac{1}{p-1}} R^{-\frac{2s}{p-1}}$.
Thus, \eqref{eq.kapplan} is satisfied if
$
u(0,0)R^{-N} \geq \tilde{C}C^{\frac{1}{p-1}} R^{-\frac{2s}{p-1}},
$
which holds for $R$ big enough if $N<\frac{2s}{p-1}$, that is, $p<1+\frac{2s}{N}$.
\end{proof}

To formulate the following result, we need to consider the grid in time in a more precise way. A solution $u(x_\alpha,t_j)$ of \eqref{eq-discreta} is defined for all
\[
\textstyle x_\alpha \in h\Z^N, \quad t_0=0 \quad \textup{and} \quad t_j=\sum_{k=0}^{j-1} \tau_k \quad \textup{with} \quad \tau_j:= \tau \min\{1, \|u(\cdot,t_j)\|_{\ell^\infty(h\Z^N)}^{1-p}\}.
\]
Note that the time grid is generated together with the solution. For notational convenience, given a solution $u$ of \eqref{eq-discreta}, consider the time grid  $\mathcal{T}_{\tau}=\bigcup_{j=0}^\infty \{t_j\}$, so that $u:h\Z^N\times \mathcal{T}_\tau \to \R_+$.
Finally, we define the $\ell^p$ norm associated to such functions by
\[
\|u\|_{\ell^p(h\Z^N\times \mathcal{T}_\tau)}^p= h^{N} \sum_{j=0}^{\infty} \tau_j \sum_{x_\alpha \in h\Z^N}u(t_j,x_\alpha)^p.
\]

\begin{remark}
    Note that contrary to the $\ell^p$ spaces in regular grids, where the inclusion $\ell^p\subset \ell^\infty$ holds, it
    might a priori not be true in this case. This is due to the fact that, if $u(x_\alpha,t_j)\to+\infty$ as $j\to+\infty$, then $\tau_j\to0$ as $j\to+\infty$, and there could be compensations that still make the $\ell^p$-norm finite.
\end{remark}

\begin{lemma}\label{no globales}
Assume \eqref{as:omega1}, \eqref{CFL1} and $p=1+\frac{2s}{N}$.  Let $L_h$ have a Fourier symbol $m(\xi)$ such that \eqref{eq:bddabovebis} holds. If $u\in \ell^p(h\Z^N\times \mathcal{T}_\tau)$ is a nonnegative global solution of \eqref{eq-discreta}, then $u$ is identically zero.
\end{lemma}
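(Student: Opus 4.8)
The plan is to show that a nontrivial nonnegative global solution cannot lie in $\ell^p(h\Z^N\times\mathcal T_\tau)$ at the critical exponent, so that $u\equiv0$ is forced. Since $u^p\ge0$, the scheme \eqref{eq-discreta} makes $u$ a supersolution of its linear part: writing $E_k=I+\tau_k L_h$, which is order-preserving and positivity-preserving under \eqref{CFL1}, one gets $u(\cdot,t_j)\ge z(\cdot,t_j)$, where $z$ solves the linear problem \eqref{ecuacion lineal} on the \emph{same} grid $\mathcal T_\tau$ with $z(\cdot,0)=\varphi$. If $\varphi\equiv0$ then $u\equiv0$ and we are done, so I would assume $\varphi\not\equiv0$; replacing $\varphi$ by the one-point minorant $\varphi(x_{\beta_0})\mathds{1}_{\{x_{\beta_0}\}}$ at a point where $\varphi>0$ (order-preservation gives $u\ge z\ge z_0$), I may assume $0<M_0:=\|\varphi\|_{\ell^1(h\Z^N)}<\infty$. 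Because $m(0)=0$, the per-step relation $\mathcal F_h[z(\cdot,t_{k+1})]=(1+\tau_k m(\xi))\mathcal F_h[z(\cdot,t_k)]$ (cf.\ \eqref{enfourier}) yields both mass conservation $\|z(\cdot,t_j)\|_{\ell^1(h\Z^N)}=M_0$ and $\mathcal F_h[z](\xi,t_j)=\prod_{k=0}^{j-1}(1+\tau_k m(\xi))\,\mathcal F_h[\varphi](\xi)$.

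The heart of the argument is a sharp $\ell^p$ lower bound: because the lower symbol bound \eqref{eq:bddabovebis} prevents $z$ from spreading too fast, it decays no faster in $\ell^p$ than the fractional heat kernel, namely
\[
\|z(\cdot,t_j)\|_{\ell^p(h\Z^N)}^p\ \ge\ c\,M_0^p\,t_j^{-N(p-1)/2s}\ =\ c\,M_0^p\,t_j^{-1}\qquad(j\ \text{large}),
\]
the last equality being exactly the critical relation $p-1=2s/N$. I would prove this by duality against a positive-definite test kernel. Let $G$ be the kernel with $\widehat G(\xi)=e^{-2K_1|\xi|^{2s}}$ (so $G>0$, $\int G=1$) and set $G_t(x)=t^{-N/2s}G(xt^{-1/2s})$, so $\widehat{G_t}(\xi)=e^{-2K_1 t|\xi|^{2s}}$. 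By \eqref{CFL1} and \eqref{eq:bddabovebis}, for $|\xi|$ small each factor satisfies $1+\tau_k m(\xi)\ge 1-\tau_k K_1|\xi|^{2s}\ge e^{-2\tau_k K_1|\xi|^{2s}}$, whence $\prod_k(1+\tau_k m(\xi))\ge e^{-2K_1 t_j|\xi|^{2s}}$ there. Using the Plancherel identity for $\mathcal F_h$, the fact that the periodization error of $\widehat{G_{t_j}}$ on $Q_h$ is $O(e^{-ct_j})$, that $\mathcal F_h[\varphi]$ is continuous with $\mathcal F_h[\varphi](0)=M_0$ (so $\ge M_0/2$ near $\xi=0$), and that the integrand is exponentially suppressed away from the origin, I obtain
\[
\langle z(\cdot,t_j),G_{t_j}\rangle_{\ell^2(h\Z^N)}=\frac{1}{(2\pi)^N}\int_{Q_h}\textstyle\prod_k(1+\tau_k m(\xi))\,\mathcal F_h[\varphi](\xi)\,\widehat{G_{t_j}}(\xi)\,d\xi\ \gtrsim\ M_0\,t_j^{-N/2s}.
\]
A Riemann-sum estimate gives the matching upper bound $\|G_{t_j}\|_{\ell^{p'}(h\Z^N)}^{p'}\le C\,t_j^{-N(p'-1)/2s}$, and Hölder's inequality $\langle z,G_{t_j}\rangle\le\|z\|_{\ell^p}\|G_{t_j}\|_{\ell^{p'}}$ then yields the displayed bound after simplifying $N/2s-N(p'-1)/(2sp')=N(p-1)/(2sp)$ and $p\cdot N(p-1)/(2sp)=1$.

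With this in hand I would conclude: since $u\ge z\ge0$ and the solution is global (hence $t_j\to\infty$),
\[
\|u\|_{\ell^p(h\Z^N\times\mathcal T_\tau)}^p\ge\sum_{j}\tau_j\,\|z(\cdot,t_j)\|_{\ell^p(h\Z^N)}^p\ \gtrsim\ M_0^p\sum_{j\ge j_0}\frac{\tau_j}{t_j}\ \ge\ M_0^p\sum_{j\ge j_0}\int_{t_j}^{t_{j+1}}\frac{dt}{t}=+\infty ,
\]
the harmonic-type divergence being the signature of the critical exponent. This contradicts $u\in\ell^p(h\Z^N\times\mathcal T_\tau)$, so $M_0=0$, forcing $\varphi\equiv0$ and therefore $u\equiv0$.

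The main obstacle is precisely this sharp $\ell^p$ lower bound on the linear flow. The naive route via Plancherel controls only $\|z(\cdot,t_j)\|_{\ell^2}\gtrsim M_0 t_j^{-N/4s}$, which is too lossy to recover the critical rate $t_j^{-1}$ for general $p\neq2$; the point of testing against a kernel whose symbol $e^{-2K_1 t_j|\xi|^{2s}}$ dominates $\prod_k(1+\tau_k m(\xi))$ near the origin is exactly that it converts the one-sided symbol bound \eqref{eq:bddabovebis} into genuine control of the spatial spreading, and it sidesteps the sign problems inherent in inverting a Fourier lower bound. A secondary technical point is that the time steps $\tau_k$ vary along the grid, but this is harmless since only the total $\sum_k\tau_k=t_j$ enters the multiplier estimate (and one never needs boundedness of $u$, only global existence, i.e.\ $\sum_j\tau_j=\infty$).
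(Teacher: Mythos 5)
Your proof is correct, but it takes a genuinely different route from the paper's. The paper runs a rescaled test-function (Mitidieri--Pohozaev type) argument directly on the nonlinear equation: it multiplies \eqref{eq-discreta} by $h^N\tau_j\psi(t_j)\phi(x_\alpha)$, with cutoffs at scales $R^{2s}$ in time and $R/\veps$ in space, sums by parts, uses \eqref{eq:bddabovebis} through the semidiscrete Fourier transform (Lemma \ref{lema fourier}) to get $|L_h\phi(x_\alpha)|\lesssim(\veps/R)^{2s}$, and applies H\"older against the $u^p$ term; at the critical exponent the surviving constant is $C\veps^{N(p-1)^2/p}$, and letting $R\to\infty$, then $\veps\to0$, then using arbitrariness of $J$ forces the $\ell^p$ norm to vanish. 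You instead compare $u$ from below with the linear flow $z$ on the same adaptive grid, prove the sharp decay lower bound $\|z(\cdot,t_j)\|_{\ell^p(h\Z^N)}^p\gtrsim M_0^p\,t_j^{-N(p-1)/2s}=M_0^p\,t_j^{-1}$ by duality against a stable kernel, and conclude from divergence of $\sum_j\tau_j/t_j$, using that global existence is equivalent to $t_j\to\infty$. Both proofs hinge on \eqref{eq:bddabovebis}, but in opposite ways: the paper uses $|m(\xi)|\le K_1|\xi|^{2s}$ to make $L_h$ act weakly on spread-out test functions, while you use $m(\xi)\ge-K_1|\xi|^{2s}$ to bound the evolution multiplier $\prod_k(1+\tau_k m(\xi))\ge e^{-2K_1 t_j|\xi|^{2s}}$ from below near the origin. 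Your approach makes the role of criticality more transparent (the exponent $N(p-1)/2s=1$ is exactly the threshold at which $\sum_j\tau_j/t_j$ diverges) and dispenses with the space-time cutoff bookkeeping and with Corollary \ref{global-bounded}; its price is the analytic input on the kernel $G$ (Poisson summation to compare $\mathcal{F}_h[G_{t_j}]$ with $\widehat{G_{t_j}}$, and a Riemann-sum bound for $\|G_{t_j}\|_{\ell^{p'}(h\Z^N)}$ -- positivity of $G$ is in fact never needed). Two details deserve care in a full write-up: (i) after passing to the one-point minorant you should translate that point to the origin, so that $\mathcal{F}_h[\varphi]$ is a positive constant and the Fourier integrand is genuinely nonnegative, rather than arguing with ``$\mathcal{F}_h[\varphi]\ge M_0/2$ near $0$'', which ignores the complex phase; (ii) the inequality $1-\tau_k K_1|\xi|^{2s}\ge e^{-2\tau_k K_1|\xi|^{2s}}$ holds only where $\tau_k K_1|\xi|^{2s}\le 1/2$, i.e.\ on a fixed ball $|\xi|\le\xi_0(\tau,K_1)$ independent of $j$, which is all you use but should be stated. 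Neither point affects the validity of the argument.
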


\begin{proof}
Let us argue by contradiction and assume that there exists a nonnegative nontrivial global solution $u$ of \eqref{eq-discreta} such that $u\in \ell^p(h\Z^N\times \mathcal{T}_\tau)$ . The fact that $u$ is global implies, by Corollary \ref{global-bounded}, that $u$ is bounded. Let us write $
B\coloneqq\sup_{j\in \N}\|u(\cdot,t_j)\|_{\ell^\infty(h\Z^N)}$
and note that, by \eqref{CFL1}, we have that
\begin{equation}\label{cota tau}
\min\{1,B^{1-p}\}\tau \leq \tau_j\leq \tau.
\end{equation}
Let us also consider a nonincreasing cut-off function $\rho\in C_{\textup{c}}^\infty([0,\infty))$  such that $\rho(\eta)=1$ on $\eta \in[0,1]$ and $\rho(\eta)=0$ for $\eta>2$. Let also $\overline{\rho}(x)=\rho(|x|)$ for $x\in \R^N$. For $R>0$, $\veps>0$ and $J\in\N$ given we define

$$
\psi(t) =\rho\left(\frac{t-t_{J}}{R^{2s}}\right)\quad \textup{for} \quad t\geq t_J \quad \textup{and} \quad \psi(t)=0 \quad \textup{for} \quad t< t_J,
$$
and
$
\phi(x)=\overline{\rho}\left(\varepsilon \frac{x}{R}\right)$ for  $x\in \R^d$.
Multiplying equation \eqref{eq-discreta} by $h^N\tau_j\psi(t_j)\phi(x_\alpha)$ and summing over $ h\mathbb Z^N \times \{j\ge J\}$ we get
\begin{align*}
h^N\sum_{j= J}^{\infty}& \tau_j \sum_{x_\alpha\in h\mathbb Z^N} u^{p}(x_\alpha,t_j)\psi(t_j)\phi(x_\alpha) \\&=
h^N\sum_{j= J}^{\infty} \tau_j \sum_{x_\alpha\in h\mathbb Z^N} \partial_{\tau_j} u(x_\alpha,t_j) \psi(t_j)\phi(x_\alpha)
-h^N\sum_{j= J}^\infty\tau_j  \sum_{x_\alpha\in h\mathbb Z^N} L_h u(x_\alpha,t_j) \psi(t_j)\phi(x_\alpha)
=:  S_1+S_2.
\end{align*}
Note that the above sums are finite since $u$ is bounded and $\psi\phi$ is compactly supported in $\R^N\times[0,\infty)$.
Let us bound $S_1$ first. To do so, we sum by parts in the time variable to get

\begin{align*}
    S_1=&\displaystyle - h^N\sum_{x_\alpha\in h\mathbb Z^N}
 u(x_\alpha,t_{J}) \psi(t_{J})\phi(x_\alpha) - h^N \sum_{j= J}^\infty \sum_{x_\alpha\in h\mathbb Z^N} u(x_\alpha,t_{j+1})\phi(x_\alpha) \left(\psi(t_{j+1}) -\psi(t_{j})\right)\\
\leq & - h^N\sum_{j= J}^\infty \tau_j \sum_{x_\alpha\in h\mathbb Z^N} u(x_\alpha,t_{j+1})\phi(x_\alpha) \partial_{\tau_j} \psi(t_j).
\end{align*}
We observe now that $\partial_{\tau_j} \psi(t_j)=0$ if either $t_j>t_{J}+2R^{2s}$ or $t_{j+1}\le t_{J}+R^{2s}$ (i.e. $t_{j}\le t_{J}+R^{2s}-\tau_j$). Thus, \eqref{cota tau} implies that the  support of $\partial_{\tau_j}\psi$ is included in $I_R=(t_{J}+R^{2s}- \tau, t_{J}+2 R^{2s})$. Moreover, since $\psi$ is nonincreasing for $t\geq t_J$, using the Taylor expansion
$$
-\partial_{\tau_j} \psi(t_j)= |\partial_{\tau_j} \psi(t_j)|\le \frac{\|\rho\|_{C^1(\R)}}{R^{2s}}\mathds{1}_{I_R}(t_j).
$$
for all $t_j\geq t_J$. Since $\phi(x_\alpha)\leq  \mathds{1}_{B_{\frac{2R}{\varepsilon}}(0)}(x_\alpha)$, we can use H\"older's inequality to get
\begin{align*}
    S_1\leq &\frac{\|\rho\|_{C^1(\R)}}{R^{2s}} h^N \sum_{t_j \in I_R} \tau_j \sum_{|x_\alpha| \leq \frac{2R}{\veps}} u(x_\alpha,t_{j+1})\\
    &\leq \frac{\|\rho\|_{C^1(\R)}}{R^{2s}} \bigg( h^N \sum_{t_j \in I_R} \tau_j \sum_{|x_\alpha| \leq \frac{2R}{\veps}} u^p(x_\alpha,t_{j+1})\bigg)^{\frac{1}{p}}\bigg( h^N \sum_{t_j \in I_R} \tau_j \sum_{|x_\alpha| \leq \frac{2R}{\veps}} 1\bigg)^{\frac{p-1}{p}}\\
    &\leq C  \frac{\|\rho\|_{C^1(\R)}}{R^{2s}} |I_R|^{\frac{p-1}{p}} |B_{\frac{2R}{\veps}}|^{\frac{p-1}{p}} \bigg( h^N \sum_{t_j \in I_R} \tau_j \sum_{|x_\alpha| \leq \frac{2R}{\veps}} u^p(x_\alpha,t_{j+1})\bigg)^{\frac{1}{p}}
    \\
    &\leq \tilde{C} \veps^{-N\frac{p-1}{p}} \bigg( h^N \sum_{t_j \in I_R} \tau_j \sum_{x_\alpha \in h\Z^N} u^p(x_\alpha,t_{j+1})\bigg)^{\frac{1}{p}}.
\end{align*}
Finally, we note that $\mathds{1}_{I_R}$ converges to zero pointwise as $R\to+\infty$. Thus, since $u\in \ell^p(h\Z^N\times \mathcal{T}_\tau)$, we have that $\limsup_{R\to+\infty}S_1 \leq 0$.

We bound $S_2$ by using the symmetry of $L_h$ to get $
S_2=h^N\sum_{j= J}^\infty\tau_j  \sum_{x_\alpha\in h\mathbb Z^N} u(x_\alpha,t_j) \psi(t_j)(-L_h \phi(x_\alpha))$.
Observe that if  $|x_\alpha|>2R/\veps$ then $\phi(x_\alpha)=0$, and thus  $-L_h \phi(x_\alpha)\leq0$. Then,
\begin{align*}
S_2&\leq h^N\sum_{j= J}^\infty\tau_j  \sum_{|x_\alpha|<\frac{2R}{\veps}} u(x_\alpha,t_j) \psi(t_j)(-L_h \phi(x_\alpha))\leq h^N\sum_{j= J}^\infty\tau_j  \sum_{|x_\alpha|<\frac{2R}{\veps}} u(x_\alpha,t_j) \psi(t_j)|L_h \phi(x_\alpha)| .
\end{align*}
To bound $L_h\phi(x_\alpha)$ for $|x_\alpha|<2R/\varepsilon$ we use  the Fourier variables. More precisely, by scaling of the discrete Fourier transform (Lemma \ref{lema fourier}), we get
\begin{align*}
|L_h\phi(x_\alpha)| =&\left| \frac{1}{(2\pi)^N} \int_{Q_h} m(\xi) \mathcal F_h [\phi](\xi) e^{i\xi\cdot x_\alpha}d\xi \right| \leq  \Big(\frac{R}{2\pi\varepsilon}\Big)^N
\displaystyle\int_{Q_h} |m(\xi)| \left|\mathcal F_{\frac{\varepsilon }{R}h}[\overline\rho](\frac{R}{\varepsilon} \xi)\right| d\xi \\
= & \displaystyle \frac{1}{(2\pi)^N}
\int_{Q_{\frac{R}{\veps}h}} \left|m(\frac{\varepsilon}{R}\eta)\right| \left|\mathcal F_{\frac{\varepsilon h}{R}} [\overline\rho](\eta) \right|d\eta \leq \displaystyle K_1 \left(\frac{\varepsilon}{R}\right)^{2s}
\int_{Q_{\frac{R}{\veps}h}} |\eta|^{2s} \left|\mathcal F_{\frac{\varepsilon h}{R}} [\overline\rho](\eta) \right|d\eta=K_1 \Big(\frac{\varepsilon}{R}\Big)^{2s} I_1,
\end{align*}
where the last inequality follows from the estimate on the symbol given by \eqref{eq:bddabovebis}.
Now, using again Lemma \ref{lema fourier}, together with the fact that $\overline{\rho}\in C_\textup{c}^\infty(\mathbb{R}^N)$,
\begin{align*}
    I_1&\leq \int_{Q_{\frac{R}{\veps}h}} |\eta|^{2s} |\mathcal F_{\frac{\varepsilon h}{R}} [\overline\rho](\eta)-\widehat{\overline\rho}(\eta)|d\eta + \int_{\mathbb R^N} |\eta|^{2s} |\widehat{\overline\rho}(\eta)|d \eta \\
    &\leq O\Big(\Big(\frac{\veps h}{R}\Big)^M\Big) \int_{Q_{\frac{R}{\veps}h}} |\eta|^{2s}d \eta + \int_{\mathbb R^N} |\eta|^{2s} |\widehat{\overline\rho}(\eta)|d \eta \leq O\Big(\Big(\frac{\veps h}{R}\Big)^{M-N-2s}\Big) + \int_{\mathbb R^N} |\eta|^{2s} |\widehat{\overline\rho}(\eta)|d \eta,
\end{align*}
with $M>N+2s$. Then, for $R$ large enough, we get that $I_1\leq C$ for some positive constant $C$.  Thus,
$
|L_h\phi(x_\alpha)|\le \widetilde{C} \left(\varepsilon/R\right)^{2s}.
$
This implies that
\begin{align*}
S_2&\le C \left(\frac{\varepsilon}{R}\right)^{2s}
h^N\sum_{t_j\in  I_R}  \tau_j\sum_{|x_\alpha|<\frac{2R}{\veps}}  u(x_k,t_j)\leq C \left(\frac{\varepsilon}{R}\right)^{2s} |I_R|^{\frac{p-1}{p}} |B_{\frac{2R}{\veps}}|^{\frac{p-1}{p}} \bigg( h^N \sum_{j=J}^\infty \tau_j \sum_{x_\alpha\in h\Z^N } u^p(x_\alpha,t_{j+1})\bigg)^{\frac{1}{p}}\\
&\leq C \veps^{N\frac{(p-1)^2}{p}}\bigg( h^N \sum_{j=J}^\infty \tau_j \sum_{x_\alpha\in h\Z^N } u^p(x_\alpha,t_{j+1})\bigg)^{\frac{1}{p}}.
\end{align*}
Summarizing, we have obtained
\[
h^N\sum_{j= J}^{\infty} \tau_j \sum_{x_\alpha\in h\mathbb Z^N} u^{p}(x_\alpha,t_j)\psi(t_j)\phi(x_\alpha) \leq S_1 + C \veps^{N\frac{(p-1)^2}{p}}\bigg( h^N \sum_{j=J}^\infty \tau_j \sum_{x_\alpha\in h\Z^N } u^p(x_\alpha,t_{j+1})\bigg)^{\frac{1}{p}}.
\]
Taking limits as $R\to+\infty$ leads to
\[
\bigg(h^N\sum_{j= J}^{\infty} \tau_j \sum_{x_\alpha\in h\mathbb Z^N} u^{p}(x_\alpha,t_j) \bigg)^{\frac{p-1}{p}}\leq  C \veps^{N\frac{(p-1)^2}{p}}.
\]
Thanks to the arbitrariness of $J$ and $\veps$, we conclude that $u$ must be zero, which is a contradiction with the fact that $u$ is a nontrivial solution.
\end{proof}

\begin{theorem}\label{fujitabum}
Assume \eqref{as:omega1}, \eqref{as:omega3}, \eqref{as:omega2}, \eqref{CFL1} and $p=1+\frac{2s}{N}$. Let $\lambda_{h,B_R}$ be the first eigenvalue of $L_h$ in $B_R$ for $R>0$ and an associated eigenfunction $\phi_{h,B_R}$ (i.e $\phi$ satisfies \eqref{eq:disceig}) such that $\|\phi_{h,B_R}\|_{\ell^1(h\Z^N)}=1$. Let the following assumptions hold:
\begin{enumerate}[\rm (a)]
\item $\lambda_{h,B_R} \leq C R^{-2s}$ for some $s\in(0,1]$.
\item$L_h$ has a Fourier symbol $m(\xi)$ such that \eqref{eq:bddabovebis} holds.
\end{enumerate} Then every nonnegative nontrivial solution of \eqref{eq-discreta}  blows up in finite time.
\end{theorem}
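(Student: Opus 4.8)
The plan is to argue by contradiction: assume some nontrivial nonnegative solution $u$ of \eqref{eq-discreta} is global in time, and show that it must either be trivial (via Lemma \ref{no globales}) or blow up (via the Kaplan method of Lemma \ref{kapplan}). By Corollary \ref{global-bounded}, $u$ is bounded. The organizing quantity is the discrete mass $\mathcal M(t_j):=\|u(\cdot,t_j)\|_{\ell^1(h\Z^N)}\in(0,+\infty]$. Summing the equation in space and using the symmetry \eqref{as:omega1} of the weights, so that $h^N\sum_{x_\alpha}L_h u(x_\alpha,t_j)=0$ whenever $u(\cdot,t_j)\in\ell^1(h\Z^N)$ (interchanging the order of summation, legitimate since $\omega(\cdot,h),u(\cdot,t_j)\in\ell^1$), I would first establish the mass identity
\[
\mathcal M(t_{j+1})-\mathcal M(t_j)=\tau_j\,\|u(\cdot,t_j)\|_{\ell^p(h\Z^N)}^p ,
\]
valid whenever $u(\cdot,t_j)\in\ell^1(h\Z^N)$. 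This shows $\mathcal M$ is nondecreasing and, upon telescoping, that $\sum_{j\ge0}\tau_j\|u(\cdot,t_j)\|_{\ell^p(h\Z^N)}^p=\lim_J\mathcal M(t_J)-\mathcal M(0)$, which is precisely $\|u\|_{\ell^p(h\Z^N\times\mathcal T_\tau)}^p$. (If $\varphi\in\ell^1(h\Z^N)$, then $u(\cdot,t_j)\in\ell^1(h\Z^N)$ for every $j$, since $L_h$ maps $\ell^1$ to $\ell^1$ and $u^p\in\ell^1$ because $u$ is bounded.)

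With this identity the argument splits according to $\sup_j\mathcal M(t_j)$. If $\sup_j\mathcal M(t_j)<+\infty$, then every $u(\cdot,t_j)\in\ell^1(h\Z^N)$, the identity applies at every step, and the telescoped sum is finite, i.e.\ $u\in\ell^p(h\Z^N\times\mathcal T_\tau)$. Since assumption (b) is exactly \eqref{eq:bddabovebis}, Lemma \ref{no globales} forces $u\equiv0$, contradicting nontriviality.

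The remaining case is $\sup_j\mathcal M(t_j)=+\infty$, and this is where assumption (a) enters through the rescaled eigenfunctions. By time-translation invariance of \eqref{eq-discreta}, Lemma \ref{kapplan} may be applied with initial datum $u(\cdot,t_n)$ for any $n$. Writing $\phi_{h,B_R}$ for the first eigenfunction on $B_R$ normalized by $\|\phi_{h,B_R}\|_{\ell^1(h\Z^N)}=1$, and $\psi_R:=\phi_{h,B_R}/\|\phi_{h,B_R}\|_{\ell^\infty(h\Z^N)}$, Theorem \ref{thm:rear} gives $\psi_R(0)=1$, the bound \eqref{eq:linfboundauto} gives $\|\phi_{h,B_R}\|_{\ell^\infty(h\Z^N)}\ge R^{-N}/\tilde C$, and Lemma \ref{lem:limrenomeig} gives $\psi_R\to1$ pointwise along a subsequence. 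The key arithmetic is that at the critical exponent $2s/(p-1)=N$, so assumption (a) yields $(\lambda_{h,B_R})^{1/(p-1)}\le C^{1/(p-1)}R^{-N}$, matching the scaling of $\|\phi_{h,B_R}\|_{\ell^\infty}$. Hence the Kaplan functional satisfies
\[
h^N\!\!\sum_{x_\alpha\in B_R}\!u(x_\alpha,t_n)\phi_{h,B_R}(x_\alpha)=\|\phi_{h,B_R}\|_{\ell^\infty(h\Z^N)}\,h^N\!\!\sum_{x_\alpha\in B_R}\!u(x_\alpha,t_n)\psi_R(x_\alpha)\ge\frac{R^{-N}}{\tilde C}\,h^N\!\!\sum_{x_\alpha\in B_R}\!u(x_\alpha,t_n)\psi_R(x_\alpha),
\]
so the blow-up condition of Lemma \ref{kapplan} holds as soon as $h^N\sum_{x_\alpha\in B_R}u(x_\alpha,t_n)\psi_R(x_\alpha)>\tilde C\,C^{1/(p-1)}$. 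Since $\sup_j\mathcal M(t_j)=+\infty$, I would pick $n$ with $\mathcal M(t_n)$ above the fixed threshold $2\tilde C\,C^{1/(p-1)}$ (taking $n=0$ if $\mathcal M\equiv+\infty$), and let $R\to\infty$ along the subsequence: because $0\le\psi_R\le1$ and $\psi_R\to1$, Fatou's lemma gives $\liminf_R h^N\sum_{x_\alpha\in B_R}u(x_\alpha,t_n)\psi_R(x_\alpha)\ge\mathcal M(t_n)$. Thus for $R$ large the threshold is exceeded, Lemma \ref{kapplan} forces blow-up, contradicting globality and completing the proof.

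The main obstacle is the exact cancellation of scales in the critical case: both the Kaplan threshold $(\lambda_{h,B_R})^{1/(p-1)}$ and the eigenfunction amplitude $\|\phi_{h,B_R}\|_{\ell^\infty}$ decay like $R^{-N}$, so the comparison survives only at the level of $R$-independent constants. Making this precise requires the sharp lower bound \eqref{eq:linfboundauto} together with the normalization $\psi_R\to1$ from Lemma \ref{lem:limrenomeig}, so that once $\mathcal M(t_n)$ exceeds an absolute constant the Kaplan mechanism fires uniformly in $R$; the mass identity is exactly what converts the failure of $\ell^p(h\Z^N\times\mathcal T_\tau)$-integrability into the unboundedness of $\mathcal M$ that feeds this mechanism.
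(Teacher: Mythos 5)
Your proposal is correct and follows essentially the same route as the paper: the paper applies Lemma \ref{kapplan} in contrapositive form at each time step and combines it with \eqref{eq:linfboundauto}, Lemma \ref{lem:limrenomeig} and the critical cancellation $\frac{2s}{p-1}=N$ to obtain a uniform-in-time $\ell^1$ bound, then telescopes the mass identity to conclude $u\in\ell^p(h\Z^N\times\mathcal{T}_\tau)$ and invokes Lemma \ref{no globales}. Your case split on $\sup_j\|u(\cdot,t_j)\|_{\ell^1(h\Z^N)}$ is just a contrapositive reorganization of that same argument (Kaplan fires when the mass is unbounded; Lemma \ref{no globales} kills the bounded-mass case), with the identical ingredients playing identical roles.
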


\begin{proof}
Assume, by contradiction, that there exists a global solution $u$. Then, by Lemma \ref{kapplan}, for any $j\in \N$, we have that
\[
h^N\sum_{x_\alpha\in B_R} u(x_\alpha,t_j) \phi_{h,B_R}(x_\alpha) \leq (\lambda_{h,B_R})^{\frac{1}{p-1}},
\]
with $\lambda_{h,B_R}$ being the first eigenvalue of $L_h$ in $B_R$ and $\phi_{h,B_R}$ an associated eigenfunction (i.e. satisfying \eqref{eq:disceig}) and such that $\|\phi_{h,B_R}\|_{\ell^1(h\Z^N)}=1$.
Thus,
\[
h^N\sum_{x_\alpha\in B_R} u(x_\alpha,t_j) \frac{\phi_{h,B_R}(x_\alpha)}{\|\phi_{h,B_R}\|_{\ell^\infty(h\Z^N)}} \leq \frac{(\lambda_{h,B_R})^{\frac{1}{p-1}}}{\|\phi_{h,B_R}\|_{\ell^\infty(h\Z^N)}}\leq C \frac{R^N}{R^{\frac{2s}{p-1}}}=C,
\]
where we have used \eqref{eq:linfboundauto} to bound $\|\phi_{h,B_R}\|_{\ell^\infty(h\Z^N)}^{-1}$, and also used the decay of the eigenvalue. We take now limit as $R\to+\infty$ and use Lemma \ref{lem:limrenomeig}, which ensures that $\phi_{h,B_R}/\|\phi_{h,\Omega_R}\|_{\ell^\infty(h\Z^N)} \to 1$ pointwise (up to a subsequence), to get $
h^N\sum_{x_\alpha\in h \Z^N} u(x_\alpha,t_j) \leq C$,
that is, we have that $u(\cdot,t_j)\in\ell^1(h\Z^N)$ uniformly in time. We can the sum in space \eqref{eq-discreta} to obtain
$\|u(\cdot,t_{j+1})\|_{\ell^1(h\Z^N)} - \|u(\cdot,t_{j})\|_{\ell^1(h\Z^N)} = \tau_j\|u(\cdot,t_{j})\|_{\ell^p(h\Z^N)}^p$.
Summing in time the previous identity we get
\begin{align*}
\|u\|_{\ell^p(h\Z^N\times \mathcal{T}_\tau)}^p&= \sum_{j=0}^\infty (\|u(\cdot,t_{j+1})\|_{\ell^1(h\Z^N)} - \|u(\cdot,t_{j})\|_{\ell^1(h\Z^N)}) \leq \lim_{J\to+\infty}\|u(\cdot,t_{J})\|_{\ell^1(h\Z^N)} \leq C.
\end{align*}
that is, $u\in\ell^p(h\Z^N\times \mathcal{T}_\tau)$. Thus, $u$ must be zero by Lemma \ref{no globales}, which is a contradiction with the fact that $u$ is nontrivial.
\end{proof}

\begin{proof}[Proof of Theorem \ref{teo.fujita1}]
Let us assume that either \eqref{as:weight4} or \eqref{as:weight5} hold. Lemma \ref{lem:simbolp} gives us that in both cases the Fourier symbol satisfies hypothesis \eqref{eq:bddabove} and \eqref{eq:bddabovebis}. On the other hand, Lemmas \eqref{lem:groeigRsecond} and \eqref{lem:decayeigfrac} implies
$$
\lambda_{h,B_R}\le C R^{2s}, \qquad s\in(0,1].
$$
Then, our result  follows by Lemmas \ref{hay globales}, \ref{todobum} and Theorem \ref{fujitabum}.
\end{proof}

\section{Convergence properties}\label{sec:convprop}

In this Section we prove Theorems \ref{thm:convergencebeforeblowup} (convergence of the solutions) and \ref{thm:conbwtimes} (convergence of the blow-up time). We started proving the convergence of the solutions, as usual to prove the convergence theorems we assume that the solution of the continuous problem is regular enough to obtain consistency.

\begin{proof}[Proof of Theorem \ref{thm:convergencebeforeblowup}]
We define the error functions
\begin{align*}
    E(x_\alpha,t_j)=w(x_\alpha,t_j)- u(x_\alpha,t_j) \quad \textup{and} \quad  E(t_j)= \sup_{x_\alpha\in h\mathbb{Z}^N}|E(x_\alpha,t_j)|.
\end{align*}
By the definitions of $\Lambda_1$ and $\Lambda_2$ given in \eqref{Lambdas}, we can subtract the equations for $w$ and $u$ to get
\[
\partial_{\tau_j} E(x_\alpha,t_{j})= L_h E(x_\alpha,t_j) + (w^p(x_\alpha,t_j)-u^{p}(x_\alpha,t_j))+ \Lambda_1(x_\alpha,t_j)+ \Lambda_2(x_\alpha,t_j),
\]
which, after reordering, yields
\begin{align*}
E(x_\alpha,t_{j+1})=&E(x_\alpha,t_{j})\bigg(1- \tau_j \sum_{\beta\in \mathbb{Z}\setminus\{0\}}\omega(\beta,h)\bigg) + \tau_j\sum_{\beta\in \mathbb{Z}\setminus\{0\}}E(x_\alpha+x_\beta,t_j)\omega(\beta,h) \\
&+\tau_j(w^p(x_\alpha,t_j)-u^{p}(x_\alpha,t_j))+ \tau_j\Lambda_1(x_\alpha,t_j)+ \tau_j\Lambda_2(x_\alpha,t_j).
\end{align*}
Let us consider the above identity for for every $t_j \in [0,\tilde{T}]$ with
\begin{equation}\label{eq:timegood}
\tilde{T}= \max\{t_j \in [0,T-\rho]\,:\, E(t_j)\leq 1 \}.
\end{equation}
By \eqref{CFL1}, we can take absolute values, take supremums and use the triangle inequality to get that, for every $t_j\in [0,T-\rho]$, we have
\begin{align*}
E(t_{j+1})\leq& E(t_{j})\bigg(1- \tau_j \sum_{\beta\in \mathbb{Z}\setminus\{0\}}\omega(\beta,h)\bigg) + \tau_jE(t_j) \sum_{\beta\in \mathbb{Z}\setminus\{0\}}\omega(\beta,h)+ \tau_j\sup_{x_\alpha\in h \mathbb{Z}^N}|w^p(x_\alpha,t_j)-u^{p}(x_\alpha,t_j)| \\
&+\tau_j \sup_{x_\alpha\in h \mathbb{Z}^N}|\Lambda_1(x_\alpha,t_j)|+ \tau_j\sup_{x_\alpha\in h \mathbb{Z}^N}|\Lambda_2(x_\alpha,t_j)|\\
\leq& E(t_j) + \tau_jM_j E(t_j) + \tau_j R_{h,\tau},
\end{align*}
with
$
M_j=p\max\{\sup_{x_\alpha\in h \mathbb{Z}^N}|w(x_\alpha,t_j)|,\sup_{x_\alpha\in h \mathbb{Z}^N}|u(x_\alpha,t_j)|\}^{p-1}$ and $R_{h,\tau}=\|w\|_{\mathcal{H}_{T-\rho}}(\varrho_1(h)+\varrho_2(\tau))$.
Note that, on one hand, $\sup_{x_\alpha\in h \mathbb{Z}^N}|w(x_\alpha,t_j)| \leq \|w\|_{L^\infty(\R^N\times[0,T-\rho])}$,
and on the other hand, by \eqref{eq:timegood}, \[\sup_{x_\alpha\in h \mathbb{Z}^N}|u(x_\alpha,t_j)|\leq \sup_{x_\alpha\in h \mathbb{Z}^N}|w(x_\alpha,t_j)|+ \sup_{x_\alpha\in h\Z^N}|E(x_\alpha,t_j)| \leq \|w\|_{L^\infty(\R^N\times[0,T-\rho])}+1.
\]
Thus, $M_j \leq p (\|w\|_{L^\infty(\R^N\times[0,T-\rho])}+1)^{p-1}:=K$
and $\partial_{\tau_j}E(t_{j})\leq K E(t_j) + R_{h,\tau}$.
It is easy to check that
\[
E(t_j)\leq \frac{R_{h,\tau}}{K} (e^{Kt_j}-1) \leq R_{h,\tau} e^{K(T-\rho)}.
\]
using that the explicit solution of the ODE problem $y'(t)= K y(t)+R_{h,\tau}$ with $y(0)=0$ is given by $y(t)=\frac{R}{K} (e^{Kt}-1)$, which is a convex function, and thus, $\partial_{\tau_j} y(t_j) \geq y'(t_j)$. Hence, we have arrived to the estimate
\[
E(t_j) \leq e^{p (\|w\|_{L^\infty(\R^N\times[0,T-\rho])}+1)^{p-1}(T-\rho)} \|w\|_{\mathcal{H}_{T-\rho}} (\varrho_1(h)+\varrho_2(\tau)).
\]
Finally, we note that we can always choose $h$ and $\tau$ small enough such that $E(t_j)\leq 1$, and thus, $\tilde{T}=\max\{t_j \in [0,T-\rho]\}$, which concludes the prove.
\end{proof}

Using the above convergence result, we can prove the convergence of the discrete blow-up time to the continuous one, assuming some upper and lower bounds on the blow-up times.

\begin{lemma}\label{lem:convbutimes}Assume \eqref{as:omega1}, \eqref{as:consistency}  and \eqref{CFL1}. Let $w$ be a blow-up solution of \eqref{1.1} with blow-up time $T$. Assume $w\in \mathcal{H}_{T-\rho}$ for all $\rho\in(0,T)$. Let also $u$ be a blow-up solution of \eqref{eq-discreta} with  blow-up  time $T_{h,\tau}$. Additionally, assume that there exist two constants $C_1$ and $C_2$ independent of $h$ and $\tau$ such that
\begin{equation}\label{eq:uplowbound}
\|w(\cdot,t)\|_{L^\infty(\R^N)} \geq C_1(T-t)^{-\frac{1}{p-1}} \quad \textup{and} \quad \|u(\cdot,t_j)\|_{\ell^\infty(h\Z^N)} \leq C_2(T_{h,\tau}-t_j)^{-\frac{1}{p-1}}.
\end{equation}
Then, $\lim_{h,\tau\to0^+} T_{h,\tau}=T$.
\end{lemma}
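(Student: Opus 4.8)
The plan is to combine the lower bound $\liminf_{h,\tau\to0}T_{h,\tau}\ge T$, which is already the final assertion of Theorem \ref{thm:convergencebeforeblowup}, with a matching upper bound $\limsup_{h,\tau\to0}T_{h,\tau}\le T$. Thus the whole argument reduces to producing, for each fixed $\rho\in(0,T)$, an estimate of the form $T_{h,\tau}\le T+C\rho$ valid for all $h,\tau$ small enough (depending on $\rho$); letting $\rho\to0^+$ then closes the proof.

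To obtain such a bound I would first fix $\rho\in(0,T)$ and locate a convenient grid time just below $T-\rho$. Since $\liminf_{h,\tau\to0}T_{h,\tau}\ge T$, for $h,\tau$ small we have $T_{h,\tau}>T-\rho$, and because consecutive grid times satisfy $t_{j+1}-t_j=\tau_j\le\tau$, the interval $[0,T_{h,\tau})$ is covered by the grid with gaps at most $\tau$. Hence there is a largest grid point $t_{j^\ast}\le T-\rho$, and it obeys $T-\rho-\tau<t_{j^\ast}\le T-\rho$; in particular $\rho\le T-t_{j^\ast}\le\rho+\tau\le 2\rho$ once $\tau\le\rho$. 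Next I would transfer the continuous lower blow-up rate to the discrete solution at $t_{j^\ast}$. The lower bound in \eqref{eq:uplowbound} gives $\|w(\cdot,t_{j^\ast})\|_{L^\infty(\R^N)}\ge C_1(T-t_{j^\ast})^{-1/(p-1)}\ge C_1(2\rho)^{-1/(p-1)}$. Because $w\in\mathcal H_{T-\rho}$ is uniformly continuous on $\R^N\times[0,T-\rho]$ and every point of $\R^N$ lies within $\tfrac{\sqrt N}{2}h$ of a grid node, the discrete supremum $\sup_{x_\alpha}|w(x_\alpha,t_{j^\ast})|$ differs from $\|w(\cdot,t_{j^\ast})\|_{L^\infty(\R^N)}$ by at most the spatial modulus of continuity of $w$ evaluated at $\tfrac{\sqrt N}{2}h$, which tends to $0$ as $h\to0$. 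Combining this with the convergence estimate of Theorem \ref{thm:convergencebeforeblowup}, namely $\sup_{x_\alpha}|w(x_\alpha,t_{j^\ast})-u(x_\alpha,t_{j^\ast})|\le C(\varrho_1(h)+\varrho_2(\tau))\to0$, I get, for $h,\tau$ small enough depending on $\rho$,
\[
\|u(\cdot,t_{j^\ast})\|_{\ell^\infty(h\Z^N)}\ \ge\ \tfrac{1}{2}\,C_1\,(2\rho)^{-1/(p-1)}.
\]

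Finally I would insert this into the discrete upper blow-up rate from \eqref{eq:uplowbound}, $\|u(\cdot,t_{j^\ast})\|_{\ell^\infty(h\Z^N)}\le C_2(T_{h,\tau}-t_{j^\ast})^{-1/(p-1)}$, and solve for $T_{h,\tau}-t_{j^\ast}$. Since $x\mapsto x^{-(p-1)}$ is decreasing on the positive axis, this yields $T_{h,\tau}-t_{j^\ast}\le C_3\rho$ with $C_3=2\,(2C_2/C_1)^{p-1}$ depending only on $p,C_1,C_2$, whence $T_{h,\tau}\le t_{j^\ast}+C_3\rho\le T+(C_3-1)\rho$. Taking $\limsup_{h,\tau\to0}$ and then $\rho\to0^+$ gives $\limsup_{h,\tau\to0}T_{h,\tau}\le T$, and together with the liminf bound the equality follows.

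I expect the main obstacle to be the bookkeeping around the adaptive grid and the order of limits: one must first send $h,\tau\to0$ with $\rho$ fixed, so that both the consistency error $\varrho_1(h)+\varrho_2(\tau)$ and the grid-spacing error are negligible against the $\rho$-dependent threshold $C_1(2\rho)^{-1/(p-1)}$, and only afterwards let $\rho\to0^+$. The one genuinely analytic point is the passage from the $L^\infty(\R^N)$ norm of $w$ to its supremum over grid nodes, which relies on the uniform spatial regularity of $w$ on $[0,T-\rho]$ encoded in the assumption $w\in\mathcal H_{T-\rho}$.
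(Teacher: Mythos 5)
Your proof is correct and follows essentially the same route as the paper: pick the last grid time $t_J\le T-\rho$, combine the continuous lower blow-up rate in \eqref{eq:uplowbound} with the convergence estimate of Theorem \ref{thm:convergencebeforeblowup} to bound $\|u(\cdot,t_J)\|_{\ell^\infty(h\Z^N)}$ from below, invert the discrete upper rate to control $T_{h,\tau}-t_J$, and finally let $\rho\to0^+$ after sending $h,\tau\to0$. The only difference is that you explicitly justify the passage from $\|w(\cdot,t_J)\|_{L^\infty(\R^N)}$ to the supremum over grid nodes via a modulus of continuity, a step the paper's proof silently absorbs into the inequality $\|w(\cdot,t_J)\|_{\ell^\infty(h\Z^N)}\ge C_1(\rho+\tau)^{-1/(p-1)}$, so your version is, if anything, slightly more careful on that point.
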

\begin{proof}
Note that, as consequence of Theorem \ref{thm:convergencebeforeblowup}, $u$ cannot blow up before time $T$. Let $\rho>0$ and $t_J=\max\{t_j\in[0,T-\rho]\}$. Note that, since $t_{J+1}=t_J+\tau_J$, we have that $t_J\leq T-\rho<t_{J+1}$ and thus, $|(T-\rho)-t_J|\leq |t_{J+1}-t_{J}|=\tau_J$ (that is $|(T-\rho)-t_J|\to 0$ as $\tau\to0^+$). Therefore, for $\tau$ small enough, we have
\begin{align*}
    |T-T_{h,\tau}|&\leq |T-t_J|+|T_{h,\tau}-t_J|\leq \rho+ |(T-\rho)-t_J|+|T_{h,\tau}-t_J|\leq \rho+ \tau_J+|T_{h,\tau}-t_J|.
\end{align*}
We note now that, by \eqref{eq:uplowbound} and \eqref{CFL1},
\[
\|w(\cdot,t_J)\|_{L^\infty(\R^N)} \geq \frac{C_1}{(T-t_J)^{\frac{1}{p-1}}} \geq \frac{C_1}{(\rho+\tau_J)^{\frac{1}{p-1}}}
 \geq \frac{C_1}{(\rho+\tau)^{\frac{1}{p-1}}}
 \quad \textup{and} \quad |T_{h,\tau}-t_J| \leq \frac{C_2^{p-1}}{\|u(\cdot,t_J)\|_{L^\infty(\R^N)}^{p-1}}.
\]
By Theorem \ref{thm:convergencebeforeblowup}, we also have
\begin{align*}
    \|u(\cdot,t_J)\|_{\ell^\infty(h\Z^N)}&\geq\|w(\cdot,t_J)\|_{\ell^\infty(h\Z^N)}- \|u(\cdot,t_J)-w(\cdot,t_J)\|_{\ell^\infty(h\Z^N)} \\
    &\geq  \frac{C_1}{(\rho+\tau)^{\frac{1}{p-1}}} - e^{p (\|w\|_{L^\infty(\R^N\times[0,T-\rho])}+1)^{p-1}(T-\rho)} \|w\|_{\mathcal{H}_{T-\rho}} (\varrho_1(h)+\varrho_2(\tau)) .
\end{align*}
Thus,
$
\limsup_{h,\tau\to 0^+} |T-T_{h,\tau}|\le \rho+\left(C_2/C_1\right)^{p-1}\rho^{\frac{1}{p-1}}.
$
Since $\rho$ is arbitrary, the proof of convergence of blow-up times is completed.
\end{proof}

\begin{proof}[Proof of Theorem \ref{thm:conbwtimes}]
We note that, from Lemma \ref{lem:convbutimes}, we only need to show that \eqref{eq:uplowbound} holds. The blow-up rate for the solution $w$ of \eqref{1.1} follows by comparison with the planar solution $W(x,t) = \left((p-1)(T-t)\right)^{-\frac{1}{p-1}}$, where $T$ is the blow-up time of $w$.
Indeed, if we assume that there exists $t\in(0,T)$ such that
$
\|w(\cdot,t_0)\|_{L^\infty(\R^N)} < \left((p-1)(T-t_0)\right)^{-\frac{1}{p-1}},
$
then, it also holds
$
\|w(\cdot,t_0)\|_{L^\infty(\R^N)} < \left((p-1)(T+\varepsilon-t)\right)^{-\frac{1}{p-1}}
$
for some $\varepsilon$ small enough. But then, by comparison, this implies that
$
\|w(\cdot,T)\|_{L^\infty(\R^N)} < \left((p-1)\varepsilon\right)^{-\frac{1}{p-1}},
$
which is a contradiction with the fact that $T$ is the blow-up time for $w$. Thus, there exists $C_1=C_1(p)>0$ such that
\[
\|w(\cdot,t)\|_{L^\infty(\R^N)} \geq C_1(T-t)^{-\frac{1}{p-1}}
\]
The proof of the upper bound for the blow-up rate for the solution $u$ of \eqref{eq-discreta} requires a more refined technique. We present it now for the two different hypothesis given in Theorem \ref{thm:conbwtimes}.
%

First, let us assume the hypothesis \eqref{lemconbwt-item1}, that is, there exists  $h_0\in(0,1)$ such that $\sup_{h\in(0,h_0)}\sum_{\beta\in \mathbb{Z}^N\setminus\{0\}} \omega(\beta,h)<+\infty$. We can proceed as in the proof of Lemma \ref{lem-bum} to get
\[
\partial_{\tau_j} \|u(\cdot,t_j)\|_{\ell^\infty(h\mathbb{Z}^N)} \geq - \sup_{h\in(0,h_0)}\|\omega(\beta,h)\|_{\ell^1(\mathbb{Z}^N \setminus\{0\})} \|u(\cdot,t_j)\|_{\ell^\infty(h\mathbb{Z}^N)} + \|u(\cdot,t_j)\|_{\ell^\infty(h\mathbb{Z}^N)}^p.
\]
Then, there exists $\overline{t}$ (independent on $h$) such that for all $t_j\geq \overline{t}$ we have that
$
\partial_{\tau_j} \|u(\cdot,t_j)\|_{\ell^\infty(h\mathbb{Z}^N)} \geq \frac{1}{2}\|u(\cdot,t_j)\|_{\ell^\infty(h\mathbb{Z}^N)}^p$.
Thus, we can proceed as in the proof of Lemma \ref{lem-bum} to get that for $\tau\le 1$
\[
\|u(\cdot,t_j)\|_{\ell^\infty(h\mathbb{Z}^N)}  \leq \frac{\tau (1+\frac12 \tau)^{p-1}}{(1+\frac12 \tau)^{p-1}-1}(T_{h,\tau}-t_j)^{\frac{-1}{p-1}}
\leq \frac{3^{p-1}}{3^{p-1}-2^{p-1}}(T_{h,\tau}-t_j)^{\frac{-1}{p-1}},
\]
where in the last inequality we use that the function $\tau\to \frac{\tau (1+\frac12 \tau)^{p-1}}{(1+\frac12 \tau)^{p-1}-1}$ is increasing.

Assume now that hypothesis \eqref{lemconbwt-item2} holds, that is, $
L_h \varphi(x_\alpha) + (1-\veps) \varphi^p(x_\alpha)\geq 0$  for all   $h\in(0,h_0)$  and  $x_\alpha\in h\mathbb{Z}^N$. Consider the function
\[
G(x_{\alpha},t_j) = \partial_{\tau_j} u(x_\alpha,t_j)-\veps u^p(x_\alpha,t_j).
\]
By assumption, $G(x_{\alpha},0)\ge0$. Moreover,
$$
\left(\partial_{\tau_j}  -L_h\right) G(x_\alpha,t_j) = (1-\varepsilon) \partial_{\tau_j} u^p(x_\alpha,t_j) +\varepsilon L_h u^p(x_\alpha,t_j).
$$
By convexity of the power $p$ function, this implies
\begin{align*}
  \left(\partial_{\tau_j}  -L_h\right)G(x_\alpha,t_j) &\geq  (1-\varepsilon)p u^{p-1}(x_\alpha,t_j) \partial_{\tau_j} u(x_\alpha,t_j) +\varepsilon p u^{p-1}(x_\alpha,t_j) L_h u(x_\alpha,t_j)\\
  &=p u^{p-1}(x_\alpha,t_j)\left(\partial_{\tau_j} u(x_\alpha,t_j) -\veps\left(\partial_{\tau_j}- L_h \right)u(x_\alpha,t_j)\right)\\
  &= p u^{p-1}(x_\alpha,t_j)\left(\partial_{\tau_j} u(x_\alpha,t_j) -\veps u^p(x_\alpha,t_j)\right)= p u^{p-1}(x_\alpha,t_j) G(x_\alpha,t_j).
\end{align*}
Thus, by comparison principle, $G(x_\alpha,t_j)\geq0$ for all $x_\alpha\in h\Z^N$ and $t_j\geq0$. This implies, by the definition of $G$, that
$
\partial_{\tau_j} u(x_\alpha,t_j)\geq\veps u^p(x_\alpha,t_j)$.
As before, this implies that
\[
\|u(\cdot,t_j)\|_{\ell^\infty(h\mathbb{Z}^N)}  \leq \frac{\tau (1+\veps \tau)^{p-1}}{(1+\veps \tau)^{p-1}-1}(T_{h,\tau}-t_j)^{\frac{-1}{p-1}}
\leq \frac{(1+\veps)^{p-1}}{(1+\veps)^{p-1}-1}(T_{h,\tau}-t_j)^{\frac{-1}{p-1}},
\]
for all $\tau\leq 1$.
\end{proof}

We conclude this section by completing the proof of Remark \ref{rem:datagood}.

\begin{proof}[Proof of Remark \ref{rem:datagood}]
Indeed, it is clear that
$
L_h u_0(x_\alpha)\ge 0$ for $|x_\alpha|\ge R$.
So in this region we get the desired inequality. Moreover, by the consistence of the operator,
$
L_h u_0 =L u_0+\tau_h+\widetilde\tau_h,
$
where $\tau_h$ and $\widetilde \tau_h$ are the truncation errors of the local and nonlocal parts, respectively. Observe that both truncation errors are bounded and go to zero as $h\to 0^+$.

Now, we consider the region $R-\delta<|x|<R$. Since, $\partial^4_r u_0\ge0$   we get that $\tau_h\ge0$. On the other hand, the continuity of the operator $L$, and the fact that for every $x_0\in \partial B_R(0)$ we have
$$
L u_0 (x_0)= \int_{\R^N} u_0(x_0+y) d\mu(y) \ge \min_{x\in\partial B_R(0)} \int_{\R^N} u_0(x+y) d\mu(y)=A>0,
$$
implies that for $h\le h_0$ small enough $L_h u_0(x)\ge L u_0(x)+\widetilde \tau_h\ge A/2$.

Finally in $|x|<R-\delta$, we get $u_0(x)> B$, then for $h$ small we have $\varepsilon u_0^p+\tau_h+\widetilde \tau_h \ge\frac\varepsilon2 B^p $. Thus,
$
L_h u_0(x)+(1-\varepsilon) u^p =
L u_0(x)+(1-2 \varepsilon) u^p +\varepsilon u^p +\tau_h+\widetilde \tau_h \ge0.
$
\end{proof}

\section{Numerical Experiments}\label{sec:numexp}
In this section, we test our numerical scheme:
\begin{equation*}
\left\{
\begin{aligned}
\partial_{\tau_j} u(x_\alpha,t_j) &= L_h u(x_\alpha,t_j) + u^p(x_\alpha,t_j), && x_\alpha \in h\mathbb{Z}^N,\; t_j \in (0,T), \\
u(x_\alpha,0) &= u_0(x_\alpha), && x_\alpha \in h\mathbb{Z}^N,
\end{aligned}
\right.
\end{equation*}
on interesting special cases in one spatial dimension.  
Our aim is to verify numerically some of our theoretical results, such as the existence of blow-up solutions (Theorem~\ref{teo.fujita1}\eqref{teo.fujita1-blowup}), the blow-up rates (Theorem~\ref{tasas}), the convergence of blow-up times (Theorem~\ref{thm:conbwtimes}), and the existence of global solutions (Theorem~\ref{teo.fujita1}\eqref{teo.fujita1-global}).

Since we have also obtained results on the asymptotic behaviour of the discrete diffusion equation:
\begin{equation*}
\left\{
\begin{aligned}
\partial_{\tau} z(x_\alpha,t_j) - L_h z(x_\alpha,t_j) &= 0, && x_\alpha \in h\mathbb{Z}^N,\; t_j \in \tau\mathbb{N}, \\
z(x_\alpha,0) &= u_0(x_\alpha), && x_\alpha \in h\mathbb{Z}^N,
\end{aligned}
\right.
\end{equation*}
we also present some experiments regarding Theorem~\ref{comportamiento lineal} and Remark~\ref{comportamiento}.

All experiments have been run for $h$ and $\tau$ satisfying \eqref{CFL1}. Since our schemes are naturally posed in the whole space $\mathbb{R}^d$, to numerically compute the solutions we restrict the grids to a sufficiently large bounded spatial domain and set the numerical solution equal to zero outside.



We have conducted experiments using local, zero-order nonlocal, and fractional (nonlocal differential) diffusion operators.  

\begin{itemize}
  \item In the \textbf{local case}, we chose the Laplacian operator:
  \[
  L\phi(x) = \Delta\phi(x).
  \]

  \item In the \textbf{nonlocal zero-order case}, we used the integral operator:
  \[
  L\phi(x) = \int_{-\infty}^{\infty} (\phi(x+y) - \phi(x)) e^{-|y|^2} \, dy.
  \]

  \item In the \textbf{nonlocal differential case}, we used a multiple of the fractional Laplacian defined as
  \[
  L\phi(x) = \textup{P.V.} \int_{|y|>0} \frac{\phi(x+y) - \phi(x)}{|y|^{1+2s}} \, dy, \quad \textup{with} \quad s = \frac{1}{2}.
  \]
\end{itemize}

In each case, we applied the discretization schemes described in Remark~\ref{ejemplos}.

Finally, the initial data used for all experiments was
\[
u_0(x) = \frac{9}{10} (1 - |x|^2)_+,
\]
where $(\cdot)_+$ denotes the positive part.



\subsection{Existence of blow-up solutions}
We present here numerical experiments corresponding to Theorem \ref{teo.fujita1}\eqref{teo.fujita1-blowup}, showing the performance of our numerical scheme in capturing the blow-up phenomenon. We have always chosen $p$ according to the hypotheses of the theorem. As can be seen in Figure \ref{fig:blow-up_solutions}, the blow-up occurs in very different ways for our three examples, yet our scheme is able to capture them accurately. We note that, in order to properly observe the blow-up phenomenon, the $z$-axis in this figure is presented on a logarithmic scale.


\begin{figure}[h!]
  \centering
  \begin{subfigure}[b]{0.32\textwidth}
    \includegraphics[width=\textwidth]{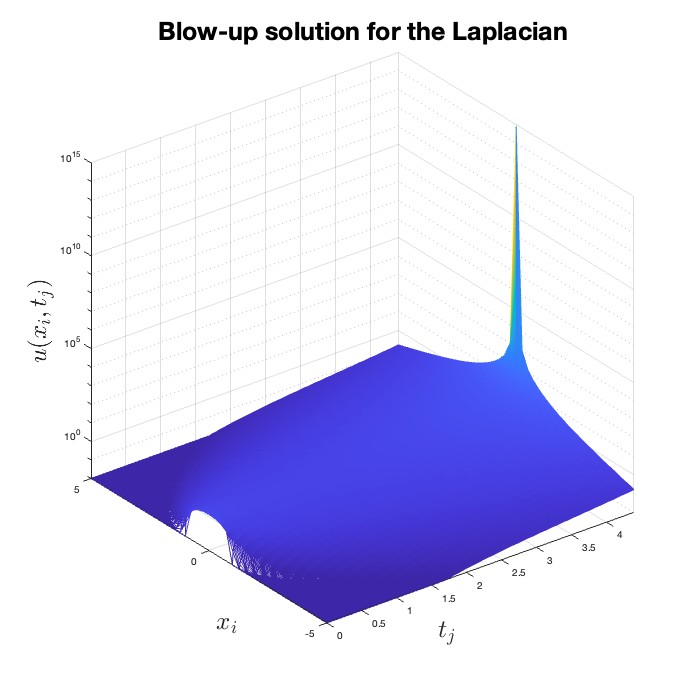}
    \subcaption{$p=2$}
  \end{subfigure}
  \begin{subfigure}[b]{0.32\textwidth}
    \includegraphics[width=\textwidth]{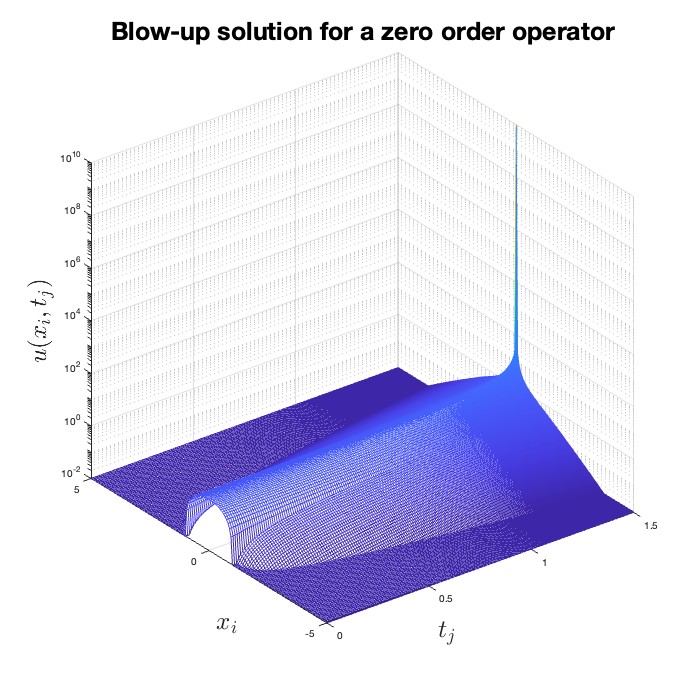}
    \subcaption{$p=2.5$}
  \end{subfigure}
  \begin{subfigure}[b]{0.32\textwidth}
    \includegraphics[width=\textwidth]{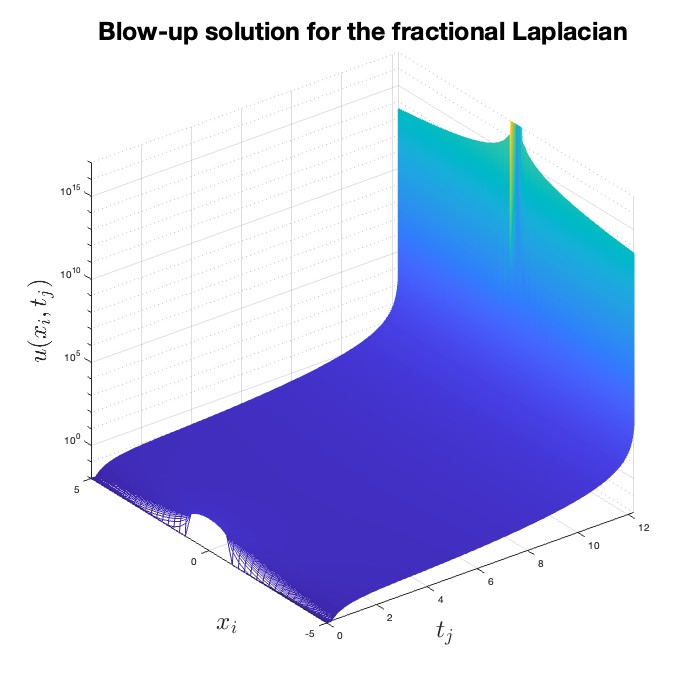}
    \subcaption{$p=1.5$ and $s=0.5$}
  \end{subfigure}
  \caption{Comparison of blow-up solutions for different operators.}
  \label{fig:blow-up_solutions}
\end{figure}

\subsection{Blow-up rates} 
In the presence of blow-up solutions, Theorem \ref{tasas} ensures that  
\[
C_1 (T_{h,\tau}-t_j)^{\frac{-1}{p-1}} \le \|u(\cdot,t_j)\|_{\ell^\infty(h\mathbb{Z}^N)} \le
C_2 (T_{h,\tau}-t_j)^{\frac{-1}{p-1}},
\]  
for $t_j$ large enough. We empirically observe this behavior in Figure \ref{fig:blowuprates}. To properly visualize it, the left figure has both axes in logarithmic scale, while in the right one only the $y$-axis is shown in logarithmic scale.

\begin{figure}[h!]
\includegraphics[width=0.8\textwidth]{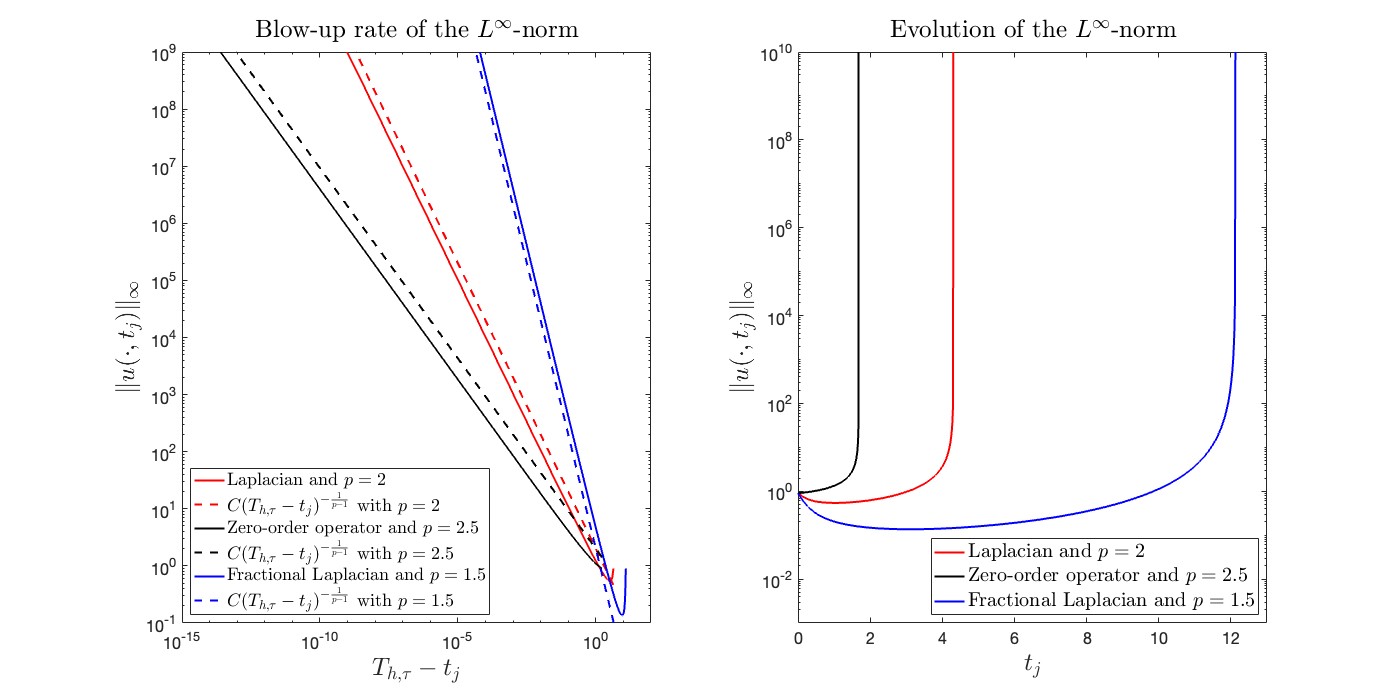}
\caption{Blow-up rates and the evolution of $\|u(\cdot,t_j)\|_\infty$.}
\label{fig:blowuprates}
\end{figure}

\subsection{Convergence of blow-up times} 
Again, in the presence of blow-up solutions, Theorem~\ref{thm:conbwtimes} ensures that the discrete blow-up time obtained from the numerical scheme converges to the actual blow-up time of the continuous problem as the mesh is refined. To provide evidence of this behavior, we present numerical results that support this theoretical prediction in Figure~\ref{fig:convblowuptimes}.

Since, to the best of our knowledge, there are no known explicit blow-up solutions to the nonlinear problem \eqref{1.1} (aside from the trivial ones that arise from the corresponding ordinary differential equation) we adopt a practical strategy to study convergence. Specifically, we compute a reference blow-up time using a highly refined spatial grid with mesh size \( h = h_0 = 2^{-7} \). We then compare this reference value with the blow-up times obtained on coarser grids corresponding to \( h = 2^{-i} \), for \( i = 1,2,3,4,5 \). This comparison allows us to assess how the numerical blow-up time approaches the continuous one as the discretization becomes finer.

To clearly visualize the convergence behavior and estimate the convergence rate, both axes in Figure~\ref{fig:convblowuptimes} are plotted on a logarithmic scale.

\begin{figure}[h!]
\includegraphics[width=0.8\textwidth]{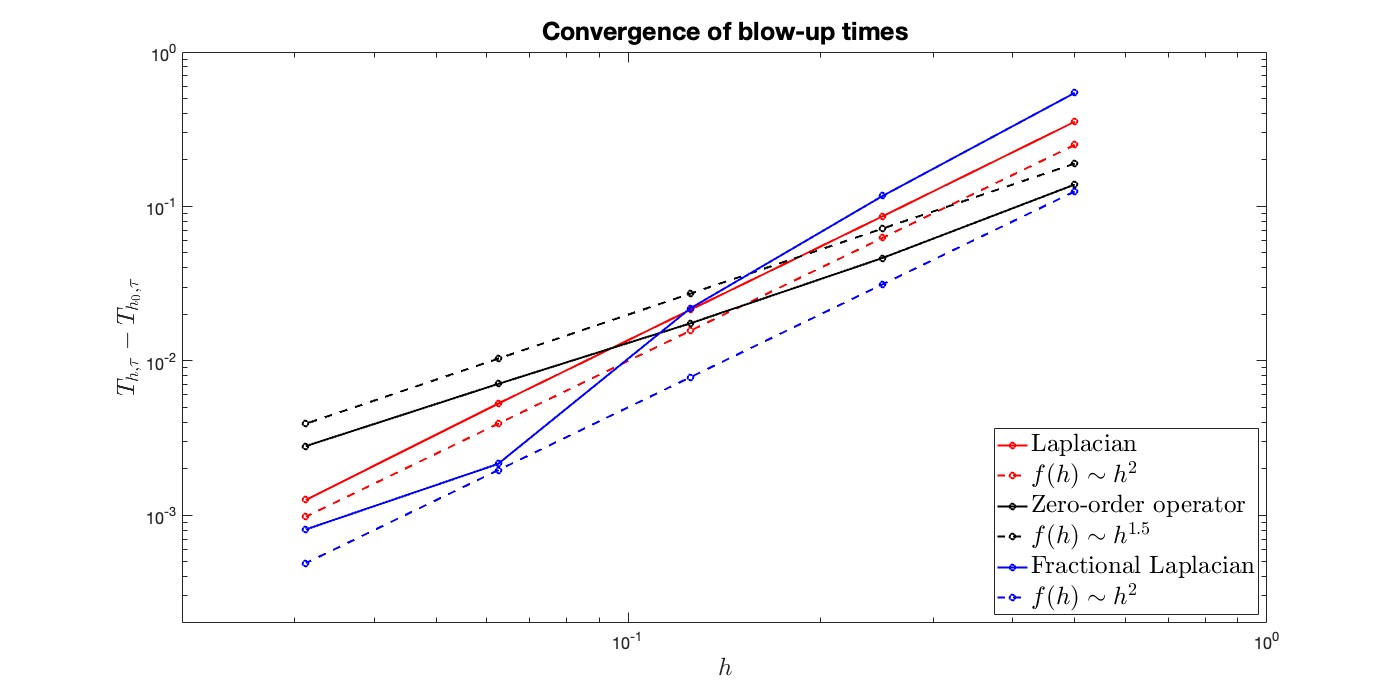}
\caption{Convergence of blow-up times.}
\label{fig:convblowuptimes}
\end{figure}

\subsection{Global solutions} In Figure~\ref{fig:globalsol}, we illustrate the existence of global  solutions as established by Theorem~\ref{teo.fujita1}\eqref{teo.fujita1-global}.
\begin{figure}[h!]
  \centering
  \begin{subfigure}[b]{0.32\textwidth}
    \includegraphics[width=\textwidth]{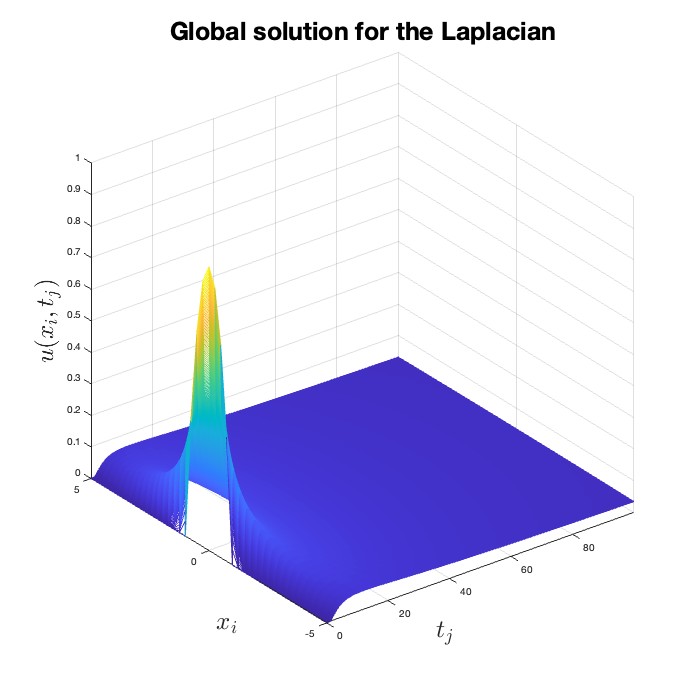}
    \subcaption{$p=5$}
  \end{subfigure}
  \begin{subfigure}[b]{0.32\textwidth}
    \includegraphics[width=\textwidth]{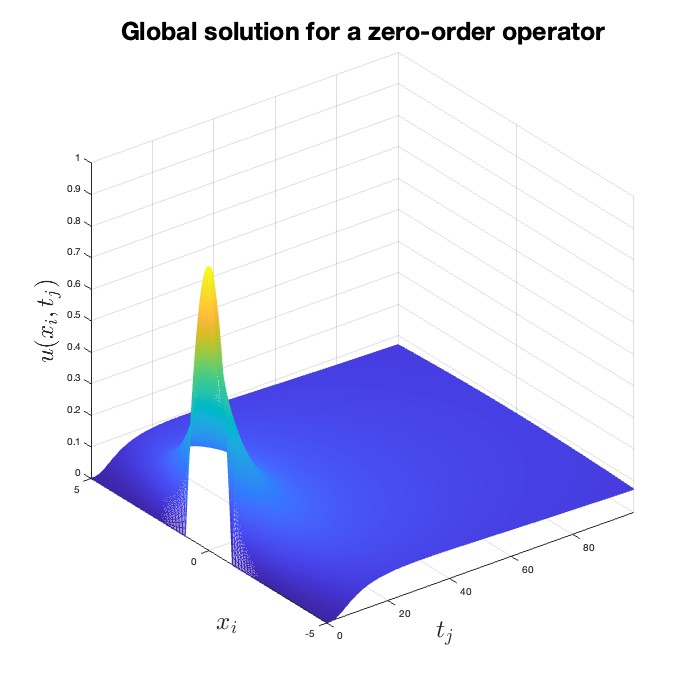}
    \subcaption{$p=5$}
  \end{subfigure}
  \begin{subfigure}[b]{0.32\textwidth}
    \includegraphics[width=\textwidth]{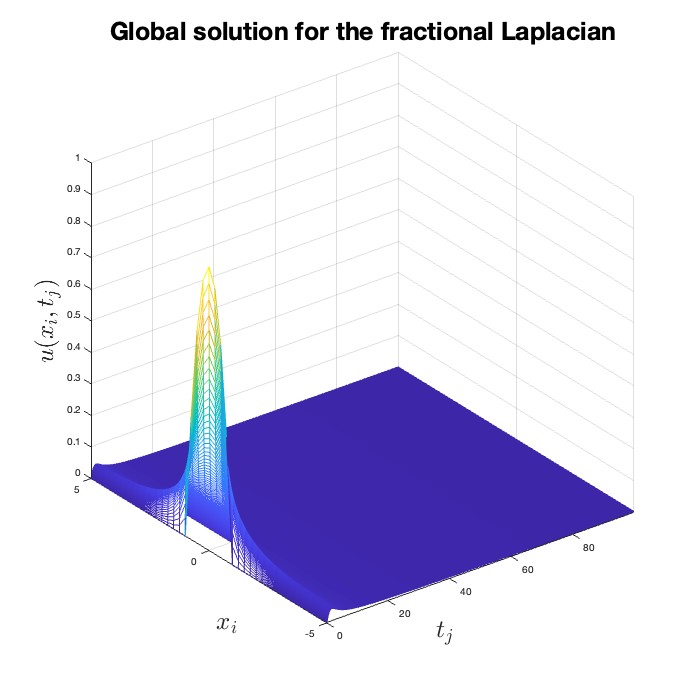}
    \subcaption{$p=5$ and $s=0.5$}
  \end{subfigure}
  \caption{Comparison of global solutions for different operators}
  \label{fig:globalsol}
\end{figure}

\subsection{Asymptotic behaviour of fully discrete discrete diffusion equations}
Finally, we present some experiments illustrating the asymptotic behavior of the discrete diffusion equation \eqref{eq:asbehaintro}, for a fixed spatial step size \( h = 0.5 \). In Figure~\ref{fig:asybeha}, we show numerical evidence supporting the convergence estimate established in Theorem~\ref{comportamiento lineal}.

In this particular cases, the constant \( C \) appearing in the asymptotic estimate is equal to zero, due to the fact that the Fourier symbols associated with the discrete operators satisfy the condition
\[
\lim_{\xi \to 0} \frac{m(\xi)}{|\xi|^{2s}} = -K,
\]
as required by the assumptions stated in Remark~\ref{comportamiento}. This validation confirms that the long-time behaviour of the discrete solution matches the theoretical predictions, even for moderate values of \( h \).

\begin{figure}[h!]
\includegraphics[width=0.8\textwidth]{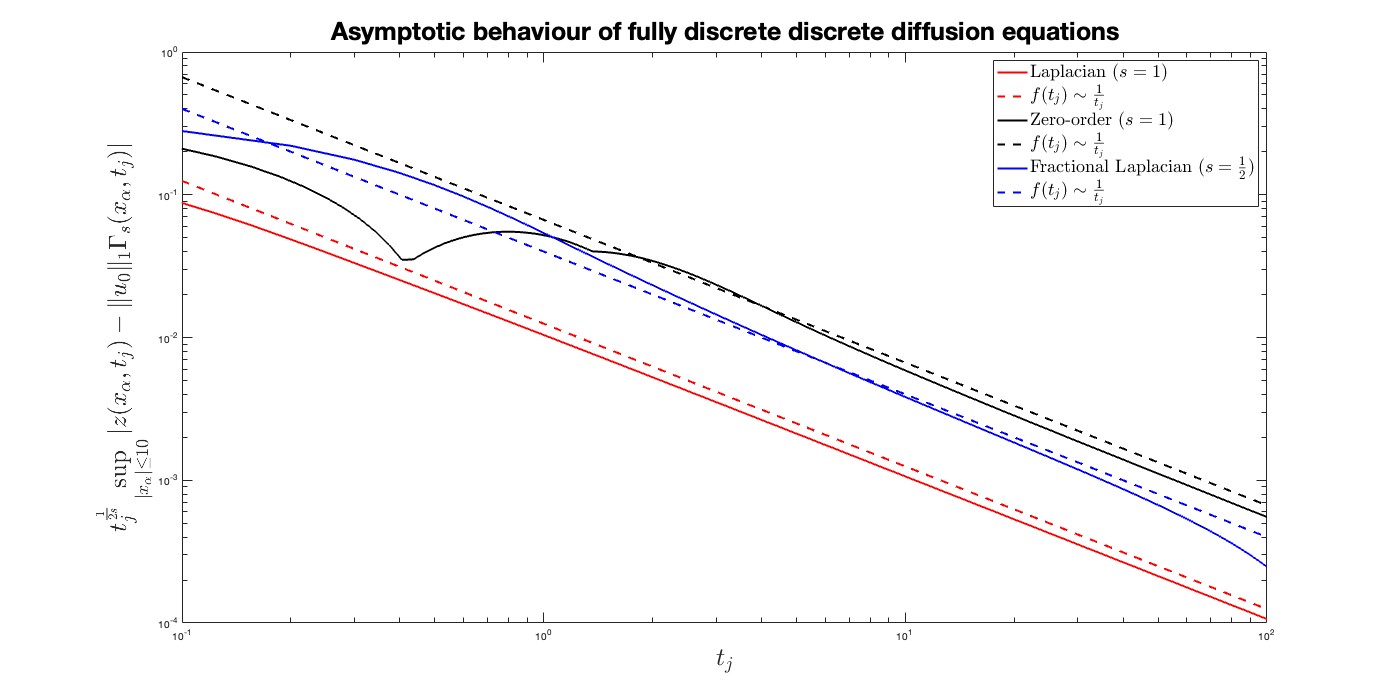}
\caption{Asymptotic behaviour of fully discrete difussion equations.}
\label{fig:asybeha}
\end{figure}

\appendix

\section{An appropriate choice of the time step}\label{sec:choicetau}
Due to the possible blow-up behaviour of \eqref{eq-discreta}, we need to choose an adaptive time step. In order to figure out the appropriate time step, let us consider the solution \eqref{eq-discreta} with constant initial data $\varphi(x_\alpha)=Y^0\in (0,\infty)$ for all $\alpha\in\Z^N$. Then, the solution is given by $U(x_\alpha,t_j)=Y(t_j)$ for all $\alpha\in \Z^N$, where $Y$ satisfies
\begin{equation}\label{eq-discreta-ODE}
\left\{
\begin{array}{ll}
Y(t_{j+1})=Y(t_j)+\tau_j Y^p(t_j),\quad & t_j\in(0,T), \\
Y(t_0)=Y^0. &
\end{array}\right.
\end{equation}
Note that \eqref{eq-discreta-ODE} is just an explicit Euler scheme associated to the ODE $y'(t)=y(t)^p$, whose solution is given by
\begin{align*}
 y(t)= C_p (T^{\textup{b}}-t)^{-\frac{1}{p-1}}, \quad \textup{where} \quad T^{\textup{b}}= (Y^0)^{1-p}/(p-1) \quad \textup{and} \quad C_p=(p-1)^{-\frac{1}{p-1}},
\end{align*}
for all $t\in[0,T^{\textup{b}})$, which blows up in finite time $T^{\textup{b}}$.

At this point, we realize that, for an specific choice of $\tau_j$, problem \eqref{eq-discreta-ODE} can be solved explicitly. More precisely, let us  fix $\tau>0$ and choose $\tau_j\coloneqq \tau Y^{1-p}(t_j)$.
Then, the solution of \eqref{eq-discreta-ODE} is given by
\begin{align}\label{sol-disc-ODE}
   Y(t_j)=(1+\tau)^{j}Y^0.
\end{align}
We can also compute explicitly the time $t_j$ for $j>0$:
\[
t_j=\sum_{k=0}^{j-1}\tau_k = \sum_{k=0}^{j-1}\tau Y^{1-p}(t_k) = \tau (Y^0)^{1-p} \sum_{k=0}^{j-1} ((1+\tau)^{{1-p}})^{k} =  \tau (Y^0)^{1-p}  \frac{1-((1+\tau)^{{1-p}})^{j}}{1-((1+\tau)^{{1-p}})}.
\]
In particular, this shows that \eqref{sol-disc-ODE} also blows up in finite time
$
T^{\textup{b}}_\tau= \lim_{j\to+\infty} t_j=\frac{\tau (Y^0)^{1-p}}{1-(1+\tau)^{{1-p}}}.
$
We observe that, $T^{\textup{b}}_\tau \to T^{\textup{b}}$ as $\tau\to0$:
\[
\lim_{\tau\to0}T^{\textup{b}}_\tau=(Y^0)^{1-p} \lim_{\tau\to0} \frac{\tau }{1-(1+\tau)^{{1-p}}} = (Y^0)^{1-p}\lim_{\tau\to0} \frac{1}{(p-1)(1+\tau)^{{-p}}}=   \frac{(Y^0)^{1-p}}{p-1}= T^{\textup{b}}.
\]
Actually, we can also show convergence of $Y$ to $y$ arbitrarily close to the blow-up time. More precisely, let us write, by direct computations
\[
Y(t_j)=C_{p,\tau} (T_\tau^{\textup{b}}-t_j)^{-\frac{1}{p-1}} \quad \textup{where} \quad T^{\textup{b}}_\tau= \frac{\tau (Y^0)^{1-p}}{1-(1+\tau)^{{1-p}}} \quad \textup{and} \quad C_{p,\tau}=\left(\frac{1-(1+\tau)^{1-p}}{\tau}\right)^{-\frac{1}{p-1}}.
\]
Then, for any $T_0< T^{\textup{b}}$, we have
\begin{align*}
\sup_{t_j\leq T_0}|y(t_j)-Y(t_j)| &\leq |C_p-C_{p,\tau}| \sup_{t_j\leq T_0}|T^\textup{b}-t_j|^{-\frac{1}{p-1}}+ C_{p,\tau} \sup_{t_j\leq T_0}\left|(T^{\textup{b}}-t_j)^{-\frac{1}{p-1}}-(T^{\textup{b}}_\tau-t_j)^{-\frac{1}{p-1}}\right|\\
&\lesssim |C_p-C_{p,\tau}| + |T^b-T^{\textup{b}}_{\tau}|,
\end{align*}
 which converges to $0$ as $\tau \to 0^+$.

\section*{Acknowledgements}
FdT was supported by the Spanish Government through RYC2020-029589-I, PID2021-
127105NB-I00 and CEX2019-000904-S funded by the MICIN/AEI. RF was supported by the Spanish project   PID2023-146931NB-I00 funded by the MICIN/AEI and by Grupo de Investigaci\'on UCM 920894.

\bibliographystyle{abbrv}


\end{document}